\newcommand\redout{\bgroup\markoverwith
{\textcolor{red}{\rule[.5ex]{2pt}{0.4pt}}}\ULon}
\title[Plt threefolds with non-normal centres]
{Purely log terminal threefolds with non-normal centres in characteristic two} 
\author{Paolo Cascini and Hiromu Tanaka} 
\subjclass[2010]{14E30, 14B05}
\keywords{minimal model program, purely log terminal, extension theorem, positive characteristic}
\address{Department of Mathematics, Imperial College, London, 180 Queen's Gate, 
London SW7 2AZ, UK} 
\email{p.cascini@imperial.ac.uk}
\address{Graduate School of Mathematical Sciences, 
The University of Tokyo, 
3-8-1 Komaba, Meguro-ku, Tokyo 153-8914, JAPAN} 
\email{tanaka@ms.u-tokyo.ac.jp}
\thanks{Both of the authors were funded by EPSRC}
\newcommand{\Sing}[0]{{\operatorname{Sing}}}
\newcommand{\red}[0]{{\operatorname{red}}}
\newcommand{\Coker}[0]{{\operatorname{Coker}}}
\newcommand{\Ker}[0]{{\operatorname{Ker}}}
\newcommand{\Proj}[0]{{\operatorname{Proj}}}
\newcommand{\Spec}[0]{{\operatorname{Spec}}}
\newcommand{\Supp}[0]{{\operatorname{Supp}}}
\newcommand{\Pic}[0]{{\operatorname{Pic}}}
\newcommand{\Ex}[0]{{\operatorname{Ex}}}
\newtheorem{thm}{Theorem}[section]
\newtheorem{lem}[thm]{Lemma}
\newtheorem{cor}[thm]{Corollary}
\newtheorem{prop}[thm]{Proposition}
\theoremstyle{definition}
\newtheorem{dfn}[thm]{Definition}
\newtheorem{rem}[thm]{Remark}
\newtheorem*{claim}{Claim}
\newtheorem{nota}[thm]{Notation}         
\newtheorem{nothing}[thm]{}
\newtheorem{assumption}[thm]{Assumption}
\newcommand{\MO}{\mathcal{O}}
\newcommand{\R}{\mathbb{R}}
\newcommand{\Q}{\mathbb{Q}}
\newcommand{\Z}{\mathbb{Z}}
\begin{document}

\maketitle

\begin{abstract}
We show that many classical results of the minimal model program  
do not hold over an algebraically closed field of characteristic two. Indeed, 
we construct a three dimensional plt pair
whose codimension one part is not normal, 
a three dimensional klt singularity which is not rational nor Cohen-Macaulay, 
and a klt Fano threefold  with non-trivial intermediate cohomology. 
\end{abstract}

\tableofcontents
\setcounter{section}{0}

\section{Introduction}
 Many important results  in birational geometry  rely on the study of singularities that appear in the minimal model program. In particular, thanks to the work of Shokurov, purely log terminal (or plt for short) pairs have played a crucial role in the proof of the existence of flips (e.g. see \cite{shokurov03,hm10,bchm10,hx13}). 
A basic but important property for plt pairs $(X, \Delta)$ in characteristic zero 
is that the coefficient one part $\llcorner \Delta\lrcorner$ is normal \cite[Proposition 5.51]{km98}. 
Its proof heavily depends on  Kawamata--Viehweg vanishing theorem, 
which fails in positive characteristic. 
Thus, it is natural to ask whether the same property holds in positive characteristic. 
For example, if $X$ is either a surface or a threefold in characteristic $p>5$, 
then the question holds  true \cite[Theorem 3.1 and Proposition 4.1]{hx13}. 

The purpose of this paper is to give a negative answer in characteristic two. 

\begin{thm}\label{intro-plt}
Let $k$ be an algebraically closed field in characteristic two. 
Then there exists a $\Q$-factorial three dimensional plt pair $(Z, E)$ over $k$ 
such that $E$ is a prime divisor which is not normal. 
\end{thm}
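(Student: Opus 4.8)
The plan is to exhibit an explicit example rather than to argue abstractly, since any abstract argument would have to circumvent precisely the vanishing theorem whose failure in characteristic two we are exploiting. The strategy splits into three parts: first build a non-normal surface $E$ whose non-normality is concentrated along a curve $C\subset E$ and is forced by a wild characteristic-two phenomenon; then realize $E$ as a prime divisor on a suitable threefold $Z$; and finally verify $\Q$-factoriality of $Z$ together with the plt property of $(Z,E)$ by means of an explicit resolution and discrepancy computation. For the surface I would take the normalization $\nu\colon \overline{E}\to E$ to be a smooth rational surface and produce $E$ by pinching $\overline{E}$ along a curve $\overline{C}\subset\overline{E}$ through a purely inseparable identification that is available only when $p=2$ (for instance by gluing $\overline{C}$ to itself along a Frobenius-type factorization of $\overline{C}\to C$), so that the conductor of $\nu$ is supported on $\overline{C}$ and $C$ is exactly the locus where $E$ fails to be normal.

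To embed $E$ in a threefold I would present $Z$ concretely, for example as a quasi-homogeneous hypersurface, as the total space of a rank-one reflexive sheaf over $E$, or as a cyclic or inseparable cover of a smooth threefold, arranged so that $E$ appears as the zero locus of a single coordinate and is $\Q$-Cartier. The $\Q$-factoriality of $Z$ would then be checked either by arranging that $Z$ has only isolated or quotient singularities, whose local divisor class groups are finite, or by a direct computation of $\operatorname{Cl}(Z)$. For the plt verification I would choose a log resolution $\mu\colon Z'\to Z$ and compute $K_{Z'}+\mu_*^{-1}E=\mu^*(K_Z+E)+\sum_i a_i F_i$, checking that $a_i>-1$ for every $\mu$-exceptional divisor $F_i$ and that no log canonical place other than $E$ itself occurs. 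Away from the conductor curve $C$ the usual adjunction and a local toric or weighted-blow-up computation should show that $(Z,E)$ behaves exactly as in the classical picture, so the only subtle region is a neighbourhood of $C$.

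The main obstacle is precisely this local analysis along $C$, where $E$ is non-normal. One must show that the discrepancies of the divisors extracted by $\mu$ over $C$ are strictly greater than $-1$: in characteristic zero the argument of \cite[Proposition 5.51]{km98}, via Kawamata--Viehweg vanishing, would force one such place to have discrepancy $-1$, and the resulting connectedness of the non-klt locus would make $E$ normal along $C$. In characteristic two, by contrast, the wild ramification of the identification defining $E$ should shift these discrepancies strictly above $-1$, so that $E$ can remain non-normal while $(Z,E)$ stays plt. Carrying out this discrepancy computation in a neighbourhood of $C$ — determining the extracted divisors explicitly and confirming that $E$ is the unique log canonical place — is the crux of the construction, and it is exactly here that the characteristic-two hypothesis is indispensable.
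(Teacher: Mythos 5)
Your plan contains a structural flaw that is fatal before any of the deferred computations begin: you arrange for $E$ to be non-normal along a curve $C$, with the conductor of the normalisation $\nu\colon \overline E\to E$ supported on the curve $\overline C$, so that $E$ fails to be normal in codimension one. But the reduced boundary of a plt pair can never fail normality in codimension one, in \emph{any} characteristic. Localise $Z$ at the generic point $\eta$ of $C$: one obtains a two-dimensional excellent plt germ $(\Spec \MO_{Z,\eta},\, E_\eta)$, and for excellent surface plt germs the reduced boundary is regular at the closed point --- this is the characteristic-free chain classification of plt surface germs via the minimal resolution, the negativity lemma and adjunction (cf.\ \cite[Section 3.3]{kollar13}; note that the birational Kawamata--Viehweg vanishing on excellent surfaces \cite[Theorem 10.4]{kollar13}, which the paper itself invokes, holds in every characteristic). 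Hence $\MO_{E,\eta}$ is a DVR at every codimension-one point of $E$: a plt centre is always $R_1$, and in characteristic two it can only fail normality at isolated points, through failure of $S_2$. Your purely inseparable pinching along a curve does produce a normalisation that is a universal homeomorphism, so the constraint of \cite[Proposition~3.11]{GNT15} is satisfied --- but that is not the binding constraint. However wild the identification, a conductor supported on a curve yields exactly the codimension-one non-normality that plt-ness forbids, so the discrepancy computation along $C$ that you postpone as ``the crux'' is not merely difficult: it provably cannot succeed.

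The paper's construction conforms to this constraint and locates the non-normality where it must be. There, $Z$ is a cone over a Keel--M\textsuperscript cKernan klt del Pezzo surface $T$, built from a Seifert-type $\mathbb P^1$-fibration $\pi_Y\colon Y\to T$ with negative section $T^-$ contracted by $g\colon Y\to Z$, and $E=E^Z_{q+2}$ fails normality only at the vertex $g(T^-)$, i.e.\ $E$ is $R_1$ but not $S_2$. Crucially, the non-normality is not imposed by hand via a conductor; it is detected cohomologically: one proves $R^1g_*\MO_Y=0$ while $R^1g_*\MO_Y(-E^Y_{q+2})\neq 0$, the latter fed by $H^1(T,\MO_T(A-E^T_{q+2}))\neq 0$ --- a genuine failure of Kawamata--Viehweg vanishing on $T$ in characteristic two --- so that $g_*\MO_Y\to g_*\MO_{E^Y_{q+2}}$ is not surjective. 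Plt-ness of $(Z,E^Z_{q+2})$ is then verified by explicit pullback/discrepancy formulas on a concrete resolution together with inversion of adjunction via $F$-regularity \cite{Das15}, and $\Q$-factoriality by a Keel-type semi-ampleness criterion. Your outline, by contrast, never exhibits a concrete $Z$, and each decisive step (``should show'', ``should shift the discrepancies above $-1$'') is left open; to repair it you would have to relocate the non-normality to a point (an $S_2$ failure) and supply a specific vanishing-failure input, which is precisely what the paper's cone construction does.
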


The main idea of Theorem \ref{intro-plt} is to apply a cone-like construction 
to some klt del Pezzo surfaces which violate Kawamata--Viehweg vanishing 
(cf. Subsection \ref{ss-sketch}). 

The normality of the codimension one part of a plt pair 
is closely related to the extension problem of log pluri-canonical sections. 
Using the same construction as in the proof of Theorem~\ref{intro-plt}, 
we  produce a plt pair such that 
the restriction maps for the log pluri-canonical divisors are not surjective, contrary to what happens in characteristic zero
(cf. \cite[Theorem~5.4.21]{hm07}, \cite[Corollary~1.8]{dhp13}):

\begin{thm}\label{intro-extension}
Let $k$ be an algebraically closed field of characteristic two. 
Then there exist a three dimensional affine variety $Z$ over $k$ and 
a projective birational morphism $g\colon Y \to Z$ 
which satisfies the following properties:  
\begin{enumerate}
\item $(Y, E+B)$ is a plt pair such that $\llcorner E+B \lrcorner=E$, 
\item $K_{Y}+E+B$ is semi-ample, and 
\item there exists a positive integer $m_0$ such that the restriction map 
$$H^0(Y, \MO_{Y}(mm_0(K_{Y}+E+B))) \to 
H^0(E, \MO_{Y}(mm_0(K_Y+E+B))|_{E})$$
is not surjective for any positive integer $m$.  
\end{enumerate}
\end{thm}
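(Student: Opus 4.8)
The plan is to reuse the geometry behind Theorem~\ref{intro-plt}. Start from the $\Q$-factorial plt pair $(Z,E)$ whose prime divisor $E$ is non-normal, arising from a cone-like construction over a klt del Pezzo surface violating Kawamata--Viehweg vanishing. The failure of surjectivity in part~(3) should be the cohomological shadow of the non-normality of $E$: if the restriction maps were surjective for all $m$, one could hope to recover normality of the divisor along the lines of the characteristic-zero argument in \cite[Proposition 5.51]{km98}, so the two phenomena are really two faces of the same vanishing failure.

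Concretely, I would first realize $Z$ as an affine cone and take $g\colon Y\to Z$ to be a suitable projective birational (partial resolution or small) model extracting $E$ as the exceptional-type boundary, arranging coefficients so that $(Y,E+B)$ is plt with $\llcorner E+B\lrcorner=E$, which gives~(1). For~(2), I would choose $B$ and the model $Y$ so that $K_Y+E+B$ is the pullback under $g$ of a relatively semi-ample (indeed $g$-trivial) divisor, making $K_Y+E+B$ semi-ample; running the cone construction over a del Pezzo surface makes this transparent since the relevant divisor is a nonnegative combination of the cone's natural polarization. Fixing $m_0$ to clear denominators so that $m_0(K_Y+E+B)$ is Cartier, the sections $H^0(Y,\MO_Y(mm_0(K_Y+E+B)))$ are controlled by the graded ring of the cone.

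The heart of~(3) is to compute both sides of the restriction map and exhibit a persistent cokernel. I would use the short exact sequence
\begin{equation*}
0 \to \MO_Y(mm_0(K_Y+E+B)-E) \to \MO_Y(mm_0(K_Y+E+B)) \to \MO_Y(mm_0(K_Y+E+B))|_E \to 0,
\end{equation*}
so that surjectivity of the restriction map is governed by $H^1(Y,\MO_Y(mm_0(K_Y+E+B)-E))$. The task is to show this $H^1$ is nonzero for every $m$, and here the del Pezzo surface $S$ underlying the construction enters: the obstruction group should be identified, via the cone structure and Leray/pushforward along $g$, with a cohomology group on $S$ built from the very line bundle for which Kawamata--Viehweg vanishing fails in characteristic two. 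Because that failure is robust under the twisting by multiples of an ample class coming from the cone grading, the nonvanishing persists for all $m$, yielding non-surjectivity for every $m$.

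The main obstacle will be pinning down the identification of the obstruction cohomology on $Y$ with the explicit nonvanishing cohomology on the del Pezzo surface $S$, uniformly in $m$: one must track how the grading shift by $mm_0$ interacts with the restriction to $E$ and the non-normality of $E$, and verify that the relevant higher direct images $R^ig_*$ and the cokernel on $E$ do not accidentally vanish or cancel for special $m$. In characteristic zero the corresponding $H^1$ would vanish by Kawamata--Viehweg, so the entire argument hinges on exhibiting a single explicit class surviving in every twist; I expect to produce it directly from the Frobenius-pathological section on $S$ and propagate it up the cone, checking by a local computation near the non-normal locus of $E$ that it cannot lie in the image of the restriction.
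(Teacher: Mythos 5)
Your outline does follow the paper's strategy in broad strokes---the same cone construction, with $E=E^Y_{q+2}$, $B=\frac{q-2}{q-1}T^-$ supported on the $g$-exceptional divisor (note $g$ contracts the divisor $T^-$ to a point; it is not small, and $E$ is a strict transform rather than an extracted divisor), so that $K_Y+E+B=g^*(K_Z+E^Z_{q+2})$ as in Lemma~\ref{l-Z-plt}, and non-surjectivity detected through the ideal-sheaf sequence of $E$---but as written it has two genuine gaps. First, your criterion for (3) is logically incomplete: nonvanishing of $H^1(Y,\MO_Y(mm_0(K_Y+E+B)-E))$ does not imply failure of surjectivity, since the nonzero classes could map injectively into $H^1(Y,\MO_Y(mm_0(K_Y+E+B)))$ rather than lie in the image of the connecting map. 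You need a complementary vanishing, and this pair of statements is exactly what the paper proves at the level of higher direct images: $R^1g_*\MO_Y(-E^Y_{q+2})\neq 0$ \emph{and} $R^1g_*\MO_Y=0$ (proof of Theorem~\ref{t-plt-main}). Both are obtained by descending induction along the exact sequences of Propositions~\ref{p-Y-cokernel} and~\ref{p-Y-cokernel2}, using Serre vanishing for $n\gg 0$ together with the three inputs of Lemma~\ref{l-plt-compute}: $H^1(T,\MO_T(nA))=0$ for all $n\geq 0$, $H^1(T,\MO_T(A-E^T_{q+2}))\neq 0$, and $H^2(T,\MO_T(nA-E^T_{q+2}))=0$ for $n\geq 0$. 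The $H^2$-vanishing is precisely what rules out the ``accidental cancellation'' you flag as an obstacle; your proposal defers all three computations, which are the substance of the argument (and the identification lands on the klt surface $T$, with $S$ entering only through $\llcorner\psi^*(\cdot)\lrcorner$).

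Second, the $m$-uniformity problem you anticipate---tracking how the grading shift by $mm_0$ and a twist by an ample class interact with the restriction---does not arise, and the idea that dissolves it is absent from your proposal: choose $m_0$ with $m_0(K_Z+E^Z_{q+2})$ Cartier and replace $Z$ by an affine open $Z'$ containing $g(T^-)$ on which $\MO_Z(m_0(K_Z+E^Z_{q+2}))$ is trivial. Since $K_Y+E+B$ is a pullback, $\MO_{Y'}(mm_0(K_{Y'}+E'+B'))\simeq\MO_{Y'}$ for \emph{every} $m\geq 1$, so the restriction map in (3) is literally the same map $H^0(Y',\MO_{Y'})\to H^0(E',\MO_{E'})$ for all $m$; there is no ample twist ``from the cone grading'' whose robustness under Frobenius-type pathologies needs checking. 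Its non-surjectivity is then exactly the non-normality mechanism of Theorem~\ref{t-plt-main}: over the affine $Z'$ the cokernel of $g_*\MO_Y\to g_*i_*\MO_{E^Y_{q+2}}$ has nonzero global sections, and (2) becomes immediate because the divisor is $\Q$-linearly trivial on $Y'$. So your plan is repairable, but the two missing ingredients---the vanishing $R^1g_*\MO_Y=0$ alongside the nonvanishing, and the trivialization over an affine chart that makes the statement $m$-independent---are precisely where the proof lives.
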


Finally, using similar methods, 
we construct three dimensional Kawamata log terminal (or klt for short) singularities 
which are not rational nor Cohen--Macaulay (see \cite[Theorem 5.22 and Corollary 5.25]{km98} for the corresponding result in characteristic zero) and a
three dimensional klt Fano variety $X$ with $H^2(X,\MO_X) \neq 0$:

\begin{thm}\label{intro-non-rational}
Let $k$ be an algebraically closed field of characteristic two. 
Then there exists a three dimensional klt variety $Z$ over $k$ such that
\begin{enumerate}
\item $Z$ is not Cohen--Macaulay, and 
\item there is  a projective birational morphism 
$h\colon W \to Z$ from a smooth threefold $W$ such that $R^1h_*\MO_W \neq 0$.
\end{enumerate}
\end{thm}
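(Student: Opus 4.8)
The plan is to construct $Z$ by the same cone-like construction used in Theorems \ref{intro-plt} and \ref{intro-extension}, starting from a klt del Pezzo surface $S$ in characteristic two that violates Kawamata--Viehweg vanishing. Concretely, I would pick $S$ together with an ample divisor $A$ on $S$ for which $H^1(S, \MO_S(-A)) \neq 0$, take the affine cone $Z = \Spec \bigoplus_{m \geq 0} H^0(S, \MO_S(mA))$ over $S$ with respect to $A$, and let $h\colon W \to Z$ be the natural resolution obtained by blowing up the cone point; the exceptional divisor of the blow-up $\pi\colon V \to Z$ (before resolving $V$) is a copy of $S$, and $W$ is a resolution of $V$. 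The cone-point vertex is the unique non-smooth point, so the whole argument is local at the vertex.

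The key steps, in order, would be as follows. First I would establish that $Z$ is klt: this follows from the discrepancy computation for cones, which relates the discrepancy of the exceptional $S$ over the vertex to the coefficient data of $K_S + (\text{something}) \sim_{\Q} -\lambda A$ and the positivity of $-K_S$ on the del Pezzo $S$, so that choosing $A$ suitably (e.g. $A = -nK_S$ for appropriate $n$) forces the single exceptional discrepancy to lie in $(-1,0)$. Second, for the failure of Cohen--Macaulayness, I would use the standard cohomological criterion for cones: the local ring of $Z$ at the vertex is Cohen--Macaulay if and only if $H^i(S, \MO_S(mA)) = 0$ for all $0 < i < \dim Z - 1 = 2$ and all $m \in \Z$. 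Since $\dim Z = 3$, the only relevant degree is $i = 1$, and the nonvanishing $H^1(S, \MO_S(-A)) \neq 0$ coming from the violation of Kawamata--Viehweg vanishing gives exactly the obstruction, so $Z$ is not Cohen--Macaulay. Third, for $R^1h_*\MO_W \neq 0$, I would compute the higher direct image via the Leray/formal-functions machinery: the stalk of $R^1h_*\MO_W$ at the vertex is controlled by the cohomology $\bigoplus_{m} H^1(S, \MO_S(mA))$ of twists of the structure sheaf on the exceptional locus, and the same nonvanishing $H^1(S, \MO_S(-A)) \neq 0$ ensures this direct image is nonzero; equivalently, $Z$ has non-rational singularities precisely because $S$ violates the vanishing.

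Since the smooth threefold $W$ is required explicitly, I would factor $h$ through the blow-up of the vertex $\pi\colon V \to Z$ with exceptional divisor $S$, then take a resolution $W \to V$ that is an isomorphism over $V \setminus S$ (using that $S$ is a del Pezzo surface with at worst klt, hence rational, singularities so that resolving $S$ does not introduce new higher direct images on the surface), and finally compare $R^1 h_* \MO_W$ with $R^1 \pi_* \MO_V$ via the Leray spectral sequence for $W \to V \to Z$. The comparison reduces to checking that resolving the surface singularities of $S$ contributes nothing to $R^1$, which holds because klt surface singularities are rational.

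The main obstacle I expect is the precise verification that a single del Pezzo surface $S$ can simultaneously carry all the required structure: it must be klt (to make $Z$ klt), it must genuinely violate Kawamata--Viehweg vanishing in the sharp form $H^1(S, \MO_S(-A)) \neq 0$ for an ample (not merely nef) $A$, and the discrepancy bookkeeping for the cone must land in the klt range. Producing such an explicit $S$ in characteristic two — rather than merely invoking its existence — is the technical heart, and I anticipate reusing the concrete del Pezzo surfaces already built for Theorem \ref{intro-plt} (cf. the sketch in Subsection \ref{ss-sketch}), together with a careful choice of the polarization $A$ so that the cone inherits exactly the klt-but-non-rational behaviour.
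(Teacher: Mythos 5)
Your overall strategy (a cone over a Keel--M\textsuperscript cKernan del Pezzo that violates vanishing) is indeed the paper's strategy, but there is a genuine gap in your cohomological bookkeeping, and your suggested polarization provably cannot work. For a cone $Z$ over $T$ with ample polarization $A$, the two pathologies are governed by \emph{different} ranges of twists: the stalk of $R^1h_*\MO_W$ at the vertex is computed (via formal functions, or via the paper's exact sequences in Proposition~\ref{p-Y-cokernel}) by the \emph{nonnegative} twists $H^1(T,\MO_T(mA))$, $m\geq 0$, while the negative twists $H^1(T,\MO_T(mA))$, $m<0$, contribute only to $H^2_{\mathfrak m}(\MO_Z)$, i.e.\ to the failure of Cohen--Macaulayness. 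So your claim that the single nonvanishing $H^1(T,\MO_T(-A))\neq 0$ yields $R^1h_*\MO_W\neq 0$ is false: since $H^1(T,\MO_T)=0$ ($T$ is rational), non-rationality forces $H^1(T,\MO_T(mA))\neq 0$ for some $m\geq 1$ --- a failure of vanishing for an \emph{ample} divisor itself, which is a strictly stronger requirement. Worse, your proposed choice $A=-nK_T$ kills everything: by Lemma~\ref{l-CM} and Proposition~\ref{p-eff-nef-big} (using $-K_T\sim 2E_i^T$ effective nef and big), one gets $H^1(T,\MO_T(mA))=0$ for \emph{all} $m\in\Z$, so the anticanonical cone over $T$ is Cohen--Macaulay with $R^1h_*\MO_W=0$. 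This is exactly why the paper must take the non-Cartier divisor $A=\sum_{i=1}^{3q}E^T_i-\sum_{j=3q+1}^{4q}E^T_j$ (Notation~\ref{n-bad-Fano}), for which Theorem~\ref{t-KM-h1}(5) gives $h^1(T,\MO_T(A))=q-1>0$, and then derives $R^1g_*\MO_Y\simeq H^1(T,\MO_T(A))\neq 0$ from the sequences of Proposition~\ref{p-Y-cokernel} together with Serre vanishing.

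Two further points. First, in characteristic two you cannot pass between the two conclusions for free (Grauert--Riemenschneider-type vanishing is unavailable), so non-CM needs its own argument; the paper gets it by showing $H^2(Z,\MO_Z)\simeq H^0(Z,R^1g_*\MO_Y)\neq 0$ while $H^1(Z,\omega_Z)=0$, which contradicts Serre duality \cite[Theorem 5.71]{km98} if $Z$ were Cohen--Macaulay. Your local-cohomology route to non-CM via negative twists is legitimate in principle (and in fact the paper's $A$ does satisfy $h^1(T,\MO_T(-A))=q\neq 0$ by Theorem~\ref{t-KM-h1}(6)), but you never exhibit such a divisor, and your candidate $A=-nK_T$ fails here too. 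Second, because every workable $A$ is non-Cartier, your picture of the blow-up of the vertex as a line-bundle total space with exceptional divisor a copy of $T$ is inaccurate: the birational model is a Seifert-type bundle with singularities along curves over $\Supp\{\psi^*A\}$, and making $Z$ klt and comparing $R^1$ with a genuine smooth resolution $W$ requires the explicit resolution $\widetilde X$ and the characteristic-$p$ relative Kawamata--Viehweg machinery (Proposition~\ref{p-birat-kvv}, Lemmas~\ref{l-tildeX-kvv}, \ref{l-f-kvv}, \ref{l-Y-rational}); ``klt surface singularities are rational'' does not by itself justify these vanishing statements in characteristic two.
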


\begin{thm}\label{intro-bad-Fano}
Let $k$ be an algebraically closed field of characteristic two. 
Let $r$ be a positive integer. 
Then there exists 
a three dimensional projective $\Q$-factorial klt variety $Z$ over $k$ 
such that $-K_Z$ is ample, $\rho(Z)=1$ and  $\dim_k H^2(Z, \MO_Z)=r$. 
\end{thm}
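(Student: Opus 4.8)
The plan is to build $Z$ as a cone-like variety over a suitable base surface, engineered so that a failure of Kawamata--Viehweg vanishing on that surface produces nontrivial cohomology upstairs. Concretely, I would first locate, for the given $r$, a klt del Pezzo surface $S$ over $k$ (in characteristic two) carrying an ample or semi-ample $\Q$-divisor $L$ for which the relevant cohomology group has dimension exactly $r$. Since the earlier theorems in this paper are proved by exactly this cone construction applied to klt del Pezzos violating Kawamata--Viehweg vanishing (as noted in the sketch referenced after Theorem~\ref{intro-plt}), the natural strategy is to reuse the same surfaces but to tune a parameter so that the pathological cohomology has the prescribed dimension $r$. I expect these surfaces to arise as quotients or as explicit hypersurfaces whose Picard number is one, so that the cone over them with respect to $L$ inherits $\rho(Z)=1$.

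Next I would form the projective cone $Z = \Proj$ of the section ring $\bigoplus_{m\ge 0} H^0(S, \MO_S(mL))$, or more precisely a weighted/orbifold version adapted to the fractional coefficients, and verify the three required properties in turn. For $\Q$-factoriality and $\rho(Z)=1$, I would use that $S$ has Picard number one together with the standard description of divisor class groups of cones: the cone over a surface with $\rho(S)=1$ has a one-dimensional rational Picard group, and $\Q$-factoriality follows once $L$ is chosen to be an ample $\Q$-Cartier generator. For the klt property of $Z$ and the ampleness of $-K_Z$, I would run the usual cone discrepancy computation: $Z$ is klt and Fano precisely when $S$ is klt del Pezzo and $L$ is proportional to an appropriate multiple of $-K_S$, so the polarization $L$ must be selected inside the cone spanned by $-K_S$ to guarantee $-K_Z$ ample while keeping the discrepancy at the vertex strictly greater than $-1$.

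The heart of the argument is property $\dim_k H^2(Z, \MO_Z) = r$. Here I would compute the cohomology of the structure sheaf of the cone via the standard spectral sequence / direct-sum decomposition that expresses $H^i(Z, \MO_Z)$ in terms of $\bigoplus_{m} H^{i-1}(S, \MO_S(mL))$ together with the top cohomology at the vertex. Because $S$ is a surface, the only intermediate cohomology of twists $\MO_S(mL)$ lives in $H^1(S, \MO_S(mL))$, and this is exactly where Kawamata--Viehweg vanishing fails in characteristic two; so the dimension $r$ must be realized as $\dim_k H^1(S, \MO_S(m_0 L))$ for some critical twist $m_0$, or as a finite sum of such contributions. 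The main obstacle, and the step requiring the most care, will be producing a family of klt del Pezzo surfaces (or a single surface with an adjustable line bundle) whose $H^1$ of the relevant twist has dimension exactly the prescribed $r$ rather than merely being nonzero: this is a genuinely quantitative strengthening of the qualitative vanishing failure used for Theorems~\ref{intro-non-rational}, and I would expect to achieve it either by taking an $r$-fold fibre-product or iterated cover construction, or by exhibiting an explicit surface whose Frobenius-pushforward cohomology can be computed dimension by dimension. Once the base surface is pinned down, transferring the count $r$ through the cone cohomology decomposition and checking that no other twists contribute to $H^2(Z,\MO_Z)$ is a bookkeeping task that the vanishing of $H^1(S,\MO_S(mL))$ for $m\neq m_0$ (which does hold for $m$ sufficiently positive or negative) should settle.
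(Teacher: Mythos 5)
Your overall architecture is the same as the paper's: a cone over a rank-one klt del Pezzo surface in characteristic two, with $H^2(Z,\MO_Z)$ extracted from the failure of Kawamata--Viehweg vanishing on the base via the twist-by-twist filtration (in the paper this is the sequence $0 \to \MO_Y(-(n+1)T^-) \to \MO_Y(-nT^-) \to j'_*\MO_T(nA) \to 0$ of Proposition~\ref{p-Y-cokernel}, combined with $H^i(Y,\MO_Y)=0$ and Serre vanishing, so that only the twist $n=1$ contributes), and with $\Q$-factoriality, $\rho(Z)=1$ and ampleness of $-K_Z$ handled essentially as you indicate (Lemmas~\ref{l-Qfac-criterion}, \ref{l-Z-Qfac}, \ref{l-Z-plt}). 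But the step you explicitly defer --- producing a klt del Pezzo $T$ with $\rho(T)=1$ and an ample $\Q$-divisor $A$ with $\dim_k H^1(T,\MO_T(A))$ \emph{exactly} $r$, together with $H^1(T,\MO_T(nA))=0$ for $n=0$ and $n\ge 2$ --- is the actual mathematical content of the theorem, and the mechanisms you suggest for it do not work as stated: an $r$-fold fibre product of surfaces is not a surface (so it cannot serve as the base of this cone construction), and no iterated-cover or Frobenius-pushforward computation yielding a prescribed $h^1$ on a rank-one klt del Pezzo in characteristic two is exhibited. Without this quantitative input the argument establishes only $H^2(Z,\MO_Z)\neq 0$, not $\dim_k H^2(Z,\MO_Z)=r$.

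The paper closes precisely this gap using the Keel--M\textsuperscript{c}Kernan surfaces with a tunable number of blown-up points: in Notation~\ref{n-bad-Fano} one takes $d=4q+2$ and $A=\sum_{i=1}^{3q}E^T_i-\sum_{j=3q+1}^{4q}E^T_j$, checks Assumption~\ref{a-A}, and then the count is an Euler-characteristic computation rather than a covering trick: Theorem~\ref{t-KM-h1} computes $\chi(T,\MO_T(A))$ by Riemann--Roch on the resolution and shows $h^0=h^2=0$, giving $h^1(T,\MO_T(A))=q_2-1=q-1$ in the case $q_1=3q$, $q_2=q$ (Lemma~\ref{l-h1-bad-Fano}(1)); the vanishing $H^1(T,\MO_T(nA))=0$ for $n\geq 2$ follows because $2E^T_i\sim 2E^T_j$ makes $nA$ linearly equivalent to an effective ample divisor, so Proposition~\ref{p-eff-nef-big} applies (Lemma~\ref{l-h1-bad-Fano}(2)). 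Taking $q=r+1$ then yields $\dim_k H^2(Z,\MO_Z)=r$ in Theorem~\ref{t-bad-Fano}. Note also that since $A$ is not Cartier, the naive $\Proj$ of the section ring is delicate to analyse directly; the paper instead works through $X=\Proj_S\,\mathcal A$ over the minimal resolution $S$, with an explicit resolution $\widetilde X$, to verify the klt, $\Q$-factorial and vanishing statements --- a complication your sketch acknowledges only in passing. So the adjustable parameter you hoped for does exist, but it is the number $d$ of blown-up points together with the coefficient pattern of $A$, and supplying it is indispensable for the proof.
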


\medskip

\subsection{Sketch of the proof}\label{ss-sketch}
We  briefly overview some of the  ideas used  in the proof of Theorem~\ref{intro-plt}. 
To  this end, we first describe a method to obtain a plt pair whose codimension one part is not normal.  
Assume that 
there exist a smooth Fano variety $T$ with $\rho(T)=1$, an ample divisor $A$ and 
a normal prime divisor $E$ on $T$ such that 
\begin{enumerate}
\item $(T, E)$ is plt and $-(K_T+E)$ is ample, 
\item $H^1(T, \MO_T(nA))=0$ for any $n\geq 0$, 
\item $H^1(T, \MO_T(A-E))\neq 0$, and  
\item $H^2(T, \MO_T(mA-E))=0$ for any $m \geq 2$. 
\end{enumerate}
Note, in particular, that Kawamata--Viehweg vanishing  fails for $T$. Let $\pi_Y\colon Y:=\mathbb P_T(\MO_T \oplus \MO_T(A)) \to T$ 
and let $g\colon Y \to Z$  be the birational contraction of the section $T^-$ 
of $\pi_Y$ whose normal bundle is anti-ample. Thus, $Z$ is a cone over $T$. 
Let $E^Y:=\pi_Y^*(E)$ and $E^Z :=g_*E^Y$. 
Assuming  inversion of adjunction,  (1) implies that $(Z, E^Z)$ is plt. 
We now  show that $E^Z$ is not normal. 
Consider the exact sequence
\[
g_*\MO_Y \xrightarrow{\rho} g_*\MO_{E^Y} \to R^1g_*\MO_Y(-E^Y) \to R^1g_*\MO_Y.
\]
In order to show that $E^Z$ is not normal, 
it is enough to prove that $\rho$ is not surjective. 
Therefore, we would like to show that 
$R^1g_*\MO_Y(-E^Y) \neq 0$ and $R^1g_*\MO_Y=0$. 
Using  Serre vanishing theorem, both of these claims follow 
from the assumptions (2)--(4) (cf.  proof of Theorem~\ref{t-plt-main}). 

\medskip

\subsubsection{Construction} 
As explained above, we would like to find a Fano variety $T$
satisfying the above properties (1)--(4). 
As far as the authors know, 
only three families of klt Fano varieties are known to violate Kawamata--Viehweg vanishing \cite{LR97, schroer07, maddock16}.
However, none of them seem to be sufficient to our purpose. 

Therefore, we need to find a new Fano variety satisfying the properties (1)--(4) above. 
If we admit some singularities, then 
such examples can be constructed by taking some divisors 
on a family of klt del Pezzo surfaces in characteristic two. 
These surfaces were introduced by Keel and M\textsuperscript cKernan in \cite{km99}. 
More specifically, we can find a klt del Pezzo surface $T$, 
an ample $\Z$-divisor $A$ and a prime divisor $E$ on $T$ 
which satisfy the properties (1)--(4) 
(cf. Section~\ref{s-KM-surface} and Lemma~\ref{l-plt-compute}). 
Unfortunately, our divisor $A$ is not Cartier, 
which makes our construction more complicated. 
Indeed, we introduce a new cone-like construction to apply the same idea as above. 

The construction is as follows. Let $\psi\colon S \to T$ be the minimal resolution. 
 We consider the graded $\MO_S$-algebra 
$\mathcal A=\bigoplus_{m=0}^{\infty}\mathcal A_m$ 
whose graded pieces $\mathcal A_m$ are defined by 
\[\begin{aligned}
\mathcal A_0&:=\MO_S\\
\mathcal A_1&:=\MO_S \oplus \MO_S(\psi^*A)\\
\mathcal A_2&:=\MO_S \oplus \MO_S(\psi^*A) \oplus \MO_S(2\psi^*A)\\
&\dots
\end{aligned}
\]
and we equip $\mathcal A$ with the canonical multiplication structure. 
Note that the fractional part $\{\psi^*A\}$ is not zero in our case. 
Let $X:=\Proj_S(\mathcal A)$. 
By contracting some divisors on $X$, 
we get a threefold $Y$ which admits a $\mathbb P^1$-fibration $\pi_Y\colon Y\to T$. Such a fibration corresponds to the $\mathbb P^1$-bundle
$\mathbb P_T(\MO_T \oplus \MO_T(A))$ over $T$ at the beginning of Subsection~\ref{ss-sketch}, 
and it is a special case of a Seifert bundle (see \cite[Section 9.3]{kollar13} and the reference therein).

Finally, as above, we consider a birational morphism $g\colon Y\to Z$ which contracts a section $T^-$ of $\pi_Y$. 
Section~\ref{s-cone} is devoted to construct these threefolds and  check some of the properties we need. 
Note that, even though $S$ is smooth, $X$ is a singular variety. On the other hand, 
since we impose some assumptions on $\{\psi^*A\}$ (cf. Assumption~\ref{a-A}), 
we can show that $X$ and $Y$ are klt 
by constructing an explicit resolution of singularities. 
In Section \ref{s-examples}, we prove our main results.

\medskip
\textbf{Acknowledgement: } 
We would like to thank Y. Gongyo, C. D. Hacon, 
Y. Kawamata, 
J. Koll\'ar,
J. M\textsuperscript cKernan, S. Takagi, and C. Xu for many useful discussions and comments. The first author
would like to thank the National Center for Theoretical Sciences in Taipei
and Professor J.A. Chen for their generous hospitality, where some of the work for this paper was completed. We are also grateful to the referees for carefully reading the paper and for many useful comments.

\section{Preliminaries}

\subsection{Notation}

Throughout this paper, 
we work over an algebraically closed field $k$ of characteristic $p>0$. 
From Section~\ref{s-KM-surface}, we assume that $p=2$. 
We say that $X$ is a {\em variety} 
if $X$ is an integral  scheme which is separated and of finite type over $k$. 
A {\em curve} (resp. {\em surface}, resp. {\em threefold}) 
is a variety of dimension one (resp. two, resp. three). 
Given a variety $X$, $\Pic(X)$ denotes the {\em Picard group} of $X$ 
and $\Pic (X)_{\Q}:=\Pic(X) \otimes_{\Z} \Q$. 
Given an $\mathbb F_p$-scheme $X$, 
we denote by $F\colon X \to X$ the absolute Frobenius morphism. 
Given a scheme $X$, the {\em reduced part} $X_{\red}$ of $X$ 
is the reduced closed subscheme of $X$ 
such that the induced closed immersion $X_{\red} \to X$ is surjective. 

Given a proper morphism $f\colon X\to Y$ between normal varieties, we say that  two 
$\mathbb Q$-Cartier $\mathbb Q$-divisors 
$D_1,D_2$ on $X$ are {\em numerically equivalent over} $Y$, denoted $D_1\equiv_f D_2$, if their difference is numerically trivial on any fibre of $f$. 
We denote by $\rho(X/Y)$ the {\em relative Picard number} 
and we set $\rho(X):=\rho(X/\Spec\,k)$. 
If $f$ is a birational morphism, $\Ex(f)$ denotes the {\em exceptional locus} of $f$. 

We refer to \cite[Section 2.3]{km98} or \cite[Definition~2.8]{kollar13} for the classical definitions of singularities 
(e.g., {\em klt, plt, log canonical}) appearing in the minimal model program. 
Note  that we always assume that 
for any klt (resp.\ plt, log canonical) pair $(X, \Delta)$, 
the $\Q$-divisor $\Delta$ is effective. 

Given a $\Q$-divisor $D$ on a normal variety $X$ and an open subset $U$ of $X$, 
we define 
\[
\Gamma(U, \MO_X(D):=\{\varphi \in K(X)\,|\, (\text{div}(\varphi)+D)|_U \geq 0\}.
\]
It follows that $\MO_X(D)=\MO_X(\llcorner D\lrcorner)$. 

Given positive integers $m_1, \cdots, m_n$, we define 
$\mathbb P_k(m_1, \cdots, m_n):=\Proj\,k[x_1, \cdots , x_n]$, 
where $k[x_1, \cdots, x_n]$ is the graded polynomial ring such that 
 $x_i$ is a homogeneous element of degree $m_i$, for $i=1,\dots,n$. 
Note  that $\mathbb P_k(1, m)$ is isomorphic to $\mathbb P^1_k$ 
for any positive integer $m$. 

\subsection{$\Q$-factoriality}

We now describe a criterion for $\Q$-factoriality. 
Note that its proof is valid only in positive characteristic 
as it relies on \cite{keel99}. 

\begin{lem}\label{l-Qfac-criterion}
Let $f\colon X \to Y$ be a birational morphism of projective normal varieties 
such that $E:=\Ex(f)$ is a prime divisor. 
Let $g\colon E \to f(E)$ be the restriction of $f$ along $E$. 
Assume that 
\begin{enumerate}
\item[(a)] $X$ is $\Q$-factorial,
\item[(b)] $\rho(E/f(E))=1$, and
\item[(c)] any $g$-numerically trivial Cartier divisor $M$ on $E$ is $g$-semi-ample. 
\end{enumerate}
Then the following hold:
\begin{enumerate}
\item For any curve $\zeta$ on $X$ such that $f(\zeta)$ is a point, 
the sequence 
\[
0 \to \Pic (Y)_{\Q} \xrightarrow{f^*} \Pic(X)_{\Q} \xrightarrow{\cdot \zeta} \Q \to 0
\]
is exact. 
\item $\rho(Y)=\rho(X)-1$.
\item $Y$ is $\Q$-factorial. 
\end{enumerate}
\end{lem}

\begin{proof}
We first show (1). 
Clearly $f^*$ is injective, 
the composite map $(\cdot \zeta) \circ f^*$ is zero, and  
the map  $\cdot \zeta$ is surjective. 
Let $N$ be a Cartier divisor on $X$ such that $N \cdot \zeta=0$. 
It follows from (b) that $N|_E \equiv_g 0$. 
Therefore, we get $N\equiv_f 0$. 
Let $H$ be an ample Cartier divisor on $Y$. 

\medskip 

We first prove the following 

\begin{claim}\label{c-globally-nef}
There exists a positive integer $m$ such that $N+mf^*H$ is nef. 
\end{claim}

Consider the  set 
\[
I:=\{S\,|\,S\text{ is an integral closed subscheme of }X\text{ such that  }
S\not\subset \Ex(f)\}.
\]
If $S\in I$, then the induced morphism $S \to f(S)$ is birational. 
Therefore, for every $S\in I$, 
we can find $n_S\in\mathbb Z_{>0}$ such that $(N+n_Sf^*H)|_{S}$ is big. 
Let $n_1:=n_X$. 
By Kodaira's lemma, we may write 
$N+n_1f^*H=A+D$ where $A$ is an ample $\mathbb Q$-divisor and $D$ is an effective $\mathbb Q$-divisor on $X$. 
Let $D=\sum e_jD_j$ be the  decomposition into irreducible components and let 
 $n_2:=\max_{D_j\in I}\{n_1, n_{D_j}\}$. 
For any $D_j\in I$, we denote by $D_j^N$ its normalisation and  we apply Kodaira's lemma to $(N+n_2f^*H)|_{D_j^N}$. By proceeding as above, since
at each step the dimension drops,  after finitely many steps we may find $n\in\mathbb Z_{>0}$ such that 
$(N+nf^*H)|_{S}$ is big for every $S\in I$. 
In particular, we have that $(N+nf^*H)\cdot C>0$ for every curve $C$ on $X$ with $C\not\subset \Ex(f)$. 

Since $N|_E$ is $g$-numerically trivial, 
after possibly replacing $N$ and $H$ by $\ell N$ and $\ell H$ respectively 
for some positive integer $\ell$,
(c) implies that there exists a divisor $Q$ on $f(E)$ such that $N|_E=g^*Q$. Thus,  for any sufficiently large positive integer  $m$, we have that  $(N+mf^*H)|_E=g^*(Q+mH|_{f(E)})$ is nef and  the Claim follows. 

\medskip

By the Claim, it follows that if  $m$ is a sufficiently large positive integer, the divisor $N+mf^*H$ is nef and big. If  $B$ is a curve on $X$, then  $(N+mf^*H) \cdot B=0$  if and only if $f(B)$ is a point. 
By (c) and Keel's theorem \cite[Theorem 0.2]{keel99}, 
it follows that $N+mf^*H$ is semi-ample. 
Since it induces the same morphism as $f$, 
there exists a $\Q$-Cartier $\Q$-divisor $F$ on $Y$ such that 
$N+mf^*H=f^*F$. Thus, (1) holds. 

Note that (2) follows from (1). We now show (3). Let $A_Y$ be an ample divisor on $Y$. By applying Kodaira's lemma to 
$f^*A_Y$, it follows that $E \cdot \zeta<0$.
Let $D$ be a prime divisor on $Y$ and let $D_X$ be its proper transform on $X$. 
By (1), there exists $\alpha \in \Q$ such that $D_X+\alpha E \equiv_f 0$. 
By (1), there exists a $\Q$-Cartier $\Q$-divisor $F$ on $Y$ such that 
$D_X+\alpha E \sim_{\Q} f^*F$, hence $D$ is $\Q$-Cartier. 
Thus, (3) holds. 
\end{proof}

\section{Some vanishing criteria}

In this section, we establish two vanishing criteria: 
Proposition~\ref{p-eff-nef-big} and Proposition~\ref{p-birat-kvv}. 
These results will be used in Section~\ref{s-cone} and Section~\ref{s-examples}.

\subsection{Vanishing for effective nef and big divisors}

The purpose of this subsection is to show Proposition~\ref{p-eff-nef-big} 
which is a special case of Kawamata--Viehweg vanishing. 
The key result is Lemma~\ref{l-frob-inje}. 
We first recall a basic result about Serre duality:

\begin{lem}\label{l-CM}
Let $X$ be a projective normal surface. 
Let $D$ be a $\Z$-divisor on $X$. 
Then there is an isomorphism of $k$-vector spaces 
\[
H^i(X, \MO_X(D)) \simeq H^{2-i}(X, \MO_X(K_X-D))^*,
\]
for any  $i=0,1$ and $2$, 
where $H^*$ denotes the dual vector space of $H$. 
\end{lem}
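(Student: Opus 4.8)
The plan is to deduce the statement from Grothendieck--Serre duality for Cohen--Macaulay projective schemes, the only genuine subtlety being that $\MO_X(D)$ need not be locally free once $X$ is singular, so one cannot simply invoke the smooth-case formula $H^i(\mathcal F)\simeq H^{2-i}(\mathcal F^{\vee}\otimes\omega_X)^*$. First I would record that a normal surface is automatically Cohen--Macaulay: by Serre's criterion $X$ satisfies $S_2$, and in dimension two the condition $S_2$ forces the local rings at closed points to have depth $2$, hence to be Cohen--Macaulay. Consequently $X$ admits a dualizing sheaf $\omega_X$, and since $X$ is normal this dualizing sheaf is the divisorial sheaf $\MO_X(K_X)$ attached to a canonical divisor $K_X$: the two reflexive sheaves agree on the smooth locus, whose complement has codimension two, and both satisfy $S_2$, so they agree on all of $X$.

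Next I would apply Serre duality in the form valid for an arbitrary coherent sheaf $\mathcal F$ on a Cohen--Macaulay projective scheme of dimension two, namely $H^i(X,\mathcal F)^*\simeq \operatorname{Ext}^{2-i}_X(\mathcal F,\omega_X)$ for all $i$. Taking $\mathcal F=\MO_X(D)$, it then remains to identify $\operatorname{Ext}^{2-i}_X(\MO_X(D),\omega_X)$ with $H^{2-i}(X,\MO_X(K_X-D))$. For this I would use the local-to-global spectral sequence $H^p(X,\mathcal{E}xt^q(\MO_X(D),\omega_X))\Rightarrow \operatorname{Ext}^{p+q}_X(\MO_X(D),\omega_X)$ and analyse the local $\mathcal{E}xt$ sheaves appearing on the $E_2$-page.

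The crucial step, and the one I expect to be the main obstacle, is the vanishing $\mathcal{E}xt^q(\MO_X(D),\omega_X)=0$ for $q>0$; here normality is decisive. The rank-one reflexive sheaf $\MO_X(D)$ satisfies $S_2$, so at each closed point of the surface it has depth two and is therefore a maximal Cohen--Macaulay module over the Cohen--Macaulay local ring $\MO_{X,x}$. For maximal Cohen--Macaulay modules the higher local $\mathcal{E}xt$ into the dualizing sheaf vanish, which gives the desired vanishing. Since the surviving term $\mathcal{E}xt^0(\MO_X(D),\omega_X)=\mathcal{H}om(\MO_X(D),\MO_X(K_X))$ is the reflexive sheaf $\MO_X(K_X-D)$ (again by comparison on the smooth locus together with the $S_2$ property), the spectral sequence degenerates and yields $\operatorname{Ext}^{2-i}_X(\MO_X(D),\omega_X)\simeq H^{2-i}(X,\MO_X(K_X-D))$. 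Combining this with Serre duality and dualizing the resulting finite-dimensional $k$-vector spaces gives the asserted isomorphism for $i=0,1,2$.
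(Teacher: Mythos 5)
Your proof is correct and takes essentially the same route as the paper: the paper's one-line proof cites \cite[Theorem~5.71]{km98} together with precisely your key observation that $\MO_X(D)$ is $S_2$, hence Cohen--Macaulay on a surface, and your spectral-sequence argument (Grothendieck duality plus vanishing of the higher local $\mathcal{E}xt$ sheaves of a maximal Cohen--Macaulay module into the dualizing sheaf) is just an unpacking of the proof of that cited duality statement. The remaining identifications $\omega_X \simeq \MO_X(K_X)$ and $\mathcal{H}om(\MO_X(D), \omega_X) \simeq \MO_X(K_X - D)$ via agreement on the smooth locus and reflexivity are the same steps the paper leaves implicit.
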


\begin{proof}
The claim follows from \cite[Theorem~5.71]{km98} and 
the fact that $\MO_X(D)$ is $S_2$, hence Cohen--Macaulay. 
\end{proof}

\begin{lem}\label{l-frob-inje}
Let $X$ be a proper normal variety and 
let $D$ be an effective $\Z$-divisor. 
If $H^1(X, \MO_X)=0$, then the natural map 
\[
H^1(X, \MO_X(-D)) \to H^1(X, F_*\MO_X(-pD)),
\]
induced by the absolute Frobenius morphism $F\colon X \to X$ is injective. 
\end{lem}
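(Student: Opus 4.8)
The map in question is induced by the sheaf homomorphism $\MO_X(-D)\to F_*\MO_X(-pD)$ sending a local section $f$ to $f^{\,p}$; since $D$ is effective this is simply the restriction of the usual Frobenius $\MO_X\to F_*\MO_X$. My plan is to fit this homomorphism into a map of short exact sequences and then run a cohomology diagram chase. Writing $\MO_D:=\MO_X/\MO_X(-D)$ and $\MO_{pD}:=\MO_X/\MO_X(-pD)$ for the structure sheaves of the effective divisors $D$ and $pD$, the Frobenius induces a commutative diagram
\[
\begin{CD}
0 @>>> \MO_X(-D) @>>> \MO_X @>>> \MO_D @>>> 0\\
@. @V{\alpha}VV @V{F}VV @VV{\beta}V @.\\
0 @>>> F_*\MO_X(-pD) @>>> F_*\MO_X @>>> F_*\MO_{pD} @>>> 0,
\end{CD}
\]
where $\beta$ sends $\overline f\mapsto \overline{f^{\,p}}$ (lift, raise to the $p$-th power, reduce modulo $\MO_X(-pD)$); this is well defined precisely because $f\in\MO_X(-D)$ forces $f^{\,p}\in\MO_X(-pD)$. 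The bottom row is exact because $F$ is affine.

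Passing to the long exact sequences in cohomology yields, by naturality of the connecting maps, a commutative square
\[
\begin{CD}
H^0(\MO_D) @>{\delta}>> H^1(\MO_X(-D))\\
@V{\beta_*}VV @VV{\alpha_*}V\\
H^0(F_*\MO_{pD}) @>{\delta'}>> H^1(F_*\MO_X(-pD)),
\end{CD}
\]
and $\alpha_*$ is the map we must show is injective. The hypothesis $H^1(X,\MO_X)=0$ enters here: the top row reads $H^0(\MO_X)\to H^0(\MO_D)\xrightarrow{\delta}H^1(\MO_X(-D))\to H^1(\MO_X)=0$, so $\delta$ is surjective and $\ker\delta$ is the image of the constants $H^0(\MO_X)=k$. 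Likewise, from the bottom row $\ker\delta'$ is the image of $H^0(F_*\MO_X)=H^0(\MO_X)=k$, i.e.\ the constant sections of $\MO_{pD}$.

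With these in hand the chase is short. Given $\xi\in\ker\alpha_*$, surjectivity of $\delta$ lets me write $\xi=\delta(\eta)$; then $\delta'(\beta_*\eta)=\alpha_*\xi=0$, so $\beta_*\eta$ lies in $\ker\delta'$, i.e.\ $\beta_*\eta$ is a constant $c\in k$. It remains to deduce that $\eta$ itself is constant, for then $\xi=\delta(\eta)=0$ by the description of $\ker\delta$. This last implication is the main obstacle, and it is where the arithmetic of characteristic $p$ is used: since $k$ is algebraically closed, hence perfect, I write $c=d^{\,p}$, so locally a lift $f$ of $\eta$ satisfies $(f-d)^p=f^{\,p}-d^{\,p}\in\MO_X(-pD)$; because $X$ is normal, membership in these divisorial sheaves can be tested by orders along prime divisors, and $\operatorname{ord}_{D_i}\!\big((f-d)^p\big)\ge p\cdot\operatorname{coeff}_{D_i}(D)$ gives $\operatorname{ord}_{D_i}(f-d)\ge\operatorname{coeff}_{D_i}(D)$, that is $f-d\in\MO_X(-D)$. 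Hence $\eta\equiv d$ in $\MO_D$, so $\eta$ is the constant $d$ and $\xi=0$. (The case $D=0$ is trivial, since then $H^1(\MO_X(-D))=H^1(\MO_X)=0$.) This proves $\alpha_*$ injective, as required.
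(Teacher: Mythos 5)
Your proof is correct and follows essentially the same route as the paper: you build the identical Frobenius-induced map of short exact sequences $0 \to \MO_X(-D) \to \MO_X \to \MO_D \to 0$ into its pushforward under $F$, use $H^1(X,\MO_X)=0$ and $H^0(X,\MO_X)=k$ together with perfectness of $k$, and your divisor-order computation $\operatorname{div}\bigl((f-d)^p\bigr)\geq pD \Rightarrow \operatorname{div}(f-d)\geq D$ is exactly the paper's key Claim (injectivity of the map $i_*\MO_D \to F'_*\MO_{pD}$), merely applied to $f-d$. The only cosmetic difference is that you run the chase in cohomology via the connecting homomorphisms, whereas the paper applies the snake lemma at the sheaf level to get $\Coker(\alpha)\hookrightarrow\Coker(\beta)$ and then concludes from $H^0(X,\Coker(\beta))=0$.
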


\begin{proof}
Consider the closed immersion $i\colon D\hookrightarrow X$ and the composition
\[
F'\colon pD\hookrightarrow X \xrightarrow {F} X. 
\]
Then, we obtain the commutative diagram 

\[
\begin{CD}
0 @>>> \MO_X(-D) @>>> \MO_X @>>> i_*\MO_D @>>> 0\\
@. @VV\alpha V @VV\beta V @VV\gamma V\\
0 @>>> F_*(\MO_X(-pD)) @>>> F_*\MO_X @>>> F'_*(\MO_{pD}) @>>> 0.\\
\end{CD}
\]

\medskip

We first prove the following

\begin{claim}
The natural homomorphism 
$\gamma\colon i_*\MO_D \to F'_*\MO_{pD}$
is injective.
\end{claim}

Fix an affine open subset $U$ of $X$. 
Take $f \in \Gamma(U, \MO_X)$ such that $\beta(f)=f^p \in 
\Gamma(U, F_*(\MO_X(-pD)))=\Gamma(U, \MO_X(-pD))$. 
This implies that 
\[
p~{\rm div}(f)={\rm div}(f^p) \geq pD.
\]
Thus, ${\rm div}(f) \geq D$ and the Claim holds. 

\medskip

Since $H^1(X, \MO_X)=0$ and $H^0(\beta)$ is bijective, 
it follows that 
\[
H^0(X, \Coker(\beta))=0.
\]
By the Claim and the snake lemma, we get an injection: 
\[
\Coker(\alpha) \hookrightarrow \Coker(\beta).
\]
Therefore, we obtain $H^0(X, \Coker(\alpha))=0$, as desired. 
\end{proof}

\begin{prop}\label{p-eff-nef-big}
Let $X$ be a projective klt rational surface. 
If $D$ is an effective nef and big $\Z$-divisor, then 
\[H^1(X, \MO_X(-D))=0 \quad\text{and}\quad  H^1(X, \MO_X(K_X+D))=0.
\]
\end{prop}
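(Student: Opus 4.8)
The plan is to deduce both vanishing statements from the Frobenius injectivity of Lemma~\ref{l-frob-inje} combined with Serre vanishing. First I would prove $H^1(X,\MO_X(-D))=0$. Since $X$ is a rational surface, $H^1(X,\MO_X)=0$, so Lemma~\ref{l-frob-inje} applies and gives, for every $e\geq 0$, an injection
\[
H^1(X,\MO_X(-D)) \hookrightarrow H^1(X, F^e_*\MO_X(-p^eD)) \simeq H^1(X,\MO_X(-p^eD)),
\]
where the last isomorphism holds because $F^e$ is a finite (in particular affine) morphism, so $R^jF^e_*=0$ for $j>0$ and the Leray spectral sequence degenerates. Iterating Lemma~\ref{l-frob-inje} across all powers $e$ is the heart of the argument: it shows $H^1(X,\MO_X(-D))$ embeds into $H^1(X,\MO_X(-p^eD))$ for arbitrarily large $e$.

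The second key step is to kill the right-hand side for $e\gg 0$. Here I would invoke Serre duality in the form of Lemma~\ref{l-CM}: we have
\[
H^1(X,\MO_X(-p^eD)) \simeq H^1(X,\MO_X(K_X+p^eD))^*.
\]
Since $D$ is nef and big, $K_X+p^eD$ becomes ample for $e$ large (as $p^eD$ eventually dominates $-K_X$ in the nef-and-big sense, e.g. by Kodaira's lemma write $D=A+E$ with $A$ ample), and then I would want $H^1$ of an ample line bundle to vanish. The clean way is to apply Serre vanishing directly to the sheaf $\MO_X(-p^eD)$: by duality the statement $H^1(X,\MO_X(-p^eD))=0$ is equivalent to $H^1(X,\MO_X(K_X+p^eD))=0$, and for $e$ large enough $K_X+p^eD$ is ample, so Serre vanishing (plus the Cohen--Macaulayness of these rank-one reflexive sheaves on the normal surface, already used in Lemma~\ref{l-CM}) gives the vanishing of $H^1$. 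Chaining the injections with this eventual vanishing forces $H^1(X,\MO_X(-D))=0$.

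For the second statement $H^1(X,\MO_X(K_X+D))=0$, I would simply apply Lemma~\ref{l-CM} once more: it identifies this group with $H^1(X,\MO_X(-D))^*$, which we have just shown is zero. Thus the two vanishings are Serre-dual to one another and a single argument settles both.

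The main obstacle I anticipate is verifying that the isomorphisms $H^1(X, F^e_*\MO_X(-p^eD)) \simeq H^1(X,\MO_X(-p^eD))$ and the duality of Lemma~\ref{l-CM} genuinely apply to the (non-Cartier in general, but here $\Z$-divisor hence Weil) reflexive sheaves involved, and that the target $K_X+p^eD$ really becomes ample rather than merely nef and big. The klt hypothesis on the surface guarantees $X$ has rational singularities in a suitable sense and that these divisorial sheaves are $S_2$ and Cohen--Macaulay, which is exactly what makes both Serre duality and Serre vanishing legitimate; confirming this compatibility is the delicate technical point, whereas the Frobenius-injectivity engine and the duality bookkeeping are then routine.
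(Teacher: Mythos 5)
Your opening moves coincide exactly with the paper's proof: reduce to the first vanishing via Lemma~\ref{l-CM}, note $H^1(X,\MO_X)=0$ for a klt rational surface, iterate Lemma~\ref{l-frob-inje} to embed $H^1(X,\MO_X(-D))$ into $H^1(X,\MO_X(-p^eD))$ for all $e$, and dualize again so that it suffices to prove $H^1(X,\MO_X(K_X+p^eD))=0$ for a single large $e$. The gap is in how you kill this last group. First, your claim that $K_X+p^eD$ becomes ample for $e\gg 0$ is false under the stated hypotheses: $D$ is only nef and big, so there may be a curve $C$ with $D\cdot C=0$ and $K_X\cdot C<0$ (take $X$ smooth rational, $C$ a $(-1)$-curve contracted by $\pi\colon X\to X'$, and $D=\pi^*H$ an effective pullback of an ample divisor), whence $(K_X+p^eD)\cdot C=K_X\cdot C<0$ for every $e$; Kodaira's lemma gives a decomposition $D\sim_{\Q}A+E$ but does not make any multiple of $D$ ample. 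Second, and more fundamentally, even if $K_X+p^eD$ were ample, the vanishing you invoke is not Serre vanishing but Kodaira-type vanishing for a single ample divisor, and that fails in positive characteristic on precisely the class of surfaces in the proposition: Theorem~\ref{t-KM-h1}(5),(6) produces klt rational surfaces in characteristic two carrying ample $\Z$-divisors $A$ with $H^1(T,\MO_T(A))\neq 0$ (these are the very examples this paper is built on). Serre vanishing only applies to $\mathcal F\otimes\MO_X(mH)$ with $\mathcal F$ a fixed coherent sheaf and $H$ a fixed ample \emph{Cartier} divisor, whereas your twists $p^eD$ run over multiples of a divisor that is neither Cartier nor ample, so it cannot be applied, directly or "to the sheaf $\MO_X(-p^eD)$".

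The paper closes the argument with a different, genuinely characteristic-$p$ input. Choosing $a$ so that the Cartier index of $p^aD$ is prime to $p$, and then $e$ with $e-a>0$ sufficiently divisible, the divisor $(p^e-p^a)D=p^a(p^{e-a}-1)D$ becomes Cartier (and is nef and big), so that $K_X+p^eD=(K_X+p^aD)+(p^e-p^a)D$ falls under the vanishing theorem \cite[Theorem 2.11]{tanaka12a} for klt surfaces. Some surface-specific vanishing theorem of this kind is unavoidable here; your Frobenius-plus-duality bookkeeping is correct and matches the paper, but the final vanishing cannot be obtained from Serre or Kodaira vanishing, and the Cartier-index manipulation is exactly the device needed to handle the non-Cartier divisor $D$ that you flagged as the delicate point.
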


\begin{proof}
By Lemma~\ref{l-CM}, it is enough to show the first equality. 
Since $H^1(X, \MO_X)=0$ \cite[Theorem 5.4 and Remark 5.5]{tanaka12}, 
we can apply Lemma~\ref{l-frob-inje}. 
Thus, there exists an injection 
\[
H^1(X, \MO_X(-D))\hookrightarrow H^1(X, \MO_X(-p^eD))
\]
for any positive integer $e$. 
By 
Lemma~\ref{l-CM}, it is enough to show that 
\[
H^1(X, \MO_X(K_X+p^eD))=0
\]
for some positive integer $e$. Let $a$ be a positive integer such that the Cartier index of $p^aD$ is not divisible by $p$. 
Then, for any
sufficiently large positive integer $e$ such that $e-a>0$ and $e-a$ is sufficiently divisible, the divisor $(p^e-p^a)D$ is Cartier. Thus, \cite[Theorem 2.11]{tanaka12a} implies the claim. 
\end{proof}

\begin{rem}
Note that in Proposition~\ref{p-eff-nef-big}, the assumption that $D$ is an effective divisor is needed as otherwise Kawamata-Viehweg vanishing might fail (cf. Theorem~\ref{t-KM-h1}).
\end{rem}

\subsection{A birational Kawamata--Viehweg vanishing}

We now describe a sufficient condition 
to apply the birational Kamawata--Viehweg vanishing (Proposition~\ref{p-birat-kvv}). 
This can be considered as a higher dimensional version 
of the birational Kawamata--Viehweg vanishing between surfaces 
proved by Koll\'ar and Kov\'acs (\cite[Theorem~10.4]{kollar13}). 

\begin{dfn}
Let $f\colon X \to S$ be a projective morphism from a smooth variety $X$ to a variety $S$. 
Let $\Delta$ be an effective simple normal crossing $\Q$-divisor on $X$ whose coefficients are at most one. 
We say that {\em Kawamata--Viehweg vanishing} (or {\em KVV} for short) 
{\em holds for} $(f, \Delta)$ if 
$R^if_*\MO_X(D)=$ for any $i>0$ and any Cartier divisor $D$ on $X$ such that $D-(K_X+\Delta)$ is $f$-ample. 
\end{dfn}

\begin{prop}\label{p-birat-kvv}
Let $f\colon X \to Y$ be a projective birational morphism from a smooth variety $X$ to a normal variety $Y$. 
Assume that 
\begin{enumerate}
\item[(a)] $E$ is an effective simple normal crossing divisor  
such that $\Supp E=\Ex(f)$ and $-E$ is $f$-ample. 
\item[(b)] For any $f$-exceptional prime divisor $E_i$ and any
effective simple normal crossing $\Q$-divisor $\Gamma$ on $E_i$ 
whose coefficients are at most one, 
 KVV holds for $(f \circ \iota_j, \Gamma)$, 
where $\iota_j:E_j \hookrightarrow X$ is the inclusion. 
\item [(c)]  $\Delta$ is an effective $\Q$-divisor on $X$ 
whose coefficients are at most one and $\Supp \Delta \cup \Ex(f)$ is simple normal crossing.
\end{enumerate}
 
Then  KVV holds for $(f, \Delta)$. 
\end{prop}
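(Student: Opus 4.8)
The plan is to deduce the higher-dimensional statement from the hypothesis (b), which localizes Kawamata--Viehweg vanishing to the exceptional divisors, by an induction on the dimension (or number) of the exceptional components, using a suitable choice of auxiliary boundary and the standard short exact sequences relating $\MO_X(D)$ to its restriction to the exceptional locus. First I would reduce to showing $R^if_*\MO_X(D)=0$ for $i>0$ where $D-(K_X+\Delta)$ is $f$-ample. Since $f$ is birational and $X$ is smooth, away from $\Ex(f)$ the higher direct images vanish automatically, so the sheaves $R^if_*\MO_X(D)$ are supported on $f(\Ex(f))$. The idea is to exploit hypothesis (a): because $-E$ is $f$-ample and $\Supp E = \Ex(f)$, for $N\gg 0$ the divisor $D+NE$ has the property that $R^if_*\MO_X(D+NE)$ can be controlled by relative Serre vanishing, and one propagates vanishing down the filtration of $E$.

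The key steps, in order, are as follows. I would set up the filtration of $\MO_X(-E)$ (or rather of the structure sheaves of the thickenings $jE$) via the short exact sequences
\[
0 \to \MO_X(D-(j+1)E)\cdot(\text{twist}) \to \MO_X(D-jE) \to \MO_{E}(D-jE)|_{\cdots} \to 0,
\]
more precisely the sequences
\[
0 \to \MO_X((D+(j+1)E)-E) \to \MO_X(D+(j+1)E) \to \MO_{E'}\bigl((D+(j+1)E)|_{E'}\bigr) \to 0
\]
for appropriate reduced components $E'$ of $E$, and push forward. By relative Serre vanishing (since $-E$ is $f$-ample) I may assume $N$ is large enough that $R^if_*\MO_X(D+NE)=0$ for $i>0$. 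Then descending induction on $N$ reduces the vanishing of $R^if_*\MO_X(D+jE)$ at each step to the vanishing of $R^if_*$ of the structure sheaf contribution concentrated on a single exceptional prime divisor $E_j$, restricted to which $D+(j+1)E$ has the shape $K_{E_j}+\Gamma$ plus an $(f\circ\iota_j)$-ample Cartier divisor, by adjunction $(K_X+E_j)|_{E_j}=K_{E_j}$ together with the simple normal crossing hypotheses (a) and (c). At that point hypothesis (b) applies directly to kill the relevant $R^i(f\circ\iota_j)_*$, and I would lift this to the vanishing of $R^if_*$ along $E_j$ via the Leray spectral sequence for $f\circ\iota_j$.

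The main obstacle I anticipate is the bookkeeping in the adjunction and restriction step: one must verify that, after restricting the twisted line bundle to each reduced exceptional component $E_j$, the remainder after subtracting $K_{E_j}$ is genuinely an effective simple normal crossing $\Q$-divisor $\Gamma$ with coefficients at most one \emph{and} that the Cartier divisor being restricted remains $(f\circ\iota_j)$-ample, so that the hypothesis (b) is literally applicable. This requires carefully tracking the coefficients coming from $\Delta$, from the thickening number $j$, and from the self-intersection/conormal contributions of $E$, and arranging the choices so that the fractional parts behave. A secondary point is ensuring the filtration argument is compatible with the twisting so that each graded piece restricts to exactly one component $E_j$ rather than a union; this is handled by ordering the components and peeling them off one at a time, at the cost of a nested induction. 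Once the restriction data is shown to land in the hypotheses of (b), the spectral sequence argument and the descending induction on $N$ are routine.
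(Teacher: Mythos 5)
Your overall architecture coincides with the paper's proof: peel off $f$-exceptional prime divisors one at a time via short exact sequences of the form $0 \to \MO_X(D_{j+1}) \to \MO_X(D_j) \to \MO_{E_{i_j}}(D_j) \to 0$ with $D_{j+1}=D_j-E_{i_j}$, kill the quotient term using hypothesis (b) via adjunction, and terminate using a relative vanishing theorem made available by the $f$-ampleness of $-E$. But as written there are two genuine problems. First, a direction error: since $-E$ is $f$-ample, relative Serre vanishing applies to $\MO_X(D-NE)$, not to $\MO_X(D+NE)$; correspondingly, the long exact sequence only propagates vanishing in the \emph{subtracting} direction (vanishing of $R^if_*$ of the subsheaf and of the quotient yields the middle term), whereas your stated descending induction from $D+(j+1)E$ to $D+jE$ would require $f_*$ of the quotient to vanish, which is false. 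Even after fixing the signs, the intermediate divisors $D_j$ differ from $D$ by partial sums of prime components rather than exact multiples of $E$, which is why the paper invokes Keeler's relative Fujita vanishing (uniform over bounded perturbations) rather than plain Serre vanishing at the endpoint.

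Second, and more seriously, the step you defer as ``the main obstacle'' is precisely the content of the paper's proof, and it is not routine bookkeeping. One cannot peel the components in an arbitrary (e.g.\ round-robin) order: for hypothesis (b) to apply to the quotient on $E_{i_j}$ one needs an identity of the form $D_j-(K_X+B+E_{i_j}+\Delta_{j+1})=A-\lambda_{j+1}E$, where $B$ is the non-exceptional part of $\Delta$ and $\Delta_{j+1}$ is effective, exceptional, with coefficients at most one; subtracting a component out of turn would replace the $f$-ample twist $A-\lambda E$ by $A-\lambda E-E_i$, which need not be $f$-ample since $-E_i$ alone need not be $f$-nef. The paper's device is to raise the coefficient of the \emph{whole} divisor $E$ inside the auxiliary boundary, $\Delta_j \mapsto \Delta_j+\mu_j E$, exactly until some component's coefficient reaches one, and to peel precisely that component $E_{i_j}$, resetting its boundary coefficient to zero; this keeps the twist equal to $A-\lambda_j E$, hence $f$-ample, and keeps the restricted boundary SNC with coefficients at most one. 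One must then also prove that the process does not stall, i.e.\ that $\lambda_j \to \infty$: the paper shows some index recurs infinitely often, and between consecutive occurrences the increments $\mu_i e_1$ sum to $1$, forcing $\sum\mu_i=\infty$. Without this selection rule and the divergence argument your induction has neither a valid step nor a valid termination, so the proposal identifies the correct strategy but does not yet constitute a proof.
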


\begin{proof}
Let $E=\sum_{i \in I} e_iE_i$ be the  decomposition into irreducible components with $e_i \in \Z_{>0}$. 
Let $D$ be a Cartier divisor on $X$ such that $D-(K_X+\Delta)$ is $f$-ample. 
We want to show that $R^if_*\MO_X(D)=0$ for $i>0$. 

Consider the decomposition 
\[
\Delta=B+\Delta_0,
\]
where $B$ and $\Delta_0$ are effective $\Q$-divisors 
such that $\Supp \Delta_0 \subset \Ex(f)$ and  
any irreducible component of $B$ is not contained in $\Ex(f)$. 
Let 
\[
A:=D-(K_X+\Delta)=D-(K_X+B+\Delta_0).
\]
We define 
\[
\lambda_0:=0 \quad \text{and}\quad D_0:=D.
\]
We construct a sequence of triples 
\[
\{(\lambda_j, D_j, \Delta_j)\}_{j \in \Z_{>0}}
\]
which satisfies the following properties
\begin{enumerate}
\item[$(1)_j$] $\lambda_j$ is a rational number such that 
\[
0\leq \lambda_0 \leq \lambda_1 \leq  \cdots \leq \lambda_j.
\]
\item[$(2)_j$] $\Delta_j$ is an effective $f$-exceptional $\Q$-divisor whose coefficients are at most one. 
\item[$(3)_j$] $D_j$ is a $\Z$-divisor such that $D_j-(K_X+B+\Delta_j)=A-\lambda_jE.$ 
\item[$(4)_j$] If $R^qf_*\MO_X(D_j)=0$ for $q>0$, then $R^qf_*\MO_X(D_{j-1})=0$ for $q>0$. 
\item[$(5)$] $\lim_{j \to \infty} \lambda_j=\infty$. 
\end{enumerate}
Fix $j \in \Z_{\geq 0}$ and assume that we have already constructed 
\[
(\lambda_0, D_0, \Delta_0), \cdots, (\lambda_j, D_j, \Delta_j)
\]
such that $(1)_k, (2)_k, (3)_k, (4)_k$ hold for $0 \leq k \leq j$. 
Let $\mu_j$ be the non-negative rational number such that 
\[
\Delta_j+\mu_j E \leq 1
\]
and the coefficient of some prime divisor $E_{i_j}$ in $\Delta_j+\mu_j E$ is equal to one. 
We define 
\[
\lambda_{j+1}:=\lambda_j+\mu_j, \quad \Delta_{j+1}:=\Delta_j+\mu_j E-E_{i_j}, \quad D_{j+1}:=D_j-E_{i_j}.
\]
Then $(1)_{j+1}$ and $(2)_{j+1}$ hold. 
We see that $D_{j+1}$ is a $\Z$-divsior such that 
\[
\begin{aligned}
D_{j+1}-(K_X+B+\Delta_{j+1})&=(D_j-E_{i_j})-(K_X+B+\Delta_j+\mu_j E-E_{i_j})\\
&=(D_j-(K_X+B+\Delta_j))-\mu_j E\\
&=A-\lambda_j E-\mu_j E=A-\lambda_{j+1} E.
\end{aligned}
\]
Thus, $(3)_{j+1}$ holds. 
We now show $(4)_{j+1}$. 
Consider the exact sequence
\[
0 \to \MO_X(D_{j+1}) \to \MO_X(D_j) \to \MO_{E_{i_j}}(D_j) \to 0.
\]
Since $\Supp (B+\Delta_{j+1})$ does not contain $E_{i_j}$ and 
\[
D_j-(K_X+B+E_{i_j}+\Delta_{j+1})=A-\lambda_{j+1}E
\]
is $f$-ample, (b) implies  that 
\[
R^qf_*\MO_{E_{i_j}}(D_j)=0
\]
for any $q>0$. 
Therefore, $(4)_{j+1}$ holds. 

We now show $(5)$. 
There exists an infinite increasing sequence of positive integers $\{j_k\}_{k \in \Z_{>0}}$ such that $i_{j_k}$ is constant. After possibly reordering, we may assume $i_{j_k}=1$ for all $k$. 
For all $j$, let $\delta_j$ be the coefficient of $\Delta_j$ along $E_1$. 
For any $k$, it follows from the construction above that $\delta_{j_k+1}=0$ and
\[
\sum_{i=j_{k}+1}^{j_{k+1}} \mu_i e_1 = 1. 
\]
Thus, 
\[
\sum_{i=0}^{\infty} \mu_i=\infty
\]
and since $\lambda_j=\sum_{i=0}^{j-1} \mu_i$, (5) holds. 

\medskip

By $(4)_j$, it is enough to find $j>0$ such that 
\[
R^qf_*\MO_X(D_j)=0
\]
for any $q>0$. 
This follows from $(3)_j$, (5) and the relative Fujita vanishing theorem \cite[Theorem 1.5]{keeler03}.
\end{proof}

\section{Keel--M\textsuperscript cKernan surfaces}\label{s-KM-surface}

In this section, we first recall the construction of surfaces obtained by Keel and M\textsuperscript cKernan (Subsection~\ref{ss-KM-surface}). 
After that, we produce some divisors on these surfaces 
which violate Kawamata--Viehweg vanishing (Theorem~\ref{t-KM-h1}).

\subsection{Construction}\label{ss-KM-surface}
We now recall the construction of rank one del Pezzo surfaces in characteristic two, obtained by Keel and M\textsuperscript cKernan \cite[page 77]{km99}. 

Let $k$ be an algebraically closed field of characteristic two. Let
$\Gamma_0$ be a strange conic in  $\mathbb P_{k}^2$ \cite[Example IV.3.8.2]{Hartshorne77}, i.e. 
there exists a closed point $Q\in \mathbb P^2_k$ such that any line passing through $Q$ is tangent to $\Gamma_0$. 
Let $S_1 \to \mathbb P_k^2$ be the blow-up of $\mathbb P^2_{k}$ at the point $Q$. 
Then $S_1$ admits a $\mathbb P_k^1$-bundle structure $\rho_1\colon S_1 \to \mathbb P_k^1$. 
If $\Gamma_1$ is the inverse image of $\Gamma_0$ in $S_1$,  then 
the induced morphism $\Gamma_1 \hookrightarrow S_1 \xrightarrow{\rho_1} \mathbb P_k^1$ 
is  purely inseparable  of degree two. 

Let $P_1, \dots, P_d\in \Gamma_1$ be distinct points and let $S_2\to S_1$ be the blow-up of $S_1$ at  the points $P_1, \dots, P_d$.
Since the fibre $F_i:=\rho_1^{-1}(\rho_1(P_i))$ is tangent to $\Gamma_1$
for  $i=1,\dots,d$, the proper transforms $F'_i$ and $\Gamma_2$ of $F_i$ and $\Gamma_1$ respectively,  intersect in a point $Q'_i$.  Let $S\to S_2$ be the blow-up of $S_2$ at the points $Q'_1,\dots,Q'_d$ and 
let $\Gamma$ to be the proper transform of $\Gamma_2$ on $S$. 
Then the fibration 
$\rho\colon S \to \mathbb P_k^1$
induced by $\rho_1$ has exactly $d$ singular fibres: 
\[
2E_1+\ell_1+\ell'_1, \dots, 2E_d+\ell_d+\ell'_d,
\]
where, for each $i=1,\dots,d$,  $E_i$ is a $(-1)$-curve,   $\ell_i$ is the proper transform of $F_i$ and $\ell'_i$ is the proper  transform of the exceptional divisor of $S_2\to S_1$ with centre $P_i$. 
In particular, $\ell_i^2=\ell_i'^2=-2$ and $\Gamma^2=4-2d$. We will always assume that $d\ge 3$, so that $\Gamma^2<0$. 

Let $\varphi\colon S \to \mathbb P^2_k$ be the induced morphism and let 
\[
\psi\colon S \to T
\]
be the birational contraction of the curves $\Gamma, \ell_1, \ell'_1, \dots, \ell_d, \ell'_d$. 
For each $i=1,\dots,d$, let  $E^T_i:=\psi_*(E_i)$. Let $F$ be the general fibre of $\rho\colon S \to \mathbb P^1_k$. 
Note that $\Gamma\cdot F=2$.

\begin{lem}\label{l_ST}
With the same notation as above, the following  hold:
\begin{enumerate}
\item $-K_S \sim \Gamma+F$. 
\item $T$ is klt and $\rho(T)=1$.
\item $-K_T$ is ample and $-K_T\sim 2E_i^T$ for any $i=1,\dots,d$.  
\end{enumerate}
\end{lem}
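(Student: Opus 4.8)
The plan is to extract all three statements from the explicit geometry of the chain of blow-ups $S \to S_2 \to S_1 \to \mathbb P^2_k$ and the contraction $\psi \colon S \to T$, working on the smooth surface $S$ where intersection numbers are computable and then pushing forward to $T$.

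\medskip

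\textbf{Statement (1).} First I would compute $K_S$ directly. Since $S_1 \to \mathbb P^2_k$ is a single blow-up giving a Hirzebruch surface with fibration $\rho_1$, and $S \to S_1$ is a composition of $2d$ further blow-ups, I can write $K_S$ as the pullback of $K_{\mathbb P^2}$ (or of $K_{S_1}$) plus the appropriate sum of exceptional divisors. Alternatively, and more cleanly, I would use the ruled structure: for the $\mathbb P^1$-fibration $\rho \colon S \to \mathbb P^1_k$ one has $-K_S \equiv 2(\text{section}) + (\text{fibre terms})$, and the strange conic $\Gamma$ meets the general fibre $F$ in $\Gamma \cdot F = 2$, so $\Gamma$ behaves like a bisection. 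The goal is to verify $-K_S \sim \Gamma + F$ as linear (not merely numerical) equivalence; I would pin this down by intersecting both sides against a generating set of divisor classes (e.g. $F$, $\Gamma$, and the exceptional curves $E_i, \ell_i, \ell'_i$) using the adjunction formula $K_S \cdot C + C^2 = 2p_a(C) - 2$ on each relevant curve, together with the known self-intersections $\ell_i^2 = \ell_i'^2 = -2$, $\Gamma^2 = 4-2d$, and $E_i^2 = -1$. Since $S$ is a smooth rational surface, numerical and linear equivalence coincide up to the (trivial) considerations here, so matching intersection numbers suffices.

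\medskip

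\textbf{Statement (2).} To see $\rho(T) = 1$, I would count: $\rho(S) = \rho(\mathbb P^2) + (1 + 2d) = 2 + 2d$ (one blow-up to reach $S_1$, then $2d$ more), and $\psi$ contracts exactly $2d+1$ curves, namely $\Gamma, \ell_1, \ell_1', \dots, \ell_d, \ell_d'$. Provided these contracted curves are numerically independent and their contraction drops the Picard number by exactly $2d+1$, we obtain $\rho(T) = (2+2d) - (2d+1) = 1$. For the independence I would check that the intersection matrix of the contracted curves is negative definite, which is exactly what licenses the contraction $\psi$ in the first place; this also yields that $T$ is a normal projective surface. For kltness, I would verify that the contracted configuration consists of curves whose discrepancies are all $> -1$: the $\ell_i, \ell_i'$ are $(-2)$-curves (giving rational double points, which are canonical hence klt), and $\Gamma$ is a smooth rational curve with $\Gamma^2 = 4 - 2d$; one writes $K_S = \psi^*K_T + \sum a_j (\text{exceptional})$ and solves the linear system coming from $K_S \cdot C_j = (\psi^*K_T + \sum a_i C_i)\cdot C_j = \sum a_i (C_i \cdot C_j)$ for each contracted curve $C_j$, checking every discrepancy $a_j > -1$.

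\medskip

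\textbf{Statement (3).} Once (1) and (2) are in hand, ampleness of $-K_T$ follows because $\rho(T) = 1$ and $-K_T$ is the pushforward of the big and nef class $-K_S = \Gamma + F$ (big and nef downstairs on a rank-one surface forces ample). For the relation $-K_T \sim 2E_i^T$, I would compute on $S$: the point is that $-K_S = \Gamma + F$ and each singular fibre is $2E_i + \ell_i + \ell_i'$, so $F \equiv 2E_i + \ell_i + \ell_i'$; after pushing forward by $\psi$, the components $\Gamma, \ell_i, \ell_i'$ die and one is left with $\psi_*(-K_S) = \psi_*(\Gamma + F) = \psi_*(2E_i + \ell_i + \ell_i') = 2E_i^T$. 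I expect the main obstacle to be \emph{upgrading numerical to linear equivalence} in characteristic two and making the discrepancy computation of (2) fully rigorous, since the strange conic $\Gamma$ is an inseparable-geometry phenomenon: I would need to be careful that the pushforward relations are genuine linear equivalences of $\Z$-divisors (or at least $\Q$-linear, as stated), tracking torsion in $\Pic$ and the precise coefficients in the singular fibres rather than relying on characteristic-zero intuition.
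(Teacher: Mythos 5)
Your overall route is the same as the paper's: the paper's entire proof is the one-line observation that (1) and (2) "follow from the construction" and that (3) follows from (1), (2) and $F\sim 2E_i+\ell_i+\ell'_i$, so your proposal is essentially that argument with the details written out. Those details are correct where you give them: $K_S=-3H+e_0+\sum_i(\ell'_i+2E_i)$ and $\Gamma=2H-\sum_i(\ell'_i+2E_i)$, $F=H-e_0$ in the free group $\Pic(S)$ gives (1) as a genuine linear equivalence (your remark that numerical and linear equivalence agree on a smooth rational surface is the right way to dispose of the torsion worry); the exceptional curves of $\psi$ are the $2d+1$ pairwise disjoint curves $\Gamma,\ell_i,\ell'_i$ with $\ell_i^2=\ell_i'^2=-2$ and $\Gamma^2=4-2d<0$, so the configuration is negative definite, $\rho(T)=(2+2d)-(2d+1)=1$, the $(-2)$-curves give canonical ($A_1$-type) points, and the discrepancy along $\Gamma$ is $-(2d-6)/(2d-4)>-1$ (consistent with the paper's later use of the fact that $K_S+\frac{2d-6}{2d-4}\Gamma$ is $\psi$-numerically trivial), so $T$ is klt.

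There is, however, one incorrect step in your treatment of (3): $-K_S=\Gamma+F$ is \emph{not} nef. Indeed $(\Gamma+F)\cdot\Gamma=\Gamma^2+2=6-2d<0$ for $d\ge 4$ (and the paper's examples use $d\ge 4$), which is exactly why $\Gamma$ gets contracted; so "$-K_T$ is the pushforward of the big and nef class $-K_S$" fails, and even the bigness of $-K_S$, while true, is not obvious without first knowing $-K_T$ is ample (one sees it a posteriori from $-K_S=\psi^*(-K_T)+\frac{d-3}{d-2}\Gamma$). The repair is immediate and uses only what you already have: by your pushforward computation $-K_T\sim\psi_*(\Gamma+2E_i+\ell_i+\ell'_i)=2E_i^T$, a nonzero effective divisor, which is $\Q$-Cartier since $T$ is klt (equivalently, since contractions of negative definite curve configurations on surfaces are $\Q$-factorial). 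On a normal projective surface with $\rho(T)=1$, any nonzero effective $\Q$-Cartier divisor $D$ satisfies $D\equiv\lambda H$ with $\lambda=(D\cdot H)/H^2>0$ for an ample $H$, hence is ample. With that substitution your proof is complete and matches the paper's intent.
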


\begin{proof} (1) and (2) follow from the construction. 
Since $F\sim 2E_i+\ell_i+\ell'_i$ for each $i=1,\dots,d$, we have that (3) is an immediate consequence of (1) and (2). 
\end{proof}

\subsection{Counterexamples to Kawamata--Viehweg vanishing}

We now construct a sequence of divisors on $T$ 
which violate Kawamata--Viehweg vanishing. 
Note that, by \cite[Theorem 1.2] {CTW15b}, Kawamata--Viehweg vanishing  holds 
on a klt del Pezzo surface of sufficiently large characteristic. On the other hand, in 
\cite{CT16b}, we show that Kawamata--Viehweg vanishing fails over smooth rational surfaces in 
arbitrary  characteristic.  

\begin{thm}\label{t-KM-h1}
We use the same notation as in Subsection~\ref{ss-KM-surface}. 
Let
\[
A:=\sum_{i=1}^{q_1}E^T_i-\sum_{j=q_1+1}^{q_1+q_2}E^T_j
\]
for some non-negative integers $q_1$ and $q_2$ such that $q_1+q_2 \leq d$. 
 
Then the following  hold: 
\begin{enumerate}
\item If $q_2>0$, then $H^0(T, \MO_T(A))=0$. 
\item $H^2(T, \MO_T(A))=0$. 
\item $\chi(T, \MO_T(A))=1-q_2+(q_1-q_2-d+3)\llcorner \frac{q_1-q_2}{2d-4}\lrcorner-
\left(\llcorner \frac{q_1-q_2}{2d-4}\lrcorner\right)^2(d-2).$
\item If $q_2=0$, then $H^1(T, \MO_T(A))=0$. 
\item If $q_2>0$ and $q_1-q_2 \geq 0$, then $h^1(T, \MO_T(A))=q_2-1$. 
\item If $q_2>0$ and $q_1-q_2 < 0$, then $h^1(T, \MO_T(A))=q_1$. 
\end{enumerate}
\end{thm}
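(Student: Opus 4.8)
The plan is to compute everything on the minimal resolution $\psi\colon S \to T$, where the geometry is explicit (a $\mathbb{P}^1$-bundle over $\mathbb{P}^1_k$ blown up), and then transfer back to $T$. Since $\psi$ is the contraction of the curves $\Gamma, \ell_1, \ell_1', \dots, \ell_d, \ell_d'$, and these are all $(-2)$-curves except $\Gamma$ (with $\Gamma^2 = 4-2d < 0$), I expect $\psi$ to be crepant or close to it. Writing $A^S := \psi^* A$ as a $\mathbb{Q}$-divisor on $S$ and taking its round-down, I would use $H^i(T, \MO_T(A)) \cong H^i(S, \MO_S(\llcorner \psi^* A \lrcorner))$ (this identification for the pushforward of a reflexive sheaf under a resolution is where klt-ness of $T$ and the structure of $\psi$ enter). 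Here the nonzero fractional part $\{\psi^* A\}$, flagged in the introduction as the source of difficulty, will matter: I must pin down $\llcorner \psi^* A \lrcorner$ precisely in terms of $E_i$, $\Gamma$, $\ell_j$, $\ell_j'$.

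\emph{First} I would establish the vanishing statements (1) and (2), which should be the softer part. For (1), when $q_2 > 0$ the divisor $A$ has a negative-coefficient part $-\sum E_j^T$; since each $E_i^T$ satisfies $-K_T \sim 2 E_i^T$ (Lemma~\ref{l_ST}(3)) and $-K_T$ is ample, the classes $E_i^T$ are all numerically proportional and effective, so a section of $\MO_T(A)$ would force an effective divisor numerically equivalent to something with a genuinely negative $E_j^T$-contribution — pulling back to $S$ and intersecting against the fibre $F$ or against $E_j$ should yield a contradiction. For (2), by Lemma~\ref{l-CM} (Serre duality on the normal surface $T$) I have $H^2(T, \MO_T(A)) \cong H^0(T, \MO_T(K_T - A))^*$, and since $-K_T$ is ample while $A$ is (roughly) effective-leaning, $K_T - A$ should be anti-ample enough to have no sections; again I would check this by pulling back to $S$ and using that $-K_T \sim 2E_i^T$ to rewrite $K_T - A$.

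\emph{The main obstacle} will be the Euler characteristic formula (3), since it determines the exact cohomology dimensions in (5) and (6) once (1) and (2) are known. The plan is to compute $\chi(T, \MO_T(A))$ via Riemann--Roch, most cleanly on $S$ using $\chi(S, \MO_S(\llcorner \psi^* A \lrcorner)) = \chi(\MO_S) + \tfrac{1}{2}\llcorner \psi^* A \lrcorner \cdot (\llcorner \psi^* A \lrcorner - K_S)$ with $\chi(\MO_S) = 1$ (as $S$ is a rational surface). The appearance of $\llcorner \frac{q_1 - q_2}{2d-4} \lrcorner$ strongly suggests that the round-down of $\psi^* A$ involves a coefficient along $\Gamma$ of the form $\frac{q_1 - q_2}{\Gamma^2}$-type ratio, recalling $\Gamma^2 = 4 - 2d = -(2d-4)$; so the floor function is exactly the integer part of the $\Gamma$-coefficient of $\psi^* A$. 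I would first solve the linear system determining the $\mathbb{Q}$-coefficients of $\psi^* A = A^S + (\text{exceptional correction})$ by imposing $\psi^* A \cdot C = 0$ for each contracted curve $C \in \{\Gamma, \ell_j, \ell_j'\}$, extract the $\Gamma$-coefficient, round down, and feed the resulting $\Z$-divisor into Riemann--Roch — matching the quadratic-in-$\llcorner\cdot\rrcorner$ shape of the claimed formula against $(d-2) = -\tfrac{1}{2}\Gamma^2$.

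\emph{Finally}, statements (4), (5), (6) follow formally: once (1), (2), (3) are in hand, $h^1 = h^0 + h^2 - \chi = h^0 - \chi$, so (4) reduces to checking $\chi = 1$ and $h^0 = 1$ when $q_2 = 0$ (the constant function), while (5) and (6) are arithmetic simplifications of the formula in (3) under the constraints $q_1 - q_2 \geq 0$ and $q_1 - q_2 < 0$ respectively, where the floor either stabilizes or vanishes. I would treat these as case-by-case evaluations of the polynomial in (3), taking care that in the range $q_1 - q_2 < 0$ the floor term $\llcorner \frac{q_1-q_2}{2d-4}\lrcorner$ equals $-1$ (since $0 > q_1 - q_2 \geq -d > -(2d-4)$ for $d \geq 3$), which should collapse (3) down to exactly $q_1$ after using $h^0 = 0$.
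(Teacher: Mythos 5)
Your skeleton — pass to the minimal resolution, identify $H^i(T,\MO_T(A))$ with $H^i(S,\MO_S(\llcorner\psi^*A\lrcorner))$ (the paper checks that $\llcorner\psi^*A\lrcorner-(K_S+\tfrac{2d-6}{2d-4}\Gamma)$ is $\psi$-nef and invokes the surface birational Kawamata--Viehweg vanishing of Koll\'ar--Kov\'acs to kill $R^1\psi_*$), Riemann--Roch on $S$ for (3), Serre duality plus the rewriting $-K_T\sim 2E^T_i$ for (2), and arithmetic for (5)--(6) — matches the paper. But there is a genuine gap at (4). You treat (4) as formal: ``$\chi=1$ and $h^0=1$ when $q_2=0$.'' First, this is false in edge cases: for $q_2=0$ the $\Gamma$-coefficient $\llcorner q_1/(2d-4)\lrcorner$ equals $1$ when $(d,q_1)\in\{(3,2),(3,3),(4,4)\}$, and then $\chi\neq 1$ (e.g.\ $d=3$, $q_1=2$ gives $\chi=2$), so $h^0$ would have to be computed to be $2$, etc. Second, and more seriously, you offer no way to compute $h^0$, and pinning down $h^0(S,\sum E_i+\llcorner\cdot\lrcorner\Gamma)$ is essentially as hard as the $h^1$-vanishing itself. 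The paper's proof of (4) is not formal: it uses Proposition~\ref{p-eff-nef-big}, a genuinely characteristic-$p$ vanishing for \emph{effective} nef and big divisors, whose engine is the Frobenius injectivity of Lemma~\ref{l-frob-inje} together with a Cartier-index trick. This input is unavoidable: parts (5)--(6) of the very theorem show that Kawamata--Viehweg vanishing fails for these ample divisors $A$, so no purely numerical or formal argument can yield (4); only the effectivity hypothesis, exploited via Frobenius, saves it.

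Two secondary issues. In (1), your $T$-level heuristic cannot work as stated: for $q_1>q_2>0$ the divisor $A$ is \emph{ample} (numerically a positive multiple of an ample class, as $\rho(T)=1$), so no intersection-theoretic contradiction exists on $T$; the absence of sections is visible only on $S$ after rounding down, where the paper strips off $\sum_{i\le q_1}E_i$ (these appear with coefficient $+1$ but $\llcorner\psi^*A\lrcorner$ has negative degree on each $E_i$) via the restriction exact sequence, leaving a visibly non-effective divisor. Your intersect-with-$F$ idea only gives $\llcorner\psi^*A\lrcorner\cdot F=2\llcorner\tfrac{q_1-q_2}{2d-4}\lrcorner\le 0$, hence verticality of any effective representative, and needs further argument. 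In (6), your inequality $0>q_1-q_2\ge -d>-(2d-4)$ is false for $d=3$: when $d=q_2=3$, $q_1=0$, the floor is $\llcorner -3/2\lrcorner=-2$, not $-1$; the paper isolates exactly this case (it happens that $\chi=0=-q_1$ either way, so your final answer survives by coincidence, but the stated reasoning is wrong and would not be accepted as written).
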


\begin{proof}
Since 
\begin{eqnarray*}
\psi^*A
&=&\sum_{i=1}^{q_1}(E_i+\frac{1}{2}(\ell_i+\ell_i'))-\sum_{j=q_1+1}^{q_1+q_2}(E_j+\frac{1}{2}(\ell_j+\ell_j'))
+\frac{q_1-q_2}{2d-4}\Gamma,
\end{eqnarray*}
we have that 
\begin{eqnarray*}
\llcorner \psi^*A\lrcorner
&=&\sum_{i=1}^{q_1}E_i-\sum_{j=q_1+1}^{q_1+q_2}(E_j+\ell_j+\ell_j')
+\llcorner \frac{q_1-q_2}{2d-4}\lrcorner\Gamma\\
&\equiv& \frac{q_1-q_2}{2}F-\frac{1}{2}\sum_{i=1}^{q_1+q_2}(\ell_i+\ell'_i)
+\llcorner \frac{q_1-q_2}{2d-4}\lrcorner\Gamma.\\
\end{eqnarray*}

Since $K_S+\frac{2d-6}{2d-4}\Gamma$ is $\psi$-numerically trivial 
by (1) of Lemma \ref{l_ST}, 
we have that 
$\llcorner \psi^*A\lrcorner -(K_S+\frac{2d-6}{2d-4}\Gamma)$ is $\psi$-nef. 
We have that 
\[
\psi_*\MO_S(\llcorner \psi^*A\lrcorner)=\MO_T(A)
\]
and by  Kawamata--Viehweg vanishing theorem for  birational morphisms between surfaces  \cite[Theorem 10.4]{kollar13}, we have that

\[
 R^i\psi_*\MO_S(\llcorner \psi^*A\lrcorner)=0 \qquad\text{for any $i>0$}. 
 \]
Thus, the Leray spectral sequence induces an isomorphism  
\[
H^i(S, \MO_S(\llcorner \psi^*A\lrcorner)) \simeq H^i(T, \MO_T(A)) \qquad\text{for any $i\ge 0$}.
\]
In particular, $\chi(S, \MO_S(\llcorner \psi^*A\lrcorner))=\chi(T, \MO_T(A))$.

\medskip

We first show (1). 
Since $q_2>0$, we have that $\llcorner \frac{q_1-q_2}{2d-4}\lrcorner \leq 0$ and $H^0(E_i, \MO_{E_i}(\llcorner \psi^*A\lrcorner))=0$ for any $i=1, \dots, q_1$.
From the exact sequence
\[
0 \to \MO_S(\llcorner \psi^*A\lrcorner -\sum_{i=1}^{q_1}E_i) \to \MO_S(\llcorner \psi^*A\lrcorner) 
\to \bigoplus_{i=1}^{q_1}\MO_{E_i}(\llcorner \psi^*A\lrcorner) \to 0,
\]
 it follows that 
\[
H^0(S,  \MO_S(\llcorner \psi^*A\lrcorner -\sum_{i=1}^{q_1}E_i)) \simeq H^0(S, \MO_S(\llcorner \psi^*A\lrcorner)).
\]
Since $q_2>0$, we have that 
$H^0(S,  \llcorner \psi^*A\lrcorner -\sum_{i=1}^{q_1}E_i)=0$. 
Thus, (1) holds.

We now show (2). 
By Lemma \ref{l-CM}, we have that 
\[
h^2(T, \MO_T(A)) = h^0(T, \MO_T(K_T-A)).
\]
By (3) of Lemma \ref{l_ST},  it follows that  (1) implies (2).

We now show (3). 
We have 
\begin{eqnarray*}
(\llcorner \psi^*A\lrcorner)^2
&=&-(q_1+q_2)+2(q_1-q_2)\llcorner \frac{q_1-q_2}{2d-4}\lrcorner+
\left(\llcorner \frac{q_1-q_2}{2d-4}\lrcorner\right)^2(4-2d).
\end{eqnarray*}
and by (1) of Lemma \ref{l_ST},
\[
(\llcorner \psi^*A\lrcorner) \cdot (-K_S)
=(6-2d)\llcorner \frac{q_1-q_2}{2d-4}\lrcorner+(q_1-q_2).
\]
Thus,  Riemann--Roch implies (3).

Proposition~\ref{p-eff-nef-big} and (3) of Lemma \ref{l_ST} imply (4). 
(5) follows from (1), (2), (3) and the fact that if $q_2>0$ and $q_1\geq q_2$ then  $\llcorner \frac{q_1-q_2}{2d-4}\lrcorner=0$. 

Finally, we show (6). Assume that $q_2>0$ and $q_1<q_2$. 
If $d= q_2=3$, then $q_1=0$ and  (3) implies that $\chi(S,\MO_S(\llcorner \psi^*A\lrcorner))=0$. 
Otherwise, we have that $\llcorner \frac{q_1-q_2}{2d-4}\lrcorner=-1$ and (3) implies 
$\chi(S, \MO_S(\llcorner \psi^*A\lrcorner))=-q_1.$

Thus, in both cases, (1) and (2) imply (6). 
\end{proof}

\section{A cone construction}\label{s-cone}
The goal of this Section is to construct a cone over a del Pezzo surface, associated to an ample divisor which is not necessarily Cartier. Note that, for simplicity, although our construction works in higher generality, we only consider a special case which is needed to prove our main results.

Throughout this Section, we use the same notation as in 
Subsection~\ref{ss-KM-surface}. 
We fix an ample $\Z$-divisor $A$ on $T$ which satisfy the following 

\begin{assumption}\label{a-A}
There exist a $\Z$-divisor $B$ on  $S$ and positive integers $m_C$ 
such that 
\[
\psi^*A=B+\sum_{C \subset \Ex(\psi)} \frac{1}{m_C}C.
\]
\end{assumption}

In Section \ref{s-examples},
we will construct explicit examples of divisors $A$ on $T$  satisfying Assumption \ref{a-A}.

\subsection{A generalisation of $\mathbb P^1$-bundles}

Let 
\[
\mathcal A:=\bigoplus_{m=0}^{\infty}\mathcal A_m
\]
be the quasi-coherent graded $\MO_S$-algebra defined by 
\[
\begin{aligned}
\mathcal A_0&:=\MO_S\\
\mathcal A_1&:=\MO_S \oplus \MO_S(\psi^*A)\\
\mathcal A_2&:=\MO_S \oplus \MO_S(\psi^*A) \oplus \MO_S(2\psi^*A)\\
&\cdots \\
\mathcal A_m &:=\MO_S \oplus \MO_S(\psi^*A) \oplus \cdots \oplus \MO_S(m\psi^*A)\\
&\cdots
\end{aligned}
\]
where we equip $\mathcal A$ with the canonical multiplication. 
We define 
\[
\pi\colon X:=\Proj_S\,\mathcal A \to S,
\]
as in \cite[(3.1.3)]{egaii} (note that in \cite{egaii}, the notation $\Proj~\mathcal A$ is used instead of $\Proj_S~\mathcal A$). Recall that the exceptional locus of $\psi$ is the union of the $2d+1$ pairwise disjoint smooth curves $\Gamma, \ell_1,\dots,\ell_d,\ell'_1,\dots,\ell'_d$. 
For any such curve $C$, we denote by  $R_C$ the reduced part $\pi^{-1}(C)_{\red}$ of $\pi^{-1}(C)$. 
Let $E^X_i:=\pi^*E_i$ for any $i=1, \cdots, d$. 

Note that, locally around any point $s\in S$ which is not contained in $\Supp\{\psi^*A\}$, the morphism $\pi$ is a $\mathbb P^1$-bundle. 
Thus, we now study the morphism $\pi$ around a point $s\in \Supp\{\psi^*A\}$.

\medskip

\begin{nothing}[Zariski local description]\label{n-zariski-local}
Fix a closed point $s \in S$ such that $s\in C$ for some curve $C\subset \Ex(\psi)$. Then 
there exists an affine open neighbourhood $S^0=\Spec\,R$ of $s \in S$ such that $C=\Spec\,R/(f)$ 
for some $f \in R$ and 
\[
\begin{aligned}
\mathcal A_0|_{S^0}&=\MO_{S^0}\simeq R\\
\mathcal A_1|_{S^0}&=\MO_{S^0} \oplus \MO_{S^0}(\frac{1}{m_C}C)\simeq R^{\oplus 2}\\
&\cdots\\
\mathcal A_{m_C-1}|_{S^0}&=\MO_{S^0} \oplus \MO_{S^0}(\frac{1}{m_C}C) \oplus \cdots \oplus \MO_{S^0}(\frac{m_C-1}{m_C}C)\simeq R^{\oplus m_C}\\
\mathcal A_{m_C}|_{S^0}&=\MO_{S^0} \oplus \MO_{S^0}(\frac{1}{m_C}C) \oplus \cdots \oplus \MO_{S^0}(C)\simeq  R^{\oplus m_C} \oplus R[\frac{1}{f}]\\
&\cdots\\
\end{aligned}
\]
It follows that if $X^0:=\pi^{-1}(S^0)$, then 
\[
\mathcal A|_{S^0}=R[x, y, z]/(y^{m_C}-fz) \quad\text{and}\quad X^0=\Proj_R\, R[x, y, z]/(y^{m_C}-fz)
\]
where   
$x, y, z$ are homogeneous elements with $\deg x=\deg y=1$ and $\deg z=m_C$. 
Thus, $X^0$ is covered by the following three affine open subsets: 
\[
\begin{aligned}
D_+(x)&=\Spec\,R[y/x, z/x^{m_C}]/((y/x)^{m_C}-f(z/x^{m_C}))\\
&\simeq \Spec\,R[Y, Z]/(Y^{m_C}-fZ).\\
D_+(y)&=\Spec\,R[x/y, z/y^{m_C}]/(1-f(z/y^{m_C}))\simeq \Spec\,R[X, Z]/(1-fZ).\\
D_+(z)&=\Spec\,R[x^iy^j/z]_{i+j={m_C}}/(y^{m_C}/z-f).
\end{aligned}
\]
\end{nothing}

\begin{nothing}[Formally local description]\label{n-formal-local}
Fix a closed point $s \in S$ such that $s\in C$ for some curve $C\subset \Ex(\psi)$ and let $\widehat{R}$ be the formal completion of the local ring at $s$.  Let $R':=k[t_1, t_2]$ and 
\[
X^1:=\Proj\, R'[x, y, z]/(y^{m_C}-t_1z) \to \Spec R'.
\]
Then $X^0 \times_R \widehat{R} \simeq X^1 \times_{R'} \widehat{R'}$, 
where $\widehat{R'}$ is the formal completion at the origin. 
Thus, $X^1$ is covered by the following three open subsets: 
\[
\begin{aligned}
D_+(x) & \simeq \Spec\,R'[Y, Z]/(Y^{m_C}-t_1Z).\\
D_+(y)& \simeq \Spec\,R'[X, Z]/(1-t_1Z).\\
D_+(z)&=\Spec\,R'[x^iy^j/z]_{i+j={m_C}}/(y^{m_C}/z-t_1)\\
&\simeq \Spec\,k[t_2][x^iy^j/z]_{i+j={m_C}} 
\simeq \mathbb A^1_k \times_k D'
\end{aligned}
\]
where $D'=D_+(z') \subset \Proj \,k[x', y', z']=\mathbb P(1, 1, {m_C})$. 
\end{nothing}

\begin{lem}\label{l-X-CM}
$X$ is Cohen-Macaulay. 
\end{lem}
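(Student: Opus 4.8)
The plan is to prove Cohen--Macaulayness locally, since it is a local property and $X$ is already a $\mathbb{P}^1$-bundle (hence smooth, in particular Cohen--Macaulay) away from the preimages of the finitely many curves $C \subset \Ex(\psi)$. So I would fix a closed point $x \in X$ lying over a point $s$ in some $C \subset \Ex(\psi)$ and verify that the local ring $\MO_{X,x}$ is Cohen--Macaulay. The crucial reduction is that Cohen--Macaulayness can be checked after passing to the formal completion: a Noetherian local ring is Cohen--Macaulay if and only if its completion is. This lets me replace the somewhat opaque global algebra $\mathcal{A}$ by the explicit model furnished by the formally local description in \ref{n-formal-local}, namely $X^1 = \Proj\, R'[x,y,z]/(y^{m_C}-t_1 z)$ with $R'=k[t_1,t_2]$.

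Next I would examine the three affine charts of $X^1$ listed in \ref{n-formal-local} and check Cohen--Macaulayness of each. On $D_+(y) \simeq \Spec\,R'[X,Z]/(1-t_1 Z)$, the relation $1 = t_1 Z$ makes $t_1$ a unit, so this chart is smooth (it is even isomorphic to an open subset of affine space), hence Cohen--Macaulay. On $D_+(z)$, the formal description identifies it with $\mathbb{A}^1_k \times_k D'$ where $D'$ is an affine chart of the weighted projective space $\mathbb{P}(1,1,m_C)$; such weighted projective spaces are Cohen--Macaulay (they are normal and the affine cone is a hypersurface-type quotient singularity that is Cohen--Macaulay), and a product with $\mathbb{A}^1_k$ preserves this. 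The genuinely interesting chart is $D_+(x) \simeq \Spec\,R'[Y,Z]/(Y^{m_C}-t_1 Z)$. Here I would observe that the coordinate ring $k[t_1,t_2,Y,Z]/(Y^{m_C}-t_1 Z)$ is a hypersurface in the polynomial ring $k[t_1,t_2,Y,Z]$, cut out by the single nonzero equation $Y^{m_C}-t_1 Z$. Since a hypersurface in a regular ring is automatically a complete intersection and therefore Cohen--Macaulay, this chart is Cohen--Macaulay as well.

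Assembling these, every chart of $X^1$ is Cohen--Macaulay, so $X^1$ is Cohen--Macaulay, and via the isomorphism $X^0 \times_R \widehat{R} \simeq X^1 \times_{R'} \widehat{R'}$ together with descent of Cohen--Macaulayness along faithfully flat completion, the local rings of $X$ at points over $\Ex(\psi)$ are Cohen--Macaulay. Combined with smoothness over the complement, this gives that $X$ is Cohen--Macaulay everywhere.

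I expect the main obstacle to be the bookkeeping around the formal completion argument rather than any deep difficulty: one must be careful that the isomorphism in \ref{n-formal-local} faithfully transports the Cohen--Macaulay property, i.e. that checking the completion suffices and that the chart $D_+(x)$ of $X^1$ really does model the completion of $X$ at the relevant point. The weighted-projective chart $D_+(z)$ also requires a small amount of care to confirm Cohen--Macaulayness, though the product structure $\mathbb{A}^1_k \times_k D'$ makes this routine. In the end the heart of the matter is simply the hypersurface equation $Y^{m_C}-t_1 Z$, which displays $X$ locally as a hypersurface singularity and thereby delivers the Cohen--Macaulay conclusion immediately.
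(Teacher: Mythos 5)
Your proposal is correct and follows essentially the same route as the paper: the paper's proof also reduces to the formally local model $X^1$ of \eqref{n-formal-local} via faithfully flat descent of the Cohen--Macaulay property (citing \cite[Corollary of Theorem 23.3]{matsumura89}) and then declares the chart-by-chart verification easy, which is exactly the computation you carry out (hypersurface $Y^{m_C}-t_1Z$ on $D_+(x)$, smoothness on $D_+(y)$, and $\mathbb A^1_k\times_k D'$ on $D_+(z)$). The only cosmetic remark is that on $D_+(z)$ the cleanest justification is that $D'$ is a normal surface singularity, hence $S_2$ and therefore Cohen--Macaulay in dimension two, rather than your slightly looser appeal to quotient-singularity structure.
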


\begin{proof}
By the  faithfully flat descent property of being Cohen-Macaulay
\cite[Corollaly of Theorem 23.3]{matsumura89}, it is enough to show that   $X^1$, as defined  in \eqref{n-formal-local}, is Cohen-Macaulay. Thus, the claim follows easily. 
\end{proof}

\begin{lem}\label{l-equidim}
Any fibre of $\pi$ is irreducible and one-dimensional. 
\end{lem}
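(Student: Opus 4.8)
The plan is to prove Lemma~\ref{l-equidim} by reducing to the explicit local descriptions established in \eqref{n-zariski-local} and \eqref{n-formal-local}, treating separately the points of $S$ lying over $\Ex(\psi)$ and those away from it.

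First I would observe that away from $\Supp\{\psi^*A\}$ — equivalently, away from $\Ex(\psi)$, since $\{\psi^*A\}$ is supported on the exceptional curves by Assumption~\ref{a-A} — the morphism $\pi$ is a genuine $\mathbb P^1$-bundle, as already noted in the text immediately preceding \eqref{n-zariski-local}. Hence over such points the fibre is $\mathbb P^1_k$, which is visibly irreducible and one-dimensional. This disposes of the generic behaviour, and it only remains to analyse the fibres over a closed point $s \in C$ for some curve $C \subset \Ex(\psi)$.

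For a point $s \in C$, I would use the Zariski-local model from \eqref{n-zariski-local}: writing $C = \Spec R/(f)$ locally, the fibre $\pi^{-1}(s)$ is cut out in $X^0 = \Proj_R R[x,y,z]/(y^{m_C}-fz)$ by the maximal ideal $\mathfrak m_s$ of $R$. The key point is what happens to the value of $f$ at $s$. If $f(s) \neq 0$, then $s \notin C$, a contradiction; so $f(s) = 0$, and specialising the relation $y^{m_C} = fz$ to $s$ gives $y^{m_C} = 0$ in the fibre. Reading this off the three affine charts $D_+(x)$, $D_+(y)$, $D_+(z)$, I would check that $D_+(y)$ becomes empty in the fibre (since the relation $1 = fZ$ is incompatible with $f(s)=0$), while $D_+(x)$ specialises to $\Spec k[Y]/(Y^{m_C})$ and $D_+(z)$ specialises to a single point; the underlying reduced fibre is therefore a single rational curve (the reduced part $R_C$), so it is again irreducible and one-dimensional. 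Alternatively, and perhaps more cleanly, I would invoke the formally-local model $X^1$ of \eqref{n-formal-local}, whose chart $D_+(z) \simeq \mathbb A^1_k \times_k D'$ with $D' \subset \mathbb P(1,1,m_C)$ exhibits the fibre structure transparently after setting $t_1 = 0$.

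The main obstacle, and the step requiring the most care, is the non-reducedness of the fibre: the scheme-theoretic fibre over $s \in C$ carries nilpotents (the class $y$ satisfies $y^{m_C}=0$), so one must be careful to claim irreducibility of the fibre as a topological space (or of its reduction) rather than reducedness. I would therefore phrase the conclusion in terms of the reduced fibre and emphasise that the relation $y^{m_C} = fz$ defines, set-theoretically, a single irreducible curve over each point of $C$ — precisely the curve $R_C = \pi^{-1}(C)_{\red}$ restricted to the fibre. Once the chart computations are organised this way, the irreducibility and one-dimensionality of every fibre follow uniformly, and the lemma is complete.
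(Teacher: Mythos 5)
Your overall strategy --- splitting off the $\mathbb P^1$-bundle locus and reducing to the local models of (\ref{n-zariski-local}) --- is the same as the paper's, but your chart computations at a point $s\in C$ are wrong, and as written they contradict the very statement being proved. With $K:=k(s)$, the scheme-theoretic fibre is $\Proj\,K[x,y,z]/(y^{m_C})$, so its chart $D_+(x)$ is $\Spec\,K[Y,Z]/(Y^{m_C})$, an affine line with nilpotents: you dropped the variable $Z$ and wrote the fat point $\Spec\,k[Y]/(Y^{m_C})$, which is zero-dimensional. Likewise the chart $D_+(z)$ is $\Spec\,K[x^iy^j/z]_{i+j=m_C}/(y^{m_C}/z)$, which is \emph{not} a single point: its reduction is the polynomial ring $K[x^{m_C}/z]$, again an affine line, and even this requires an argument --- every generator $x^iy^j/z$ with $j\neq 0$ is nilpotent, and the resulting surjection $K[X]\to K[x^iy^j/z]_{i+j=m_C}/\sqrt{(y^{m_C}/z)}$ is injective because a nontrivial kernel would force the chart to be zero-dimensional; this dimension count is precisely the content of the map $\theta$ in the paper's proof. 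Taken literally, your two charts describe a zero-dimensional set, from which ``a single rational curve'' cannot follow; the correct reduced fibre is $\Proj\,K[x,z]=\mathbb P_K(1,m_C)\simeq\mathbb P^1_K$, glued from two affine lines.

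Even with the charts corrected, a genuine step is missing: knowing that $D_+(x)$ and $D_+(z)$ are each irreducible does not make their union irreducible unless the two charts meet, i.e.\ unless the fibre is connected. The paper secures this \emph{first}, by proving $\pi_*\MO_X=\MO_S$ via the Stein factorisation and Zariski's main theorem (using that $\pi$ is generically a $\mathbb P^1$-bundle and $S$ is normal), and only then argues chart by chart; your proposal never addresses connectedness. (Alternatively, one can bypass both issues by observing that the reduction of $\Proj\,K[x,y,z]/(y^{m_C})$ is $\Proj\,K[x,z]$, the $\Proj$ of a graded domain, hence irreducible --- but this has to be said.) A smaller slip: $\Supp\{\psi^*A\}$ is in general strictly contained in $\Ex(\psi)$ (e.g.\ in Notation~\ref{n-plt} the curves $\ell_j,\ell_j'$ with $j>q+1$ are $\psi$-exceptional but do not occur in $\{\psi^*A\}$), so your ``equivalently'' is false, though harmless here since the local model of (\ref{n-zariski-local}) applies to every $C\subset\Ex(\psi)$.
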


\begin{proof}
We first show that $\pi_*\MO_X=\MO_S$. 
Let 
\[
\pi\colon X \to S' \to S
\]
be the Stein factorisation of $\pi$. 
Note that $S'$ is a projective normal variety. 
Since there is a non-empty open subset of $S$ over which $\pi$ is a $\mathbb P^1$-bundle, 
it follows that $S' \to S$ is birational. 
Since $S$ is normal, Zariski main theorem implies that $S' \to S$ is an isomorphism. 
Thus, $\pi_*\MO_X=\MO_S$.

Let $s \in S$ be a closed point and let $K:=k(s)$  be the residue field at $s \in S$. 
If $s \not\in \Supp\{\psi^*A\}$, then the fibre $X_s$ is isomorphic to $\mathbb P^1_K$. 
Thus, we may assume that $s \in \Supp\{\psi^*A\}$ and in particular $s \in C$ for some curve $C\subset \Ex(\psi)$. We use the same notation as in (\ref{n-zariski-local}): 
\[
s\in S^0=\Spec\,R,\quad X^0=\Proj_R\, R[x, y, z]/(y^{m_C}-fz).
\]
Since $s \in C$, the image of $f$ in $K$ is zero. 
Thus, we have 
\[
X_K := \Proj_R\,R[x, y, z]/(y^{m_C}-fz) \times_R K \simeq \Proj\, K[x, y, z]/(y^{m_C}).
\]
Note that $D_+(y)$ is empty. Since $X_K$ is connected, it is enough to show that  
both  $D_+(x)$ and $D_+(z)$ are irreducible and one-dimensional. 

Since 
\[
D_+(x) \simeq \Spec\, K[Y, Z]/(Y^{m_C}),
\]
it follows that $D_+(x)$ is irreducible and one-dimensional. 
We have 
\[
D_+(z) \simeq \Spec\, K[x^iy^j/z]_{i+j={m_C}}/(y^{m_C}/z).
\]
Then 
\[
x^iy^j/z \in \sqrt{(y^{m_C}/z)}
\]
if $i+j={m_C}$ and $j \neq 0$. 
Thus, the $K$-algebra homomorphism 
\[
\theta\colon K[X] \to K[x^iy^j/z]_{i+j=m_C}/\sqrt{(y^{m_C}/z)}, \quad X \mapsto x^{m_C}/z
\]
is surjective. 
If $\Ker\,\theta \neq 0$, then $\dim K[X]/\Ker\,\theta=0$, 
which is a contradiction. 
Therefore, $\theta$ is an isomorphism and, in particular,  $D_+(z)$ is irreducible and one-dimensional, as claimed. 
\end{proof}

\begin{cor}\label{c-flat}
$\pi\colon X \to S$ is flat.
\end{cor}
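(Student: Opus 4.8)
The plan is to prove that $\pi\colon X \to S$ is flat by combining the structural results already established: $X$ is Cohen--Macaulay (Lemma~\ref{l-X-CM}), $S$ is smooth (hence regular), and every fibre of $\pi$ is one-dimensional (Lemma~\ref{l-equidim}). The natural tool here is the \emph{miracle flatness} criterion: if $f\colon X \to S$ is a morphism of finite type with $X$ Cohen--Macaulay and $S$ regular, and if every fibre has the same dimension $\dim X - \dim S$, then $f$ is flat (see, e.g., \cite[Theorem~23.1]{matsumura89}). Thus the argument should be essentially immediate once the hypotheses are verified, and indeed $\pi$ fits this framework perfectly.

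First I would record the relevant dimensions. Since $S$ is a smooth surface, $\dim S = 2$, and the generic fibre of $\pi$ is $\mathbb P^1$, so $\dim X = 3$. By Lemma~\ref{l-equidim}, every fibre of $\pi$ is irreducible and one-dimensional, so every fibre has dimension exactly $1 = \dim X - \dim S$. This is the key equidimensionality input that miracle flatness requires. Next I would invoke that $S$ is regular (it is smooth over $k$, hence all its local rings are regular local rings) and that $X$ is Cohen--Macaulay by Lemma~\ref{l-X-CM}.

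With these three facts assembled, I would apply miracle flatness directly at each point. Concretely, for a closed point $x \in X$ with image $s = \pi(x)$, the local rings $\MO_{S,s}$ are regular of dimension equal to the codimension of $s$, and $\MO_{X,x}$ is Cohen--Macaulay; the fibre-dimension computation guarantees that $\dim \MO_{X,x} = \dim \MO_{S,s} + \dim \MO_{X_s, x}$, which is the numerical equality that the criterion needs. The conclusion is that $\MO_{X,x}$ is flat over $\MO_{S,s}$ for all $x$, and therefore $\pi$ is flat.

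I do not anticipate any serious obstacle, since both Cohen--Macaulayness of $X$ and equidimensionality of the fibres are already in hand from the two preceding lemmas, and smoothness of $S$ is built into the construction. The only point requiring mild care is confirming that the hypothesis on fibre dimension in the version of miracle flatness one cites is stated for all fibres (including those over non-closed points) or, equivalently, that one works with the local-ring formulation; this is a routine verification. Hence the proof should be a short citation of miracle flatness together with the dimension bookkeeping above.
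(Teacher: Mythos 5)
Your proof is correct and is essentially identical to the paper's: the paper also deduces flatness from $X$ being Cohen--Macaulay (Lemma~\ref{l-X-CM}), $S$ being smooth, and the equidimensionality of the fibres (Lemma~\ref{l-equidim}), citing \cite[Theorem 23.1]{matsumura89} (miracle flatness). Your dimension bookkeeping simply spells out the details the paper leaves implicit.
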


\begin{proof}
By Lemma~\ref{l-X-CM},  $X$ is Cohen-Macaulay. Since $S$ is smooth, 
the claim follows from Lemma~\ref{l-equidim} and \cite[Theorem 23.1]{matsumura89}. 
\end{proof}

\begin{nothing}[Negative section]\label{n-negative}
We consider the graded $\MO_S$-algebra 
\[
\mathcal B^-=\MO_S \oplus \MO_S \oplus \cdots=\MO_S[t].
\]
There is a natural surjection 
\[
\mathcal A \to \mathcal B^-
\]
such that, if $s\in S$ is a closed point such that $s\in C$ for some curve $C\subset \Ex(\psi)$ and $S^0=\Spec R$ is an affine open neighbourhood of $s$, then using the same notation as in 
\eqref{n-zariski-local}, we have 
\[
\mathcal A|_{S^0}=R[x, y, z]/(y^{m_C}-fz) \to R[t]=\mathcal B^-|_{S^0}.
\]
where  $x \mapsto t$, $y\mapsto 0$, $z \mapsto 0$. 

This induces a closed immersion over $S$
\[
S^-:=\Proj_S\, \mathcal B^- \hookrightarrow \Proj_S \,\mathcal A=X
\]
such that  $S^-$ is a section of $\pi\colon \Proj_S \,\mathcal A=X \to S$. 

In particular, if we denote $X^0=\Proj_R\,R[x, y, z]/(y^{m_C}-fz)$ as in \eqref{n-zariski-local}, 
then $S^-\cap X^0$ is given by $\{y=z=0\}\subset X^0$. 
\end{nothing}

\begin{nothing}[Positive section]\label{n-positive}
We now consider the graded $\MO_S$-algebra: 
\[
\mathcal B^+=\MO_S \oplus \MO_S(\psi^*A) \oplus \MO_S(2\psi^*A) \oplus 
\MO_S(3\psi^*A)\oplus \cdots.
\]

There is a natural surjection 
\[
\mathcal A \to \mathcal B^+
\]
such that, if $s\in S$ is a closed point such that $s\in C$ for some curve $C\subset \Ex(\psi)$ and $S_0=\Spec R$ is an affine open neighbourhood of $s$, then using the same notation as in 
\eqref{n-zariski-local}, we have 
\[
\mathcal A|_{S^0}=R[x, y, z]/(y^{m_C}-fz) \to R[u, v]/(u^{m_C}-fv)=\mathcal B^+|_{S^0}.
\]
where  $x \mapsto 0$, $y\mapsto u$, $z \mapsto v$. 

This induces a closed immersion over $S$
\[
S^+:=\Proj_S \,\mathcal B^+ \hookrightarrow \Proj_S \,\mathcal A=X.
\]
In particular, if we denote $X^0=\Proj_R\, R[x, y, z]/(y^{m_C}-fz)$ as in \eqref{n-zariski-local}, 
then  $S^+\cap X^0$ is given by $\{x=0\}\subset X^0$.

\begin{lem}$S^+$ is a section of $\pi\colon X \to S$. 
\end{lem}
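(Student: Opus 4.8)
The plan is to show that the structure morphism $\pi|_{S^+}\colon S^+=\Proj_S\,\mathcal B^+ \to S$ is an isomorphism. Since the surjection $\mathcal A\to\mathcal B^+$ of \eqref{n-positive} realises $S^+\hookrightarrow X$ as a closed immersion over $S$, and hence $\pi|_{S^+}$ coincides with the structure morphism of $\Proj_S\,\mathcal B^+$, this is exactly the assertion that $S^+$ is a section of $\pi$. As the statement is local on $S$, I would work over an affine open $S^0=\Spec\,R$ and distinguish two cases according to whether $S^0$ meets $\Supp\{\psi^*A\}$.

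Over the open locus $S\setminus \Supp\{\psi^*A\}$, where $\psi^*A$ is Cartier and $\pi$ is a $\mathbb P^1$-bundle, $\mathcal B^+$ is the symmetric algebra of a line bundle, so its relative $\Proj$ is the base and $\pi|_{S^+}$ is visibly an isomorphism there. The content of the lemma is therefore concentrated at a closed point $s\in C$ for some curve $C\subset\Ex(\psi)$, where I would use the Zariski-local description of \eqref{n-zariski-local}. There the surjection $\mathcal A\to\mathcal B^+$ reads
\[
\mathcal A|_{S^0}=R[x,y,z]/(y^{m_C}-fz)\twoheadrightarrow R[u,v]/(u^{m_C}-fv)=\mathcal B^+|_{S^0},
\]
with $\deg u=1$ and $\deg v=m_C$, so that $S^+\cap X^0=\Proj_R\,R[u,v]/(u^{m_C}-fv)$.

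The heart of the argument is then the computation of the two standard affine charts of this $\Proj$. On $D_+(v)$ the degree zero subring is generated by $u^{m_C}/v$, and the defining relation forces $u^{m_C}/v=f$; hence $D_+(v)\simeq \Spec\,R[f]=\Spec\,R=S^0$ and $\pi|_{S^+}$ restricts to the identity. On $D_+(u)$ the degree zero subring is generated by $v/u^{m_C}=1/f$, so $D_+(u)\simeq \Spec\,R[1/f]$ is precisely the open subset $\{f\neq 0\}$ of $\Spec\,R$, which sits inside $D_+(v)$. Gluing the two charts yields $S^+\cap X^0\simeq \Spec\,R=S^0$ compatibly with $\pi$, and patching these local isomorphisms gives that $\pi|_{S^+}$ is an isomorphism.

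The main subtlety to keep in mind is that $\mathcal B^+$ is \emph{not} generated in degree one, because $\psi^*A$ is not Cartier and $v$ has degree $m_C>1$; so one cannot simply invoke the $\mathbb P^1$-bundle picture globally. The key mechanism is that the relation $u^{m_C}=fv$ collapses the chart $D_+(v)$ onto the base, identifying the would-be fibre coordinate $u^{m_C}/v$ with the function $f\in R$. As a sanity check one may instead pass to the $d$-th Veronese subalgebra $(\mathcal B^+)^{(d)}$ for $d$ divisible by every $m_C$: then $d\psi^*A$ is Cartier, $(\mathcal B^+)^{(d)}\simeq\bigoplus_m \MO_S(d\psi^*A)^{\otimes m}$ is the symmetric algebra of a line bundle, and the standard isomorphism $\Proj_S\,\mathcal B^+\simeq\Proj_S\,(\mathcal B^+)^{(d)}$ recovers $S^+\simeq S$.
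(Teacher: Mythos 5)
Your proof is correct and follows essentially the same route as the paper: both reduce to the Zariski-local chart $\Proj_R\,R[u,v]/(u^{m_C}-fv)$ and observe that the relation forces the degree-zero coordinate $u^{m_C}/v$ on $D_+(v)$ to equal $f$, so that $D_+(u)\subset D_+(v)\simeq \Spec\,R$ and the whole local piece maps isomorphically to $S^0$. Your extra remarks (the $\mathbb P^1$-bundle locus and the Veronese sanity check) are sound but not needed beyond the paper's argument.
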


\begin{proof}
It is enough to prove the result locally. Thus, it suffices to show that $S^+\cap X^0$ is a section of $\pi|_{X^0}\colon X^0\to S^0$. 
We may write  
\[
S^+ \cap X^0=\Proj_R\, (R[y, z]/(y^{m_C}-fz)) \subset \mathbb P_R(1, m_C) \simeq \mathbb P^1_R
\]
which is covered by the affine open sets $D_+(y)$ and $D_+(z)$. 
Clearly, $\{z=0\} \cap (S^+\cap X^0)=\emptyset$.

Therefore, we get natural isomorphisms: 
\[
\begin{aligned}
S^+|_{X^0} & =D_+(y) \cup D_+(z)=D_+(z)\\
& \simeq \Spec\,R[y^{m_C}/z]/(y^{m_C}/z-f)\\
&=\Spec\,R[Y]/(Y-f) \simeq \Spec\,R=S^0.
\end{aligned}
\]
Thus,  the natural morphism $S^+ \to S$ is an isomorphism. 
\end{proof}
\end{nothing}

\subsection{Resolution}\label{ss-resolution}

The purpose of this subsection is to describe an explicit resolution of singularities for $X$.

By the formally local description (\ref{n-formal-local}), we see that the singular locus $\Sing\, X$ of $X$ 
can be written as   
\[
\Sing\, X=\bigcup_{C \subset \Supp \{\psi^*A\}}(C^+ \cup C^-),
\]
where, for any curve $C\subset \Supp \{\psi^*A\}$,  we denote by $C^{\pm}$  the integral scheme which is set-theoretically equal to $R_C \cap S^{\pm}$. 
 By (\ref{n-formal-local}), we also have:
\begin{itemize}
\item Up to formal completion, locally around any point  $s\in C^+$, $X$ is isomorphic to $\mathbb A_k^1\times_k D'$, 
where $D'=D_+(z') \subset \Proj\,k[x', y', z']=\mathbb P(1, 1, m_C)$. 
\item Up to formal completion, locally around any point  $s\in C^-$, $X$ is isomorphic  to 
the direct product of $\mathbb A_k^1$ and a surface with a canonical $A_{m_C-1}$-singularity. 
\end{itemize}

Thus, we can construct a resolution of $X$ by considering the minimal resolution of a surface singularity. 
More precisely, we construct a resolution 
\[
\mu\colon \widetilde X \to X
\]
as follows.
For any $C \subset \Supp\{\psi^*A\}$, we first take the blow-up of $X$ along $C^+$. 
Then we take the blow-up of $X$ along $C^-$, $\ulcorner\frac{m_C-1}{2}\urcorner$ times. 
For any prime divisor $D$ on $X$, we denote by $\widetilde D$, its proper transform on $\widetilde X$. 
Let $F_C^+$ be the unique $\mu$-exceptional prime divisor over $C^+$ and 
let $F^-_1, F^-_2, \cdots, F^-_{m_C-1}$ be the   $\mu$-exceptional prime divisors over $C^-$
whose (extended) dual graph is given by: 
\[
\widetilde S^--F^-_1-F^-_2-\cdots-F^-_{m_C-1}-\widetilde R_C.
\]

\begin{lem}\label{l-X-rational}
The following  hold:
\begin{enumerate}
\item  $H^i(X, \MO_X)=0$  for any $i>0$. 
\item 
Given any proper birational morphism $h\colon W \to X$ from a smooth proper threefold $W$, we have $R^ih_*\MO_W=0$  for any $i>0$.
\end{enumerate}
\end{lem}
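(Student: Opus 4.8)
The plan is to prove the two cohomology vanishing statements for $X = \Proj_S \mathcal{A}$ by reducing everything to the explicit resolution $\mu\colon \widetilde{X} \to X$ constructed in Subsection~\ref{ss-resolution}. For part (1), the statement $H^i(X, \MO_X) = 0$ for $i > 0$, I would first recall that $\pi\colon X \to S$ has $\pi_* \MO_X = \MO_S$ (proved in Lemma~\ref{l-equidim}) and use the Leray spectral sequence for $\pi$. Since the fibres of $\pi$ are one-dimensional (Lemma~\ref{l-equidim}), only $R^0\pi_*$ and $R^1\pi_*$ can be nonzero. The key computation is to show $R^1\pi_* \MO_X = 0$: away from $\Supp\{\psi^*A\}$ the map $\pi$ is a $\mathbb{P}^1$-bundle so $R^1\pi_* \MO_X = 0$ there, and over the special curves $C$ the fibre is $\Proj K[x,y,z]/(y^{m_C})$, whose structure-sheaf cohomology I would compute directly (the fibre is a non-reduced but connected genus-zero-like curve with $H^1 = 0$). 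Granting $R^1\pi_*\MO_X = 0$, the Leray sequence collapses to $H^i(X,\MO_X) \simeq H^i(S, \MO_S)$, and since $S$ is a smooth rational surface these vanish for $i > 0$.

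\medskip

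For part (2), the statement that $R^i h_* \MO_W = 0$ for every resolution $h\colon W \to X$ and $i > 0$, the natural strategy is to prove that $X$ has rational singularities. Because rationality is independent of the chosen resolution (any two resolutions are dominated by a common one, and the comparison uses that $W$ is smooth hence Cohen--Macaulay), it suffices to verify $R^i \mu_* \MO_{\widetilde{X}} = 0$ for $i > 0$ for the one explicit resolution $\mu\colon \widetilde{X} \to X$ built in Subsection~\ref{ss-resolution}. This is where I would invoke the birational Kawamata--Viehweg vanishing of Proposition~\ref{p-birat-kvv}: the morphism $\mu$ is a projective birational morphism from a smooth threefold, its exceptional divisors $F_C^+$ and $F_1^-, \dots, F_{m_C-1}^-$ have the explicit $A_{m_C-1}$-type dual graph recorded above, and each exceptional prime divisor is a smooth (or mildly singular, hence easily handled) surface on which KVV holds. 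I would check hypotheses (a)--(c) of Proposition~\ref{p-birat-kvv}: that one can choose an effective simple normal crossing divisor supported on the exceptional locus with $-E$ being $\mu$-ample, and that KVV holds for the restriction to each exceptional divisor. Applying the Proposition with $\Delta = 0$ and $D = K_{\widetilde{X}}$ (so that $D - K_{\widetilde{X}} = 0$ is $\mu$-ample after the usual perturbation) gives $R^i\mu_* \omega_{\widetilde{X}} = 0$; combined with the local Cohen--Macaulay structure from the formally local description \eqref{n-formal-local} and Lemma~\ref{l-X-CM}, this yields $R^i\mu_*\MO_{\widetilde{X}} = 0$.

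\medskip

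The main obstacle I anticipate is the verification that $-E$ can be arranged to be $\mu$-ample for a suitable effective SNC divisor $E$ supported on $\Ex(\mu)$, i.e.\ hypothesis (a) of Proposition~\ref{p-birat-kvv}. The exceptional locus consists of the single divisor $F_C^+$ over $C^+$ and a chain $F_1^-, \dots, F_{m_C-1}^-$ over the $A_{m_C-1}$-singularity along $C^-$, and one must produce integer coefficients $e_i > 0$ so that the associated divisor has anti-ample restriction to every $\mu$-contracted curve. For the $A_{m_C-1}$ chain this is the standard fact that one can find a $\mu$-anti-ample exceptional divisor (the intersection matrix of the chain is negative definite), but I would need to combine this with the contribution from $F_C^+$ and check the gluing along the product structure $\mathbb{A}^1_k \times_k D'$ of the formally local description. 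A secondary technical point is confirming hypothesis (b), namely that KVV holds for each exceptional divisor together with its inclusion into $\widetilde{X}$; since these are resolutions of surface-type singularities fibred over a curve, this should reduce to the surface-case KVV (as in \cite[Theorem~10.4]{kollar13}) used already in the proof of Theorem~\ref{t-KM-h1}, but making the simple normal crossing and coefficient bookkeeping precise along the full exceptional configuration is the step most likely to require care.
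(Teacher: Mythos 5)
Your part (1) is a correct alternative route, and genuinely different from the paper's: the paper deduces (1) \emph{from} (2), using that $X$ is a rational variety (so $H^i(W,\MO_W)=0$ for a smooth proper model $W$, again by \cite{CR11}) together with the Leray spectral sequence, whereas you argue directly along $\pi$. Your fibre computation does work: over a point of $C$ the fibre is $\Proj\,K[x,y,z]/(y^{m_C})$, and filtering by powers of $y$ gives graded pieces isomorphic to $\MO_{\mathbb P(1,m_C)}(-j)$ for $0\le j\le m_C-1$, all of which have vanishing $H^1$ (on $\mathbb P(1,m)$ one has $H^1(\MO(-j))\neq 0$ only for $j\ge m+1$); combined with flatness (Corollary~\ref{c-flat}) and cohomology and base change this yields $R^1\pi_*\MO_X=0$, hence $H^i(X,\MO_X)\simeq H^i(S,\MO_S)=0$. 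This is more self-contained than the paper's derivation of (1).

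The genuine gap is in part (2), at the step ``rationality is independent of the chosen resolution (any two resolutions are dominated by a common one, and the comparison uses that $W$ is smooth hence Cohen--Macaulay).'' That comparison requires $R^ig_*\MO_V=0$ for a proper birational morphism $g\colon V\to W$ between \emph{smooth} threefolds, and in characteristic two this is not elementary: the characteristic-zero proofs go through Kawamata--Viehweg or Grauert--Riemenschneider vanishing, which fail here (indeed the failure of KVV in characteristic two is the theme of this very paper), and Cohen--Macaulayness of $W$ gives you nothing towards it. The statement you need is precisely the theorem of Chatzistamatiou--R\"ulling \cite[Theorem~1]{CR11}, which is exactly what the paper invokes: its proof of Lemma~\ref{l-X-rational} merely observes that $R^i\mu_*\MO_{\widetilde X}=0$ follows from the formal-local description --- formally locally $X$ is $\mathbb A^1_k$ times a klt (hence rational) surface singularity and $\mu$ is the corresponding minimal resolution, so no appeal to Proposition~\ref{p-birat-kvv} is needed --- and then quotes \cite{CR11} to pass from the single resolution $\mu$ to an arbitrary $h\colon W\to X$. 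Your KVV-plus-duality argument for $R^i\mu_*\MO_{\widetilde X}=0$ can be made to work for that one resolution (note you also need $\mu_*\omega_{\widetilde X}=\omega_X$, which holds by Lemma~\ref{l-resolution}(3) since all discrepancies are $>-1$, together with Lemma~\ref{l-X-CM} and Grothendieck duality), but without \cite{CR11} or a substitute for it, the extension of the vanishing to every smooth proper model $W$ --- which is what the statement of (2) asserts --- remains unproven in your proposal.
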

 
\begin{proof} 
By the description above, we have that $R^i\mu_*\MO_{\widetilde X}=0$ for any $i>0$. 
Thus, (2) follows from \cite[Theorem~1]{CR11}.
Since $X$ is a rational variety, (2) implies  (1).
\end{proof}

\begin{lem}\label{l-X-Qfac}
$X$ is $\Q$-factorial. 
\end{lem}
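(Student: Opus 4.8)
The plan is to reduce the $\mathbb Q$-factoriality of $X$ to that of the smooth resolution $\widetilde X$ constructed in Subsection~\ref{ss-resolution}, and then to exploit the explicit structure of the exceptional divisors of $\mu\colon \widetilde X \to X$. Since $\widetilde X$ is smooth (and in particular $\mathbb Q$-factorial), every Weil divisor on $\widetilde X$ is $\mathbb Q$-Cartier; the issue is to descend this property along $\mu$. Concretely, given a prime divisor $D$ on $X$, I would take its proper transform $\widetilde D$ on $\widetilde X$ and seek a $\mu$-exceptional $\mathbb Q$-divisor $\sum a_j F_j$ (running over the exceptional primes $F_C^+$ and $F^-_1,\dots,F^-_{m_C-1}$ for each $C \subset \Supp\{\psi^*A\}$) such that $\widetilde D + \sum a_j F_j$ is numerically trivial on every $\mu$-fibre. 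Once such a combination is found, Keel's theorem together with the rationality of $X$ (Lemma~\ref{l-X-rational}, giving $R^i\mu_*\MO_{\widetilde X}=0$) should let me conclude that a multiple of $\widetilde D + \sum a_j F_j$ is the pullback of a $\mathbb Q$-Cartier divisor from $X$, forcing $D$ itself to be $\mathbb Q$-Cartier.

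The key step is therefore a \emph{local} linear-algebra computation of the intersection matrix. Because the construction of $\mu$ is a fibre product of the minimal resolution of an $A_{m_C-1}$-surface singularity with $\mathbb A^1_k$ (for the component over $C^-$) and a single blow-up (over $C^+$), the relevant intersection form is, up to the factor $\mathbb A^1$, exactly the negative-definite intersection matrix of the resolution chain $\widetilde S^- - F^-_1 - \cdots - F^-_{m_C-1} - \widetilde R_C$ displayed in the dual graph, together with the single divisor $F_C^+$. First I would verify that the $\mu$-fibres over points of $C^+$ and $C^-$ are spanned by the classes of these exceptional curves, so that numerical triviality over the base is detected by pairing with finitely many fibral curves. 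Then, since the intersection matrix of an ADE chain is negative definite (hence invertible over $\mathbb Q$), for any $D$ the required coefficients $a_j$ exist and are unique; this is the formal mechanism that produces the $\mathbb Q$-Cartier correction term.

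The main obstacle I anticipate is handling the \emph{global} passage from numerical triviality on fibres to $\mathbb Q$-Cartierness on $X$, rather than the local intersection theory, which is routine once the resolution is laid out. In characteristic zero one would invoke a base-point-free or relative vanishing argument, but here the clean tool is Keel's theorem, which requires the contracted divisor to be semiample and the relevant $R^1$ to vanish. The vanishing $R^1\mu_*\MO_{\widetilde X}=0$ is exactly what Lemma~\ref{l-X-rational} supplies, so the structure mirrors the proof of Lemma~\ref{l-Qfac-criterion}. I would thus model the argument on that lemma: show the corrected divisor is $\mu$-numerically trivial, deduce from the semiampleness hypothesis (guaranteed by the fibres being rational curves and the $g$-semiampleness criterion) that it is $\mu$-semiample, and conclude it descends. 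The delicate point requiring care is that $X$ is singular along $C^+\cup C^-$, so I must make sure that the descended divisor is $\mathbb Q$-Cartier near these singular loci and not merely away from them; this is where the explicit formal-local description in \eqref{n-formal-local} becomes essential, since it pins down the local class group and confirms that the exceptional correction suffices.
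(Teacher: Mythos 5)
Your plan is correct in outline, but it takes a genuinely different route from the paper. The paper's proof is purely local: by faithfully flat descent (Matsumura (24.E)) it suffices to show that $\Spec\,\widehat{\MO}_{X,x}$ is $\Q$-factorial for each closed point $x$, and the formal-local description (\ref{n-formal-local}) identifies this completion with that of a product of $\mathbb A^1_k$ with a klt surface singularity, so the known surface argument applies; no global geometry of $\mu$ is used at all. You instead argue globally on the resolution $\mu\colon \widetilde X \to X$ of Subsection~\ref{ss-resolution}: correct the proper transform $\widetilde D$ by an exceptional $\Q$-combination, which is solvable because the fibrewise intersection matrices of the chains $F^-_1,\dots,F^-_{m_C-1}$ (type $A_{m_C-1}$) and of the single divisor $F^+_C$ (a $(-m_C)$-curve fibrewise) are invertible, and then descend via Keel's theorem. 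This can be made to work, with two caveats. First, Lemma~\ref{l-Qfac-criterion} cannot be invoked verbatim, since $\Ex(\mu)$ has several components, each contracted to a curve rather than a point; you must re-run its proof, namely the Kodaira-lemma induction showing $\widetilde D'+m\mu^*H$ is nef for $m\gg 0$, and the verification that its restriction to each chain of exceptional Hirzebruch surfaces is the pullback of an ample bundle from $C^{\pm}\simeq \mathbb P^1$, hence semi-ample on the whole (reducible) null locus — here Lemma~\ref{l-resolution}(1) and the coincidence of linear and numerical equivalence on Hirzebruch surfaces do the work, with a short gluing argument along the chain. Second, your appeal to $R^1\mu_*\MO_{\widetilde X}=0$ (Lemma~\ref{l-X-rational}) is not needed on the Keel route: Keel's theorem requires only nefness and semi-ampleness on the exceptional locus, and once one has $m\widetilde D'=\mu^*N$ with $N$ Cartier, pushing forward gives that $mD=N$ is Cartier everywhere, including along $C^+\cup C^-$, so your closing worry about local class groups is likewise superfluous (the $R^1$ vanishing would only be needed for an alternative Artin-style formal-functions descent). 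As for what each approach buys: the paper's is much shorter, piggybacking on the surface case through the product structure, while yours avoids completions and flat descent entirely and would still apply in situations where no formal-local product decomposition is available.
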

 
\begin{proof}
Fix a closed point $x \in X$. 
By \cite[(24. E)]{matsumura80}, it is enough to show that $\Spec\,\widehat{\MO}_{X, x}$ is $\Q$-factorial. 
By \eqref{n-formal-local}, $\Spec\,\widehat{\MO}_{X, x}$ is the completion of 
the direct product of $\mathbb A^1_k$ and a surface klt singularity. 
Thus, we can apply the same proof as in the case of a  surface  
(e.g. see  the proof of \cite[Theorem~5.3]{tanaka12}). 
\end{proof}

\begin{lem}\label{l-pic-X}
The following  hold:
\begin{enumerate}
\item 
Let $\zeta$ be a fibre of $\pi$. 
Then the  sequence 
\[
0 \to \Pic (S)_{\Q} \xrightarrow{\pi^*} \Pic (X)_{\Q} \xrightarrow{\cdot \zeta} \Q \to 0
\]
is exact. 
\item 
$\rho(X)=\rho(S)+1$.
\end{enumerate}
\end{lem}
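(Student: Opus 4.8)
The plan is to show that the displayed three-term sequence is a complex in which $\pi^*$ is injective and $\cdot\,\zeta$ is surjective, to establish exactness in the middle, and then to deduce (2) by taking $\Q$-dimensions. Throughout I write $\iota\colon S\xrightarrow{\sim} S^-\hookrightarrow X$ for the section constructed in \eqref{n-negative}, so that $\pi\circ\iota=\mathrm{id}_S$. The formal parts are immediate: the identity $\iota^*\circ\pi^*=\mathrm{id}$ on $\Pic(S)_{\Q}$ shows $\pi^*$ is injective; the projection formula gives $\pi^*M\cdot\zeta=M\cdot\pi_*\zeta=0$ since $\zeta$ is contracted by $\pi$, so the sequence is a complex; and because $X$ is $\Q$-factorial (Lemma~\ref{l-X-Qfac}) the class $[S^-]$ lies in $\Pic(X)_{\Q}$ with $[S^-]\cdot\zeta=1$ (a section meets a general fibre $\mathbb P^1$ in one reduced point, and all fibres are numerically equivalent), so $\cdot\,\zeta$ is surjective.

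For middle exactness, take $L\in\Pic(X)_{\Q}$ with $L\cdot\zeta=0$ and set $L':=L-\pi^*\iota^*L$, so that $\iota^*L'=0$ and $L'\cdot\zeta=0$; it suffices to prove $L'=0$. Put $Z:=\Supp\{\psi^*A\}$, $V:=S\setminus Z$ and $U:=\pi^{-1}(V)$. Over $V$ the morphism $\pi$ restricts to a $\mathbb P^1$-bundle $p\colon U\to V$ admitting the section $\iota|_V$, so $\Pic(U)_{\Q}=p^*\Pic(V)_{\Q}\oplus\Q$, the second summand being detected by the degree on fibres. Since $L'|_U$ has fibre degree $L'\cdot\zeta=0$, we may write $L'|_U=p^*N$ for some $N\in\Pic(V)_{\Q}$; restricting along the section and using $\iota^*L'=0$ forces $N=0$, hence $L'|_U=0$.

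It remains to propagate this vanishing across $\pi^{-1}(Z)$. For each curve $C\subset Z$ the preimage $\pi^{-1}(C)$ is irreducible (the fibre over the generic point of $C$ is irreducible by the computation in the proof of Lemma~\ref{l-equidim}), with reduction the prime divisor $R_C$, and these are exactly the prime divisors of $X\setminus U$. The excision sequence for divisor class groups, together with $\Q$-factoriality, then gives
\[
\bigoplus_{C\subset Z}\Q\,R_C \longrightarrow \Pic(X)_{\Q}\longrightarrow \Pic(U)_{\Q}\longrightarrow 0,
\]
so $L'=\sum_{C}a_C R_C$ for some $a_C\in\Q$. The crucial observation is that each $R_C$ is itself a pullback up to scalar: as $C$ is Cartier on the smooth surface $S$ and $\pi$ is flat, $\pi^*C=e_C R_C$ as Weil divisors for some $e_C\in\Z_{>0}$, whence $R_C=\tfrac{1}{e_C}\pi^*C\in\pi^*\Pic(S)_{\Q}$. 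Thus $L'=\pi^*M''$, and applying $\iota^*$ yields $M''=\iota^*L'=0$, so $L'=0$ and $L=\pi^*\iota^*L$ lies in the image of $\pi^*$; this proves (1). For (2), the sequence shows $\dim_{\Q}\Pic(X)_{\Q}=\dim_{\Q}\Pic(S)_{\Q}+1$; since $S$ is a smooth rational surface, $\Pic(S)_{\Q}=N^1(S)_{\Q}$ and $\dim_{\Q}\Pic(S)_{\Q}=\rho(S)$, and the numerical computation $M\cdot\gamma=\pi^*M\cdot\iota_*\gamma$ shows that $\pi^*M\equiv 0$ forces $M\equiv 0$, so numerical and $\Q$-linear equivalence agree on $\Pic(X)_{\Q}$ and $\rho(X)=\rho(S)+1$.

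The main obstacle I anticipate is the excision step combined with the identification $R_C=\tfrac{1}{e_C}\pi^*C$: one must verify that $\pi^{-1}(C)$ is irreducible, so that $R_C$ is the only new prime divisor over $C$, and that the non-reduced fibre structure over $Z$ contributes nothing beyond a positive-integer multiple of a pulled-back Cartier class. A secondary subtlety is the passage from $\Pic_{\Q}$ to the Picard number in (2), where one needs that numerical equivalence on $X$ is detected after restriction to the section $S^-$.
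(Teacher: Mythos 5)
Your proof is correct and is essentially the paper's own argument written out in full: the paper's one-line justification of middle exactness invokes exactly the flatness of $\pi$ (Corollary~\ref{c-flat}) and the $\mathbb P^1$ fibre structure, for which your $\mathbb P^1$-bundle decomposition of $\Pic$ over $S\setminus \Supp\{\psi^*A\}$, the excision sequence combined with $\Q$-factoriality (Lemma~\ref{l-X-Qfac}), and the identification of each vertical divisor as $R_C=\frac{1}{m_C}\pi^*C$ (cf.\ Lemma~\ref{l-RC}(1)) are precisely the intended details, while your use of the section $S^-$ for injectivity, surjectivity and the passage from $\Pic_\Q$ to $\rho$ fills in what the paper dismisses as immediate. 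One cosmetic remark: avoid reusing the letter $Z$ for $\Supp\{\psi^*A\}$, since the paper reserves $Z$ for the target of $g\colon Y\to Z$.
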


\begin{proof}
We first show (1). 
Let $D$ be a Cartier divisor on $X$ such that $D \cdot \zeta=0$. 
It is enough to show that $D=\pi^*D_S$ for some $\Q$-divisor ${D_S}$ on $S$. 
This follows from Corollary~\ref{c-flat} and 
the fact that the generic fibre of $\pi$ is $\mathbb P^1_{K(S)}$.  
Thus, (1) holds and  (2) follows immediately. 
\end{proof}

\begin{lem}\label{l-resolution}
With the same notation as above, the following  hold:
\begin{enumerate}
\item Each $\mu$-exceptional prime divisors $F$ and $\widetilde R_C$ 
is a Hirzebruch surface, 
i.e. a $\mathbb P^1$-bundle over $\mathbb P^1$. 
\item 
The birational morphism $\mu$ is a log resolution of 
\[
\left(X, S^++S^-{+\sum_{C \subset \Ex(\psi)} R_C}\right).
\] 
\item $K_{\widetilde X}+\sum_{C \subset \Supp \{\psi^*A\}} \frac{m_C-2}{m_C}F_C^+=\mu^*K_X$. 
\item $\mu^*S^+=\widetilde S^++\sum_{C \subset \Supp \{\psi^*A\}}\frac{1}{m_C}F^+_C$. 
\item $\mu^*S^-=\widetilde S^-+\sum_{C \subset \Supp \{\psi^*A\}}(\frac{m_C-1}{m_C}F^-_1+\frac{m_C-2}{m_C}F^-_2+\cdots+\frac{1}{m_C}F^-_{m_C-1}).$
\item For any $C \subset \Supp \{\psi^*A\}$, 
\[
\mu^*R_C=\widetilde R_C+\frac{1}{m_C}F_C^++\frac{1}{m_C}F^-_1+\frac{2}{m_C}F^-_2+\cdots+\frac{m_C-1}{m_C}F^-_{m_C-1}.
\]
\end{enumerate}
\end{lem}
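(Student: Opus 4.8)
The plan is to compute everything from the explicit local model provided in \eqref{n-formal-local}, since the statement is entirely local over $S$ and all six assertions concern the behaviour of $\mu$ along the exceptional curves $C \subset \Supp\{\psi^*A\}$. First I would recall that away from such curves the morphism $\mu$ is an isomorphism, so it suffices to work formally locally at each $C$. By \eqref{n-formal-local}, up to formal completion $X$ near $C^+$ looks like $\mathbb A^1_k \times D'$ with $D' = D_+(z') \subset \mathbb P(1,1,m_C)$, while near $C^-$ it looks like $\mathbb A^1_k$ times an $A_{m_C-1}$-singularity. Since the blow-ups defining $\mu$ are performed along the curves $C^{\pm}$ (which in the local model are $\mathbb A^1_k$ times a point), each step is the product of $\mathbb A^1_k$ with a surface blow-up; so every $\mu$-exceptional divisor is $\mathbb P^1_k$ fibred over the base $\mathbb A^1_k$-direction, globalising to a $\mathbb P^1$-bundle over $\mathbb P^1$. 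This gives (1), once I check the same for $\widetilde R_C$, which is the proper transform of the ruled surface $R_C$ and hence also a Hirzebruch surface.

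For assertion (2), I would verify that after these blow-ups the total transform of $S^+ + S^- + \sum_C R_C$, together with all exceptional divisors, is simple normal crossing. This reduces to the surface picture: resolving the $A_{m_C-1}$-singularity by the minimal resolution produces the chain of exceptional curves whose dual graph is displayed in the statement, namely $\widetilde S^- - F_1^- - \cdots - F_{m_C-1}^- - \widetilde R_C$, and the single blow-up over $C^+$ introduces $F_C^+$ meeting $\widetilde S^+$ and $\widetilde R_C$ transversally. Simple normal crossing is then checked directly on the explicit affine charts of the surface resolution.

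The formulas (3)--(6) are then pure discrepancy and pullback computations on the resolved surface, taken product with $\mathbb A^1_k$. For (3) I would compute the log discrepancy of $F_C^+$: in the surface $\mathbb P(1,1,m_C)$ the point being resolved is a cyclic quotient singularity, and the single exceptional curve $F_C^+$ appears in $K_{\widetilde X} - \mu^* K_X$ with coefficient $\frac{m_C-2}{m_C}$ (the $A_{m_C-1}$ chain over $C^-$ is crepant, which is why only the $F_C^+$ terms appear). For (4)--(6) I would pull back the Weil divisors $S^+$, $S^-$, and $R_C$ using the local equations from \eqref{n-zariski-local}: the coefficients are read off by computing the order of vanishing of the defining sections along each exceptional divisor, equivalently by solving the linear system dictated by the dual graph and the requirement that $\mu^*(\cdot)$ be numerically trivial on $\mu$-exceptional curves. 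The ascending coefficients $\tfrac{1}{m_C}, \tfrac{2}{m_C}, \dots$ in (5) and (6) are exactly the standard fractional coefficients attached to the $A_{m_C-1}$ chain.

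The main obstacle will be keeping the bookkeeping correct across the two distinct local pictures at $C^+$ and $C^-$, and in particular justifying the coefficient $\frac{m_C-2}{m_C}$ in (3): one must confirm that the $A_{m_C-1}$ resolution over $C^-$ is genuinely crepant (so contributes nothing to the discrepancy) while only the $\mathbb P(1,1,m_C)$-type point over $C^+$ is non-crepant. I would verify this by an explicit adjunction computation on the surface charts $D_+(x), D_+(z)$ of \eqref{n-formal-local}, comparing $K_{\widetilde X}$ computed via the blow-up formula against $\mu^* K_X$, and cross-checking (4), (5), (6) for internal consistency by intersecting both sides with the $\mu$-exceptional curves and confirming numerical triviality.
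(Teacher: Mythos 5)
Your proposal is correct and follows essentially the same route as the paper: the paper likewise reduces all six assertions to the formal local model of \eqref{n-formal-local} via the faithfully flat base change $\Spec\,\widehat{\MO}_{S,s}\to S$, where the claims become explicit surface computations (the $\frac{1}{m_C}(1,1)$ cone point over $C^+$ with discrepancy $-\frac{m_C-2}{m_C}$, and the crepant $A_{m_C-1}$ chain over $C^-$) taken in product with $\mathbb A^1_k$. Your cross-check of (4)--(6) by intersecting with $\mu$-exceptional curves is exactly the standard verification the paper leaves implicit.
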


\begin{proof}
Note that, to prove the Lemma, we will use 
the base change defined by $\Spec\,\widehat \MO_{S, s} \to S$ as in \eqref{n-formal-local}. 

We first show (1). 
We have that $\mu|_{\widetilde R_C}\colon \widetilde R_C \to R_C$ is an isomorphism 
because it is an isomorphism after taking the faithfully flat base change $(-)\otimes_R \widehat R$. 
Each $\mu$-exceptional prime divisor $F$ is a $\mathbb P^1$-bundle over $C^+$ or $C^-$ 
because it is so after considering the base change $(-)\otimes_S \widehat \MO_{S, s}$. 
Thus, (1) holds. 

We now show (2). Let
\[
K:=\Ex(\mu) \cup \widetilde S^+ \cup \widetilde S^- \cup \left(\bigcup_{C\subset \Ex(\psi)} \widetilde{R}_C\right).
\]
If $D_1, D_2, D_3$ are distinct prime divisors contained in 
$K$, then $D_1 \cap D_2 \cap D_3=\emptyset$. 
Thus, it suffices to show that 
if two prime divisors $D_1$ and $D_2$, contained in  $K$,
intersect, then the scheme-theoretic intersection  $D_1 \cap D_2$ is smooth. 
The natural morphism from $D_1 \cap D_2$ onto either $C^+$ or $C^-$ 
is an isomorphism since it is so 
after taking the base change $(-)\times_S \widehat \MO_{S, s}$. 
Thus, (2) holds. 

Similarly,  (3)--(6) follow
after taking the base change $\Spec\,\widehat \MO_{S, s} \to S$. 
\end{proof}

\begin{lem}\label{l-different}
Let $m$ be a sufficiently divisible positive integer. 
After identifying $S$ with $S^+$ (resp. $S^-$),  
the following isomorphisms hold: 
\[
\MO_X(m(K_X+S^+))|_{S^+} \simeq \MO_{S}(m(K_{S}+\sum_{C \subset \Ex(\psi)} \frac{m_C-1}{m_C}C)),
\]
\[
\MO_X(m(K_X+S^-))|_{S^-} \simeq \MO_{S}(m(K_{S}+\sum_{C \subset \Ex(\psi)} \frac{m_C-1}{m_C}C)).
\]
\end{lem}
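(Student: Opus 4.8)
The plan is to prove both isomorphisms by an adjunction computation carried out on the resolution $\mu\colon \widetilde X \to X$, exploiting the fact that $\mu$ restricts to an isomorphism on a general enough locus and that all the relevant different contributions have already been recorded in Lemma~\ref{l-resolution}. First I would treat the $S^+$ case. The idea is to pull everything back to $\widetilde X$, where $S^+$ is replaced by its proper transform $\widetilde S^+$, which is isomorphic to $S$ via $\mu$. Using parts (3) and (4) of Lemma~\ref{l-resolution}, I can rewrite $\mu^*(K_X+S^+)$ as $K_{\widetilde X}+\widetilde S^+$ plus an explicit $\mu$-exceptional correction supported on the $F_C^+$ and $F^-_j$. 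Concretely, combining
\[
K_{\widetilde X}+\sum_{C}\tfrac{m_C-2}{m_C}F_C^+=\mu^*K_X
\quad\text{and}\quad
\mu^*S^+=\widetilde S^++\sum_{C}\tfrac{1}{m_C}F^+_C
\]
gives $\mu^*(K_X+S^+)=K_{\widetilde X}+\widetilde S^++\sum_C \tfrac{m_C-1}{m_C}F_C^+$. Restricting to $\widetilde S^+$ and applying adjunction, the term $(K_{\widetilde X}+\widetilde S^+)|_{\widetilde S^+}=K_{\widetilde S^+}$ identifies with $K_S$, and the key computation is to evaluate $(\tfrac{m_C-1}{m_C}F_C^+)|_{\widetilde S^+}$ as $\tfrac{m_C-1}{m_C}C$ under the isomorphism $\widetilde S^+\simeq S$.

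The core of that computation is to understand, for each exceptional curve $C\subset\Ex(\psi)$, how $F_C^+$ meets $\widetilde S^+$. I would carry this out using the formally local model of \ref{n-formal-local}, where $X$ is, up to completion, a product $\mathbb A^1_k\times D'$ with $D'\subset \mathbb P(1,1,m_C)$, and the blow-up producing $F_C^+$ is explicit. The intersection $F_C^+\cap\widetilde S^+$ maps isomorphically onto $C^+$, hence onto $C$, and I expect the restriction $(F_C^+)|_{\widetilde S^+}$ to equal $C$ (as a reduced divisor on $S\simeq\widetilde S^+$), so that the coefficient $\tfrac{m_C-1}{m_C}$ is carried over verbatim. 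Putting these together yields, for $m$ sufficiently divisible to clear denominators and make $m(K_X+S^+)$ Cartier,
\[
\MO_X(m(K_X+S^+))|_{S^+}\simeq \MO_S\Big(m\big(K_S+\sum_{C\subset\Ex(\psi)}\tfrac{m_C-1}{m_C}C\big)\Big),
\]
which is the first claimed isomorphism. The passage between $X$ and $\widetilde X$ is harmless here because $m(K_X+S^+)$ is Cartier and $\mu$ is an isomorphism over the generic point of $S^+$, so the restriction to $S^+$ is unaffected by the exceptional divisors away from $\widetilde S^+$.

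For the $S^-$ isomorphism I would run the same argument, now using parts (3) and (5) of Lemma~\ref{l-resolution}. The computation $\mu^*(K_X+S^-)=K_{\widetilde X}+\widetilde S^-+(\text{exceptional correction})$ again reduces, after restriction to $\widetilde S^-\simeq S$ and adjunction, to evaluating how the chain $F^-_1,\dots,F^-_{m_C-1}$ (the $A_{m_C-1}$ resolution) contributes along $\widetilde S^-$. The dual graph $\widetilde S^--F^-_1-\cdots-F^-_{m_C-1}-\widetilde R_C$ shows that $\widetilde S^-$ meets only $F^-_1$ among the chain, so only the coefficient of $F^-_1$ in $\mu^*(K_X+S^-)$ survives the restriction; the arithmetic of the $A_{m_C-1}$ discrepancies should again reconstitute precisely the coefficient $\tfrac{m_C-1}{m_C}$ on $C$. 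The main obstacle, and the step that warrants genuine care rather than formal bookkeeping, is this last local intersection computation on the $A_{m_C-1}$ side: one must verify that the chain of exceptional curves contributes the same total different $\tfrac{m_C-1}{m_C}C$ as in the $S^+$ case, despite the more complicated configuration, and that no spurious contributions arise from $\widetilde R_C$. I expect this to follow cleanly from the standard different formula for the $A_{m_C-1}$ cyclic quotient singularity applied fibrewise, using the product structure $\mathbb A^1_k\times(\text{surface singularity})$ of \ref{n-formal-local}, so that both restrictions land on the same divisor $K_S+\sum_C \tfrac{m_C-1}{m_C}C$ and the two stated isomorphisms coincide.
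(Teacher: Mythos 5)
Your proposal is correct and takes essentially the same route as the paper: the paper's proof is a one-line citation of Lemma~\ref{l-resolution}, and your argument---combining parts (3)--(5) of that lemma with adjunction on the smooth $\widetilde X$, restricting to $\widetilde S^{\pm}\simeq S$, and checking locally that $F_C^+$ (resp.\ $F^-_1$) cuts out the reduced curve $C$---is exactly the computation that citation leaves implicit. Your one flagged worry resolves as you expect: in the $S^-$ case the $A_{m_C-1}$ singularities are canonical, so the chain $F^-_1,\dots,F^-_{m_C-1}$ contributes to $\mu^*(K_X+S^-)$ only through the coefficient $\frac{m_C-1}{m_C}$ of $F^-_1$ coming from part (5), and by the dual graph neither the other $F^-_j$ nor $\widetilde R_C$ meets $\widetilde S^-$.
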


\begin{proof}
The claim follows from Lemma~\ref{l-resolution}.
\end{proof}

\begin{cor}\label{c-different}
Let $m$ be a sufficiently divisible positive integer. 
Then the following holds: 
\[
m(K_X+S^++S^-) \sim \pi^*(m(K_S+\sum_{C \subset \Ex(\psi)} \frac{m_C-1}{m_C}C)).
\]
\end{cor}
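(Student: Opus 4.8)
The plan is to use the exact sequence of Lemma~\ref{l-pic-X}(1) to reduce the statement to a computation on a single section, which is exactly what Lemma~\ref{l-different} provides.

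First I would check that $K_X+S^++S^-$ is $\pi$-trivial on fibres. Over the open locus where $\pi$ is a $\mathbb P^1$-bundle, a general fibre $\zeta\cong\mathbb P^1$ satisfies $\deg(K_X|_\zeta)=-2$, since $\pi^*K_S|_\zeta=0$ forces $K_X|_\zeta=K_{X/S}|_\zeta=K_\zeta$; moreover $S^+\cdot\zeta=S^-\cdot\zeta=1$ because $S^\pm$ are sections of $\pi$. Hence $(K_X+S^++S^-)\cdot\zeta=-2+1+1=0$.

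Since $X$ is $\Q$-factorial (Lemma~\ref{l-X-Qfac}), the divisor $K_X+S^++S^-$ is $\Q$-Cartier, and the previous step shows that its class lies in the kernel of $\cdot\,\zeta$ in $\Pic(X)_{\Q}$. By the exactness in Lemma~\ref{l-pic-X}(1) this class lies in the image of $\pi^*$. After absorbing into $m$ both the denominator of the resulting class in $\Pic(S)_{\Q}$ and the order of any torsion in $\Pic(X)$, I obtain, for $m$ sufficiently divisible, a $\Z$-divisor $D$ on $S$ with $m(K_X+S^++S^-)\sim\pi^*D$.

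It then remains to identify $D$. Restricting to the section $S^+$ and using $\pi\circ(S\xrightarrow{\sim}S^+)=\mathrm{id}_S$, I get $D\simeq m(K_X+S^++S^-)|_{S^+}$ under the identification $S\simeq S^+$. From the local descriptions in \eqref{n-negative} and \eqref{n-positive} the sections $S^+$ and $S^-$ are disjoint, so $S^-|_{S^+}=0$ and thus $m(K_X+S^++S^-)|_{S^+}=m(K_X+S^+)|_{S^+}$. Lemma~\ref{l-different} identifies the latter with $m(K_S+\sum_{C\subset\Ex(\psi)}\frac{m_C-1}{m_C}C)$, whence $D\sim m(K_S+\sum_{C\subset\Ex(\psi)}\frac{m_C-1}{m_C}C)$ on $S$, and pulling back gives the claim. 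I expect the only delicate point to be the upgrade from $\Q$-linear to integral linear equivalence: Lemma~\ref{l-pic-X}(1) is stated in $\Pic_{\Q}$, so one recovers $\pi^*D$ with an honest $\Z$-divisor $D$ only after letting $m$ be sufficiently divisible, which is precisely the hypothesis of the statement; everything else is a direct consequence of the cited results together with the disjointness of the two sections.
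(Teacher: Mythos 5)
Your proof is correct and is essentially the same argument the paper intends: its one-line proof of Corollary~\ref{c-different} via Lemma~\ref{l-different} tacitly relies on exactly the steps you make explicit, namely that $K_X+S^++S^-$ has degree zero on fibres, that the exact sequence of Lemma~\ref{l-pic-X}(1) (plus clearing denominators and torsion) writes a sufficiently divisible multiple as $\pi^*D$, and that restricting to the section $S^+$, which is disjoint from $S^-$, identifies $D$ via Lemma~\ref{l-different}. No gaps; your filled-in details are the right ones.
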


\begin{proof}
The claim follows from Lemma~\ref{l-different}. 
\end{proof}

\subsection{The inverse image of the bad locus}

Recall that,  for any curve $C \subset \Ex(\psi)$,  we denote by $R_C$  the reduced part   $\pi^{-1}(C)_{\red}$ of $\pi^{-1}(C)$.

\begin{lem}\label{l-RC}
For any curve $C \subset \Ex(\psi)$, the following  hold
\begin{enumerate}
\item $\pi^*C=m_CR_C$.  
\item $R_C \simeq \mathbb P^1 \times \mathbb P^1$. 
Furthermore, the induced morphism $R_C \to C$ is a $\mathbb P^1$-bundle. 
\item $(C^+\text{ in }R_C)^2=(C^-\text{ in }R_C)^2=0$. 
\item $(m_CS^{+})|_{R_C} = C^+, \quad (m_CS^-)|_{R_C} = C^-.$ 
\item $S^+ \cdot C^+=S^- \cdot C^-=0$. 
\item $K_X \cdot C^+= K_X \cdot C^-=\frac{-C^2-2m_C}{m_C}$. 
\end{enumerate}
\end{lem}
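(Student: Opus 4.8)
The plan is to reduce every assertion to the local models of $\pi$ near $C$ given in \eqref{n-zariski-local} and \eqref{n-formal-local}, the key numerical input being that $\psi^*A\cdot C=A\cdot\psi_*C=0$ since $C$ is $\psi$-exceptional. Fix an affine chart $S^0=\Spec R$ as in \eqref{n-zariski-local}, with $C=\{f=0\}$ and $X^0=\Proj_R R[x,y,z]/(y^{m_C}-fz)$. I would first prove (1). Since the fibre of $\pi$ over a point of $C$ is irreducible by Lemma~\ref{l-equidim}, the support of the Cartier divisor $\pi^*C=\{f=0\}$ is exactly $R_C$, so it suffices to compute the order of vanishing of $f$ along $R_C$. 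On $D_+(x)=\Spec R[Y,Z]/(Y^{m_C}-fZ)$ one has $R_C=\{Y=0\}$ and $f=Y^{m_C}/Z$, where $Z$ is a unit at the generic point of $R_C$; hence $\mathrm{ord}_{R_C}(f)=m_C$ and $\pi^*C=m_CR_C$.

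For (2) and (3) I would first note that $R_C\to C$ is a $\mathbb P^1$-bundle: its fibres are the reduced fibres of $\pi$, which are isomorphic to $\mathbb P^1$, and $R_C$ is smooth because on $D_+(x)$ it is $\{Y=0\}\cong C\times\mathbb A^1$, while on $D_+(z)$ it is the locus where every $x^iy^j/z$ with $j\ge 1$ vanishes, with remaining coordinate $x^{m_C}/z$, again $\cong C\times\mathbb A^1$. Since $C\cong\mathbb P^1$, the surface $R_C$ is a Hirzebruch surface, and the two charts glue via $(z/x^{m_C})\cdot(x^{m_C}/z)=1$, identifying $R_C\cong\mathbb P_C(\MO_C\oplus\mathcal L)$ with $\mathcal L=\MO_S(\llcorner m_C\psi^*A\lrcorner)|_C$. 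From $\psi^*A\cdot C=0$ together with Assumption~\ref{a-A} and the disjointness of the exceptional curves one gets $B\cdot C=-\tfrac1{m_C}C^2$, and since $\llcorner m_C\psi^*A\lrcorner=m_CB+C$ near $C$ this yields $\deg\mathcal L=(m_CB+C)\cdot C=0$. Therefore $R_C\cong\mathbb P^1\times\mathbb P^1$, proving (2), and its two sections $C^+$ and $C^-$ have self-intersection zero, proving (3).

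Next, (4) is again a local computation: on $D_+(z)$ the divisor $C^+=R_C\cap S^+$ is $\{x^{m_C}/z=0\}$, and $m_CS^+$ is the Cartier divisor cutting it out with multiplicity one, so $(m_CS^+)|_{R_C}=C^+$, and symmetrically $(m_CS^-)|_{R_C}=C^-$. Then (5) follows formally: since $C^+\subset S^+$ and $m_CS^+$ is Cartier, $m_C(S^+\cdot C^+)=(m_CS^+)|_{R_C}\cdot C^+=(C^+)^2_{R_C}=0$ by (4) and (3), and likewise for $S^-$. For (6) I would use adjunction with the different for the normal prime divisor $R_C$ in the $\Q$-factorial variety $X$ (Lemma~\ref{l-X-Qfac}): writing $(K_X+R_C)|_{R_C}=K_{R_C}+\mathrm{Diff}_{R_C}(0)$, one obtains
\[
K_X\cdot C^+=K_{R_C}\cdot C^++\mathrm{Diff}_{R_C}(0)\cdot C^+-(R_C|_{R_C})\cdot C^+.
\]
Here $K_{R_C}\cdot C^+=-2$ since $C^+$ is a ruling of $\mathbb P^1\times\mathbb P^1$; the different is supported on $R_C\cap\Sing X=C^+\cup C^-$, so by (3) and the disjointness of $C^+$ and $C^-$ one has $\mathrm{Diff}_{R_C}(0)\cdot C^+=0$; and $(R_C|_{R_C})\cdot C^+=\tfrac1{m_C}\pi^*C\cdot C^+=\tfrac1{m_C}C^2$ by (1). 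This gives $K_X\cdot C^+=-2-\tfrac{C^2}{m_C}=\tfrac{-C^2-2m_C}{m_C}$, and the identical computation with $C^-$ (using that $\pi|_{C^-}\colon C^-\to C$ is an isomorphism) yields the same value.

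I expect the main obstacle to be the self-intersection computation in (3): pinning down that $R_C$ is the \emph{trivial} $\mathbb P^1$-bundle rather than a nontrivial Hirzebruch surface, which forces a careful identification of the gluing line bundle $\mathcal L$ and the verification that $\deg\mathcal L=0$. This vanishing, which is exactly a consequence of $\psi^*A\cdot C=0$, is what then propagates through (5) and (6). A secondary delicate point is the validity of the adjunction/different formula in characteristic two; however, for (6) only the support of $\mathrm{Diff}_{R_C}(0)$ enters, so its precise coefficients along $C^+$ and $C^-$ are irrelevant to the argument.
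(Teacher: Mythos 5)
Your proposal is correct, and its overall strategy (reduction to the local model $X^0=\Proj_R\,R[x,y,z]/(y^{m_C}-fz)$, with the triviality of $\MO_S(m_C\psi^*A)|_C$ coming from $\psi^*A\cdot C=0$) matches the paper for (1), (2) and (5); but for (3), (4) and (6) you take genuinely different routes. For (4) the paper avoids your local Cartier-equation computation: it simply writes $(m_CS^+)|_{R_C}=xC^+$ and pins down $x=1$ by intersecting with $m_CR_C=\pi^*C$ and using $C^2\neq 0$; your version instead needs the verification that $x^{m_C}/z$ cuts out $m_CS^+$ near $C^+$ with multiplicity one (i.e. $\mathrm{ord}_{S^+}(x^{m_C}/z)=m_C$ and no extra components), which is true in the $\frac{1}{m_C}(1,1)$-quotient model but deserves an explicit line. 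For (3) the paper argues more slickly: $C^+\cap C^-=\emptyset$ forces each $C^{\pm}$ to be non-ample, and any non-ample irreducible curve on $\mathbb P^1\times\mathbb P^1$ has square zero; your statement that "the two sections have self-intersection zero" is not automatic — a general section of the trivial bundle is a graph of a map $\mathbb P^1\to\mathbb P^1$ and can have positive self-intersection — so you must add the observation, which your charts do supply (namely $C^-=\{Z=0\}$ on $D_+(x)$ and $C^+=\{x^{m_C}/z=0\}$ on $D_+(z)$), that $C^{\pm}$ are exactly the zero and infinity sections of $\mathbb P_C(\MO_C\oplus\mathcal L)$ with $\deg\mathcal L=0$. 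For (6) the paper never invokes adjunction with the different: it restricts the global relation of Corollary~\ref{c-different}, $m(K_X+S^++S^-)\sim\pi^*\bigl(m(K_S+\sum_{C}\frac{m_C-1}{m_C}C)\bigr)$, to a neighbourhood of $R_C$, uses (4) to obtain $m_CK_X|_{R_C}+C^++C^-\sim_{\Q}(\pi|_{R_C})^*M$ with $\deg M=-C^2-2m_C$, and intersects with $C^{\pm}$ via (3) and the disjointness of $C^+$ and $C^-$. Your different-based computation is also valid: adjunction with an effective different holds for a normal prime divisor in a normal variety with $K_X+R_C$ $\Q$-Cartier in any characteristic (here guaranteed by Lemma~\ref{l-X-Qfac}, which precedes this lemma, so there is no circularity), and, as you correctly note, only $\Supp\,\mathrm{Diff}_{R_C}(0)\subset C^+\cup C^-$ enters, so the characteristic-two coefficients are irrelevant. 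In short, the paper's route buys independence from the theory of the different at the cost of relying on Corollary~\ref{c-different}, while yours trades that corollary for adjunction plus the explicit local equations.
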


\begin{proof}
By \cite[Proposition 3.5.3]{egaii}, 
the scheme-theoretic inverse image $\pi^{-1}(C)$ can be written as 
$\pi^{-1}(C)=\Proj_C\, \mathcal A|_C.$ 
Let $s \in C$ be a closed point and let $X^0$ be as in  (\ref{n-zariski-local}). 
We obtain  
\[
\pi^{-1}(C)|_{X^0}=\Proj\, R[x, y, z]/(y^{m_C}-fz, f) \simeq \Proj\, (R/f)[x, y, z]/(y^{m_C}).
\]
Since $R_C=\pi^{-1}(C)_{\red}$,  \cite[Proposition 3.1.13]{egaii} imply that 
\[
R_C=\Proj_C\, (\mathcal A|_C)_{\red},
\]
and
\[R_C|_{X^0}=\Proj\, (R/f)[x, y, z]/(y) \simeq \Proj (R/f)[x, z].
\]
Thus, (1) holds. 
Since $\MO_S(m_C\psi^*A)|_C \simeq \MO_C$, we can  check that 
\[
R_C=\Proj_C\, (\mathcal A|_C)_{\red}\simeq C \times_k \mathbb P_k(1, m_C).
\]
Since $C \simeq \mathbb P^1_k$ and $\mathbb P_k(1, m_C) \simeq \mathbb P^1_k$,  (2) holds.

We now show (3). 
Since $C^+ \cap C^-=\emptyset$, each divisor $C^{\pm}$ on $R_C$ is not ample. 
Thus,  (3) follows from the fact that 
any curve $B$ on $\mathbb P^1 \times \mathbb P^1$, which is not ample, satisfies $B^2=0$. 

We now show (4). By (4) of Lemma \ref{l-resolution}, it follows that $m_CS^+$ is Cartier. Thus, 
we may write $(m_CS^{+})|_{R_C} = xC^{+}$ for some positive integer $x$. 
Since
\[
\MO_X(m_CR_C)|_{R_C} \simeq \pi^*\MO_S(C)|_{R_C} 
\simeq (\pi|_{R_C})^*\MO_C(C^2)
\]
and
\[
m_CR_C \cdot m_CR_C \cdot S^{\pm}=\pi^*C \cdot \pi^*C \cdot S^{\pm}=C^2,
\]
we have that 
\[
\begin{aligned}
C^2&=m_CR_{C} \cdot m_CR_C \cdot S^+=(m_CS^+)|_{R_C} \cdot (m_CR_C)|_{R_C}\\
&=x C^+ \cdot m_CR_{C}=x C^+ \cdot \pi^*C=x C^2.
\end{aligned}
\]
Thus, $x=1$.  Similarly we obtain $(m_CS^{-})|_{R_C}=C^{-}$ and (4) holds. 

(3) implies that 
\[
m_CS^{\pm} \cdot C^{\pm}=m_CS^{\pm} \cdot m_CS^{\pm} \cdot R_C=(C^{\pm}\text{ in }R_C)^2=0.
\]
Thus, (5) holds. 

We finally show (6). 
Around $R_C$, we have 
\[
m_C(K_X+S^++S^-) \sim_{\Q} \pi^*(m_C(K_S+\frac{m_C-1}{m_C}C)).
\]
Moreover,  
\[
(m_C(K_S+\frac{m_C-1}{m_C}C)) \cdot C=m_C(K_S+C) \cdot C-C^2=-C^2-2m_C.
\]
Thus,  (4) implies 
\[
m_CK_X|_{R_C}+C^++C^- \sim_{\Q} (\pi|_{R_C})^*M
\]
for some Cartier divisor $M$ on $C$ of degree $-C^2-2m_C$. 
By (3), we have
\[\begin{aligned}
m_CK_X \cdot C^{\pm} &=
(m_CK_X|_{R_C}+C^++C^-)\cdot C^{\pm} \\
&= (\pi|_{R_C})^*M \cdot C^{\pm}=-C^2-2m_C.
\end{aligned}\]
Thus, (6) holds. 
\end{proof}

\begin{lem}\label{l-normal-bdl}
Let $m$ be a sufficiently divisible positive integer. After identifying $S$ with $S^+$ (resp. $S^-$), we have
\begin{enumerate}
\item $\MO_X(mS^+)|_{S^+} \simeq \MO_S(m\psi^*A),\quad \MO_X(mS^-)|_{S^-} \simeq \MO_S(-m\psi^*A)$. 
\item $m(S^+-S^-) \sim \pi^*(m\psi^*A).$
\end{enumerate}
\end{lem}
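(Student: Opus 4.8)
The plan is to prove both statements by restricting the divisor relations established in Lemma~\ref{l-resolution} to the two sections $S^+$ and $S^-$, using the pullback formula under the explicit resolution $\mu\colon \widetilde X \to X$. The key point is that although $S^\pm$ pass through the singular locus of $X$ (along the curves $C^\pm$), the $\mathbb Q$-divisor identities in Lemma~\ref{l-resolution}(4)--(6) let me compute the restrictions $\MO_X(mS^\pm)|_{S^\pm}$ after pulling back to the smooth model $\widetilde X$, where all intersection theory is well-behaved, and then pushing the resulting line bundle back down via the isomorphism $\widetilde S^\pm \to S^\pm \cong S$.

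\textbf{Proof of (1).} First I would work on $S^+$. Pulling back $mS^+$ via $\mu$ and using Lemma~\ref{l-resolution}(4), I get
\[
\mu^*(mS^+) = m\widetilde S^+ + \sum_{C \subset \Supp\{\psi^*A\}} \frac{m}{m_C}F_C^+ .
\]
Restricting both sides to $\widetilde S^+$ and identifying $\widetilde S^+ \simeq S^+ \simeq S$ (so that $\mu|_{\widetilde S^+}$ becomes the identity), the restriction of $\MO_X(mS^+)$ to $S^+$ is computed as the restriction of the self-intersection term $m\widetilde S^+|_{\widetilde S^+}$ corrected by the exceptional contributions $\frac{m}{m_C}F_C^+|_{\widetilde S^+}$. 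The first term is the normal bundle of $\widetilde S^+$, and the exceptional corrections along each $F_C^+$ should assemble precisely into the fractional divisor $\frac{1}{m_C}C$ summed over $C$, so that the total becomes $\MO_S(m\psi^*A)$ via Assumption~\ref{a-A}. For $S^-$ the same procedure using Lemma~\ref{l-resolution}(5) produces the chain of exceptional corrections $\frac{m_C-1}{m_C}F_1^- + \cdots + \frac{1}{m_C}F_{m_C-1}^-$, whose net contribution reverses sign and yields $\MO_S(-m\psi^*A)$. I would phrase this cleanly by noting that both restrictions are numerically determined and then identifying the actual line bundles through the $A_{m_C-1}$-resolution data, much as in the local computation of \eqref{n-formal-local}.

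\textbf{Proof of (2).} Given (1), the relation $m(S^+ - S^-) \sim \pi^*(m\psi^*A)$ follows by combining the two restriction formulas with the exact sequence from Lemma~\ref{l-pic-X}(1). Concretely, $m(S^+-S^-)$ is a $\pi$-vertical$+$section combination whose intersection with a fibre $\zeta$ is zero by Lemma~\ref{l-RC}(5), so by Lemma~\ref{l-pic-X}(1) it is $\mathbb Q$-linearly equivalent to $\pi^*D_S$ for some $\mathbb Q$-divisor $D_S$ on $S$. Restricting to $S^+$ and applying part (1) identifies $D_S$ with $m\psi^*A$, and the linear (rather than merely $\mathbb Q$-linear) equivalence is secured by taking $m$ sufficiently divisible so that all the fractional exceptional coefficients clear.

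\textbf{The main obstacle} will be carrying out the restriction-to-$\widetilde S^\pm$ computation carefully enough to get the \emph{exact} line bundle and not merely a numerical equivalence class. The subtlety is that $S^\pm$ meet the singular points of $X$, so $\MO_X(mS^\pm)|_{S^\pm}$ is genuinely a restriction of a reflexive sheaf through singular points; the fractional exceptional coefficients $\frac{j}{m_C}$ in Lemma~\ref{l-resolution}(5) must be tracked precisely, and I expect to verify the identity one curve $C$ at a time after the faithfully flat base change $\Spec\,\widehat\MO_{S,s} \to S$ used throughout Lemma~\ref{l-resolution}, where $X$ becomes a product of $\mathbb A^1$ with an explicit $A_{m_C-1}$ surface singularity and the normal bundle of each section is computable by hand.
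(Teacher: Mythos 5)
Your proof of part (2) is fine modulo part (1) and agrees with what the paper does (the paper simply says (2) follows from (1)); the only slip there is cosmetic: the vanishing $m(S^+-S^-)\cdot\zeta=0$ for a fibre $\zeta$ comes from $S^{\pm}$ being sections, not from Lemma~\ref{l-RC}(5). The genuine gap is in part (1). After restricting $\mu^*(mS^+)=m\widetilde S^+ +\sum_C \frac{m}{m_C}F_C^+$ to $\widetilde S^+$, your argument reduces the lemma to the identity $m\widetilde S^+|_{\widetilde S^+}\simeq\MO_S(mB)$, where $B=\psi^*A-\sum_C\frac{1}{m_C}C$ is the $\Z$-divisor of Assumption~\ref{a-A} (and similarly, with a sign and the extra chain contribution, for $S^-$). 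This normal-bundle identity is exactly equivalent to the statement being proved, and the proposal never derives it: ``the exceptional corrections should assemble precisely'' asserts the conclusion rather than establishing it. Moreover, the repair you suggest --- verifying the identity one curve at a time after the base change $\Spec\,\widehat\MO_{S,s}\to S$ as in (\ref{n-formal-local}) --- cannot work on its own, because the normal bundle of $\widetilde S^{\pm}$ is a line bundle on the global surface $\widetilde S^{\pm}\simeq S$, and formal neighbourhoods of finitely many fibres see it only up to twists by divisors supported on the curves $C$: the classes $m\psi^*A$ and $m\psi^*A+\sum_C x_C C$ with $x_C\in\Z$ have identical formal-local behaviour until you fix a rational section (equivalently, a trivialization away from the bad locus) against which the multiplicities along each $C$ are measured.

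That global normalisation is precisely how the paper argues, and it bypasses the resolution entirely: since $\pi$ coincides with the $\mathbb P^1$-bundle $\mathbb P_S(\MO_S\oplus\MO_S(\llcorner\psi^*A\lrcorner))$ outside $\Supp\{\psi^*A\}$, one gets a priori
\[
\MO_X(mS^{\pm})|_{S^{\pm}}\simeq\MO_S\bigl(\pm m\psi^*A+\sum_{C\subset\Ex(\psi)}x_{C^{\pm}}C\bigr)
\]
for some integers $x_{C^{\pm}}$, and then two intersection numbers kill the ambiguity: $\psi^*A\cdot C=0$ because $C$ is $\psi$-exceptional, and $S^{\pm}\cdot C^{\pm}=0$ by Lemma~\ref{l-RC}(5), so $x_{C^{\pm}}C^2=0$ and $x_{C^{\pm}}=0$ since $C^2<0$. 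Note that you already have Lemma~\ref{l-RC}(5) in hand --- you cite it in part (2) --- so the missing step in your part (1) is to deploy it (together with the bundle-structure identification off the bad locus) to fix the integer coefficients, rather than deferring to a formal-local computation that cannot by itself distinguish the candidate line bundles.
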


\begin{proof}
Since, outside $\Supp\{\psi^*A\}$,  the morphism $\pi\colon X \to S$ coincides with the $\mathbb P^1$-bundle 
\[
\mathbb P_S(\MO_S \oplus \MO_S(\llcorner \psi^*A\lrcorner)) \to S
\]
 we have that 
\[
\MO_X(mS^{\pm})|_{S^{\pm}} \simeq \MO_S(\pm m\psi^*A+\sum_{C \subset \Ex(\psi)} x_{C^\pm }C)
\]
for some integers $x_{C^\pm}$. 
We have that   $\psi^*A \cdot C=0$ and, by  Lemma~\ref{l-RC}, 
$S^{\pm} \cdot C^{\pm}=0$. In particular, $x_{C^\pm}=0$ for any curve $C\subset \Ex(\psi)$. Thus, (1) holds and (2) follows immediately from (1). 
\end{proof}

\begin{lem}\label{l-X-plt}
The following  hold:
\begin{enumerate}
\item For any $0 \leq \alpha <1$, the pair 
\[
(X, S^++S^-+\alpha(\sum_{C \subset \Ex(\psi)}R_C+\sum_{i=1}^dE^X_i))
\]
is plt. 
\item $(X, S^++S^-+\sum_{C \subset \Ex(\psi)}R_C+\sum_{i=1}^dE^X_i)$ is log canonical. 
\item For any $0 \leq \alpha <1$, the pair 
\[
(X, \alpha(S^++S^-+\sum_{C \subset \Ex(\psi)}R_C)+\sum_{i=1}^dE^X_i)
\]
is plt. 
\end{enumerate}
\end{lem}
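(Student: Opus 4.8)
The plan is to reduce the whole statement to the explicit log resolution $\mu\colon \widetilde X \to X$ constructed in Subsection~\ref{ss-resolution}, and then to read off the discrepancies from the formulae in Lemma~\ref{l-resolution}. The key point is that, by Lemma~\ref{l-resolution}(2), $\mu$ is a log resolution of $(X, S^++S^-+\sum_{C} R_C)$, and all the relevant divisors $S^{\pm}$, $R_C$, $E^X_i$ meet the exceptional locus in the simple normal crossing configuration encoded by the dual graph $\widetilde S^- - F^-_1 - \cdots - F^-_{m_C-1} - \widetilde R_C$ together with the single divisor $F^+_C$ over $C^+$. So the strategy is: for each part, pull back $K_X + (\text{boundary})$ via $\mu$ using the crepant formula (3) together with the pullback identities (4),(5),(6), collect the coefficients of every exceptional divisor $F^+_C$ and $F^-_\ell$, and check that they lie in the plt range $(-1,\infty)$ (strictly, for the non-normal-crossing part) or in $[-1,\infty)$ for the log canonical claim.

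First I would set up the discrepancy computation for part (1). Writing $\Delta_\alpha := S^++S^-+\alpha(\sum_C R_C + \sum_i E^X_i)$, I would compute $\mu^*(K_X+\Delta_\alpha) = K_{\widetilde X} + \widetilde{\Delta}_\alpha + \sum (\text{coeff})\,F$ by combining Lemma~\ref{l-resolution}(3) with (4),(5),(6). For the divisor $F^+_C$ over $C^+$, the contributions come from the crepant term $\tfrac{m_C-2}{m_C}$ in (3), from $S^+$ with coefficient $\tfrac{1}{m_C}$ in (4), and from $\alpha R_C$ with coefficient $\tfrac{\alpha}{m_C}$ in (6); the resulting total coefficient should be strictly less than one for $\alpha<1$, giving discrepancy strictly greater than $-1$. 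For each $F^-_\ell$ over $C^-$ the contributions come from $S^-$ via (5) and from $\alpha R_C$ via (6), and I would verify the analogous strict inequality. Since the $E^X_i$ are $\pi$-pullbacks disjoint from $\Ex(\psi)$, hence disjoint from the singular locus, they contribute no new exceptional divisors and only affect coefficients of strata away from $\Sing X$. The plt conclusion then follows because $\llcorner \Delta_\alpha \lrcorner = S^++S^-$, whose two components $\widetilde S^+$ and $\widetilde S^-$ are disjoint (they meet $R_C$ at opposite sections $C^+$ and $C^-$), so the round-down is a disjoint union of normal divisors and all log-canonical-place discrepancies over the non-snc locus are $>-1$.

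For parts (2) and (3) I would repeat the same bookkeeping with the boundary coefficients shifted. For (2) the boundary is $S^++S^-+\sum_C R_C+\sum_i E^X_i$ (i.e.\ $\alpha=1$ in the combination of (1) but now including $R_C$ and $E^X_i$ at coefficient one); here I expect the exceptional coefficients to reach exactly one, yielding discrepancy exactly $-1$ and hence log canonical rather than plt. For (3), with boundary $\alpha(S^++S^-+\sum_C R_C)+\sum_i E^X_i$, the roles of $S^{\pm}$ and $R_C$ are all scaled by $\alpha<1$ while the $E^X_i$ carry coefficient one; since the $E^X_i=\pi^*E_i$ avoid $\Sing X$ and meet the section strata transversally, the round-down $\llcorner\,\cdot\,\lrcorner = \sum_i E^X_i$ is again a disjoint union of copies of $(\text{the fibre over } E_i)$, each normal, and the exceptional discrepancies stay $>-1$ because every term involving the $m_C$-singularities is multiplied by $\alpha<1$.

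The main obstacle I anticipate is purely organisational rather than conceptual: verifying that the strict inequalities on the exceptional coefficients hold uniformly in $m_C$ for the whole chain $F^-_1,\dots,F^-_{m_C-1}$, since the coefficients produced by (5) and (6) vary linearly along the chain (e.g.\ $\tfrac{m_C-\ell}{m_C}$ against $\tfrac{\ell}{m_C}$) and one must confirm that their $\alpha$-weighted sum with the crepant shift never reaches one for $\alpha<1$. A secondary technical point is to justify that $\llcorner \Delta \lrcorner$ is normal in each case; I would handle this by invoking Lemma~\ref{l-resolution}(1)--(2), which shows $\widetilde S^{\pm}$ and $\widetilde R_C$ are smooth (Hirzebruch) surfaces meeting in smooth curves, so that each component of the round-down is smooth and the pairs are genuinely plt. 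Once the coefficient inequalities are checked on the formal/Zariski-local model of \ref{n-formal-local}, the global conclusion follows since everything away from $\Supp\{\psi^*A\}$ is a plain $\mathbb P^1$-bundle where plt-ness is immediate.
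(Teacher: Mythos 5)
Your overall route is genuinely different from the paper's: you compute discrepancies directly on the explicit resolution $\mu\colon \widetilde X\to X$ using Lemma~\ref{l-resolution}(3)--(6), whereas the paper never touches discrepancies on $\widetilde X$ for this lemma; instead it restricts $K_X+S^{\pm}+\cdots$ to the sections via Lemma~\ref{l-different} and Lemma~\ref{l-RC}(1), observes that the resulting surface pairs are strongly $F$-regular, and invokes inversion of adjunction for strong $F$-regularity [Das15, Theorem A] to get plt-ness near $S^{\pm}$ and near each $R_C$, with part (3) handled by a localisation argument at a general point of a hypothetical log canonical centre. Your coefficient bookkeeping for the exceptional divisors is correct as far as it goes: the coefficient of $F^+_C$ in the pullback of $K_X+\Delta_\alpha$ is $\tfrac{m_C-1+\alpha}{m_C}<1$, that of $F^-_\ell$ is $\tfrac{m_C-(1-\alpha)\ell}{m_C}<1$, both become exactly $1$ at $\alpha=1$ (giving (2)), and in case (3) they are $\tfrac{m_C-2+2\alpha}{m_C}$ and $\alpha$, again $<1$.

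However, there is a genuine error in your argument: the claim that the $E^X_i$ are ``disjoint from $\Ex(\psi)$, hence disjoint from the singular locus'' is false. From $\psi^*E^T_i=E_i+\tfrac12(\ell_i+\ell'_i)+\tfrac{1}{2d-4}\Gamma$ one reads off $E_i\cdot\Gamma=E_i\cdot\ell_i=E_i\cdot\ell'_i=1$, so $E^X_i=\pi^*E_i$ meets $R_C$ for $C\in\{\Gamma,\ell_i,\ell'_i\}$ along a fibre of $\pi$, and that fibre passes through the singular points of $X$ lying on $C^+$ and $C^-$. Two consequences: first, Lemma~\ref{l-resolution}(2) only asserts that $\mu$ is a log resolution of $(X,S^++S^-+\sum_C R_C)$, so you must separately verify that $\Ex(\mu)$ together with the strict transforms of $S^{\pm}$, $R_C$ \emph{and} the $E^X_i$ is simple normal crossing (this can be done in the local model of \ref{n-formal-local}: near a point of $C^\pm\cap E^X_i$, $X\simeq \mathbb A^1\times D'$ with $E^X_i$ a slice $\{0\}\times D'$ through the vertex, and blowing up separates everything transversally). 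Second, and more importantly for part (3), where $E^X_i$ carries coefficient one, the strata where $\widetilde{E^X_i}$ meets $F^+_C$, the $F^-_\ell$, $\widetilde S^{\pm}$ and $\widetilde R_C$ do exist and must be included in the plt check: for every pair of intersecting boundary components the coefficient sum must stay below $2$ (e.g.\ $1+\tfrac{m_C-2+2\alpha}{m_C}<2$, which holds), since otherwise a further blow-up produces a divisor of discrepancy $\le -1$. These verifications all succeed, so your approach is repairable, but as written the step that dismisses the interaction of the $E^X_i$ with $\Sing X$ is a real gap rather than an organisational detail.
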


\begin{proof}
We first show (1). After identifying  $S$ with $S^{\pm}$,
Lemma~\ref{l-different}  implies
\[
(K_X+S^{\pm})|_{S^{\pm}}=K_{S}+\sum_{C \subset \Ex(\psi)} \frac{m_C-1}{m_C}C
\]
and (1) of Lemma~\ref{l-RC} implies
\[
R_C|_{S^{\pm}}=\frac{1}{m_C}C.
\]
Thus, 
\[\begin{aligned}
\left(K_X+S^++S^-+\alpha(\sum_{C \subset \Ex(\psi)}R_C+\sum_{i=1}^dE^X_i)\right)\Big{|}_{S^{\pm}}\\
=K_S+\sum_{C \subset \Ex(\psi)}\frac{m_C-1+\alpha}{m_C}C+\alpha \sum_{i=1}^dE_i,
\end{aligned}\]
and since 
$(S, \sum_{C \subset \Ex(\psi)}\frac{m_C-1+\alpha}{m_C}C+\alpha \sum_{i=1}^dE^X_i)$ 
is strongly $F$-regular, 
\cite[Theorem A]{Das15} implies that 
the pair 
\[
(X, S^++S^-+\alpha( \sum_{C \subset \Ex(\psi)}R_C+\sum_{i=1}^dE^X_i))
\]
 is plt around $S^+$ and $S^-$. 
 We now show that it is klt outside $S^+ \cup S^-$.
Let $X':=X \setminus (S^+ \cup S^-)$. 
Since $\pi:X \to S$ is a $\mathbb P^1$-bundle outside $\bigcup_C R_C$, 
it is enough to show that, for any curve $C \subset \Ex(\psi)$,  the pair
\[
(X', (R_C+\alpha \sum_{i=1}^dE^X_i)|_{X'})
\]
 is plt around $R_C$. 
Since  $R_C \to C$ is a $\mathbb P^1$-bundle ((2) of Lemma \ref{l-RC}), 
$(R_C \cap X', \alpha \sum_{i=1}^d(E^X_i)|_{R_C \cap X'})$ is strongly $F$-regular. 
Therefore, \cite[Theorem A]{Das15} implies that $(X', (R_C+ \alpha \sum_{i=1}^dE^X_i)|_{X'})$ is plt around $R_C$, as desired. 
Thus, (1) holds and (2) follows immediately from (1).

We now show (3). 
Let 
\[
\Delta:=\alpha(S^++S^-+\sum_{C \subset \Ex(\psi)}R_C)+\sum_{i=1}^dE^X_i.
\]
By contradiction, we assume that there exists 
a proper birational morphism $\theta\colon W \to X$ and 
a $\theta$-exceptional prime divisor $D$ on $W$ with 
log discrepancy $a(D, X, \Delta)=0$. 
By (1), we have that the centre $\theta(D)$ of $D$ in $X$ is contained in $E^X_i$ for some $i=1,\dots,d$ 
and, by (2), we have that  $\theta(D)$ is not contained in 
any irreducible component of $\Supp\{\Delta\}$. 
Fix a general point $x \in \theta(D)$. 
In particular, $x \not\in \Supp (\Delta -E^X_i)$. 
Locally around $x$, the morphism $\pi$ is a $\mathbb P^1$-bundle over $S$. 
Therefore, $(X, \Delta)$ is plt around $x$, a contradiction.  
Thus, (3) holds. 
\end{proof}

\subsection{Contraction of the negative section}

\subsubsection{Construction of $f\colon X \to Y$}
Recall that we have defined the morphisms
\[ 
X \xrightarrow{\pi} S \xrightarrow{\psi} T
\]
and for any curve $C \subset \Ex(\psi)$, Lemma \ref{l-RC} implies that 
\[
\pi|_{R_C}:R_C\simeq \mathbb P^1 \times \mathbb P^1 \to C\simeq  \mathbb P^1.
\] 
Given an ample $\Q$-divisor $H$ on $T$, we define
\[
L=S^++\pi^*\psi^*H.
\]
Then $L$ is big. By (2) of Lemma~\ref{l-normal-bdl}, it follows that   $S^+$ is semi-ample and thus, so is $L$. 
Let 
\[
f\colon X \to Y
\]
be the  birational morphism induced by $L$ so that $f_*\MO_X=\MO_Y$. 
It follows that for a curve $B$ in $X$, 
the image $f(B)$ is a point if and only if $B$ is contained in $R_C$ for some 
$C \subset \Ex(\psi)$ and $B$ is a fibre of 
the projection 
$R_C=\mathbb P^1 \times \mathbb P^1 \to \mathbb P^1$ other than $\pi|_{R_C}$. 
In particular, we get a commutative diagram
\[
\begin{CD}
X @>f>> Y\\
@VV\pi V @VV\pi_YV\\
S @>\psi>> T.
\end{CD}
\]
We define $T^+:=f_*S^+$, $T^-:=f_*S^-$ and $E_i^Y:=f_*(E^X_i).$ 
For each $i=1,\dots,d$, let $E_i^+$ and $E_i^-$ be the curves on $S^+$ and $S^-$ corresponding to $E_i$ on $S$.

\begin{lem}\label{l-Y-Qfac}
With the same notation as above, the following  hold: 
\begin{enumerate}
\item $Y$ is $\Q$-factorial. 
\item $\rho(Y)=2$. 
\end{enumerate}
\end{lem}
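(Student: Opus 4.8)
The plan is to deduce both assertions from the $\Q$-factoriality criterion of Lemma~\ref{l-Qfac-criterion}. The obstruction to applying it directly is that $\Ex(f)=\bigcup_{C\subset\Ex(\psi)}R_C$ is a union of the $2d+1$ prime divisors $R_C$, not a single one. However, the curves $C\subset\Ex(\psi)$ are pairwise disjoint, so the divisors $R_C$ are pairwise disjoint and $f$ is an isomorphism away from $\bigcup_C R_C$. I would therefore enumerate the curves in $\Ex(\psi)$ as $C_1,\dots,C_N$ with $N=2d+1$ and factor $f$ as a composition
\[
X=X_0\xrightarrow{f_1}X_1\xrightarrow{f_2}\cdots\xrightarrow{f_N}X_N=Y,
\]
where each $f_j$ is a projective birational morphism of projective normal varieties contracting only the proper transform of $R_{C_j}$. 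The disjointness of the $R_C$ guarantees that this factorisation exists: the class of the contracted ruling of each $R_{C_j}$ spans a distinct extremal ray, and peeling these rays off one at a time (via the semi-ample divisor $L$ inducing $f$) produces the $f_j$, with $R_{C_j}$ still appearing as a disjoint copy of $\mathbb P^1\times\mathbb P^1$ in $X_{j-1}$.

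Then I would check hypotheses (a)--(c) of Lemma~\ref{l-Qfac-criterion} for each $f_j$, writing $g_j$ for its restriction to $R_{C_j}$. Hypothesis (a) holds inductively: $X_0=X$ is $\Q$-factorial by Lemma~\ref{l-X-Qfac}, and conclusion (3) of the criterion makes each $X_j$ $\Q$-factorial in turn. For (b), by Lemma~\ref{l-RC} the map $g_j$ is a projection $R_{C_j}\simeq\mathbb P^1\times\mathbb P^1\to\mathbb P^1$, so $\rho(R_{C_j}/g_j(R_{C_j}))=1$. For (c), a $g_j$-numerically trivial Cartier divisor $M$ on $\mathbb P^1\times\mathbb P^1$ has degree zero on the fibres of $g_j$ and hence is numerically equivalent to a pullback $g_j^*(D)$ of a divisor $D$ on $\mathbb P^1$; such an $M$ is automatically $g_j$-semi-ample. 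Thus Lemma~\ref{l-Qfac-criterion} applies at every step.

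Applying conclusion (3) of Lemma~\ref{l-Qfac-criterion} $N$ times shows $Y$ is $\Q$-factorial, which is (1). For (2), conclusion (2) gives $\rho(X_j)=\rho(X_{j-1})-1$, hence $\rho(Y)=\rho(X)-N=\rho(X)-(2d+1)$. By Lemma~\ref{l-pic-X} we have $\rho(X)=\rho(S)+1$, and since $S$ is obtained from $\mathbb P^2_k$ by $2d+1$ successive blow-ups of points we get $\rho(S)=2d+2$; therefore $\rho(X)=2d+3$ and $\rho(Y)=2$.

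The step I expect to be the main obstacle is the construction of the stepwise factorisation: one must confirm that the individual contractions $f_j$ exist as projective morphisms and that at each stage $R_{C_j}$ remains a disjoint $\mathbb P^1\times\mathbb P^1$ meeting the hypotheses of the criterion. The disjointness of the $R_C$, established through Lemma~\ref{l-RC}, is exactly what reduces each step to the contraction of a single ruling on a ruled surface and makes the inductive application of Lemma~\ref{l-Qfac-criterion} legitimate.
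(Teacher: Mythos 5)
Your overall architecture --- reduce to Lemma~\ref{l-Qfac-criterion} by contracting the pairwise disjoint divisors $R_C$ one at a time --- is reasonable, and the parts you actually carry out are correct: hypotheses (a)--(c) of the criterion do hold at each step exactly as you verify them on $R_{C_j}\simeq \mathbb P^1\times\mathbb P^1$, and the Picard number bookkeeping is right ($S$ is $\mathbb P^2_k$ blown up at $2d+1$ points, so $\rho(S)=2d+2$, $\rho(X)=2d+3$ by Lemma~\ref{l-pic-X}, and $2d+1$ drops give $\rho(Y)=2$). The genuine gap is precisely at the point you flag as the main obstacle: the existence of the intermediate contractions $f_j$. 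In characteristic two you cannot invoke a cone/contraction theorem for singular threefolds --- no such machinery is available here, and the paper's broader point is that this setting is pathological --- and the semi-ample divisor $L$ inducing $f$ does not produce the $f_j$: the $\Proj$ of its section ring gives $Y$ in a single step, never the chain $X_0\to X_1\to\cdots\to X_N$. So ``peeling these rays off one at a time'' is an assertion, not a construction.

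What fills this hole, and what is the actual content of the paper's proof, is Keel's semi-ampleness theorem. Fix a curve $C\subset\Ex(\psi)$, take an ample divisor $N$ on $X$, and let $\lambda:=\max\{\lambda\in\R_{\geq 0}\mid N+\lambda R_C \text{ is nef}\}$. Then $M:=N+\lambda R_C$ is nef and big, and is numerically trivial exactly on the ruling of $R_C$ other than $\pi|_{R_C}$; since every nef divisor on $R_C\simeq\mathbb P^1\times\mathbb P^1$ is semi-ample, \cite[Proposition 1.6]{keel99} shows that $M$ is semi-ample, hence induces a birational contraction $f'\colon X\to Y'$ with $\Ex(f')=R_C$. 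The paper then diverges from your plan: instead of iterating through intermediate models, it applies Lemma~\ref{l-Qfac-criterion} once per curve to these auxiliary contractions $f'$ \emph{from} $X$, and deduces $\Q$-factoriality of $Y$ from the fact that every point of $Y$ has a Zariski neighbourhood isomorphic to an open subset of some $Y'$ (or of $X$ away from the exceptional loci), $\Q$-factoriality being a local condition that passes to open subsets. Your inductive chain can be repaired by running the same nef-threshold-plus-Keel argument on each $X_{j-1}$ (disjointness of the $R_C$ does guarantee that the image of $R_{C_j}$ is still a $\mathbb P^1\times\mathbb P^1$ there), but as written the construction of the $f_j$ is missing, and it is the step where all the positive-characteristic input sits. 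If repaired, your explicit count $\rho(Y)=\rho(X)-(2d+1)=2$ is in fact a more transparent justification of part (2) than the paper's terse appeal to Lemma~\ref{l-pic-X}.
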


\begin{proof}
Fix a curve $C \subset \Ex(\psi)$. 
Given  an ample divisor $N$ on $X$, 
we define $M:=N+\lambda R_C$, where 
\[
\lambda:=\max\{\lambda \in \R_{\geq 0}\,|\, N+\lambda R_C\text{ is nef}\}.
\]
By (2) of Lemma \ref{l-RC}, it follows that any nef divisor on $R_C$ is semi-ample. Thus,   \cite[Proposition 1.6]{keel99} implies that $M$ is semi-ample and  it induces birational morphism 
$f'\colon X \to Y'$
such that $f'_*\MO_X=\MO_{Y'}$, 
$\Ex(f')=R_C$ and $f'\colon R_C \to f'(R_C)$ is the projection 
other than $\pi|_{R_C}\colon R_C \to C$. 
Then Lemma~\ref{l-Qfac-criterion} implies that $Y'$ is $\mathbb Q$-factorial. For any curve $C\subset \Ex(\psi)$, there exists an open neighbourhood of $f(R_C)$ in $Y$, which  is isomorphic to a Zariski open subset of $Y'$. Thus,  (1) holds and 
 Lemma~\ref{l-pic-X} implies (2). 
\end{proof}

\begin{lem}\label{l-Y-discrep}
The following  hold:
\begin{enumerate}
\item 
$K_X+\sum_{C \subset \Ex(\psi)}\frac{-C^2-2m_C}{-C^2}R_{C}=f^*K_Y.$ 
\item For any $1\leq i \leq d$ and $0\leq \beta<1$, 
the pair $(Y, E^Y_i+\beta T^++\beta T^-)$ is plt.  
\end{enumerate}
\end{lem}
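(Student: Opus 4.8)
The plan is to obtain (1) by a direct discrepancy computation using the intersection theory on the surfaces $R_C$ already assembled in Lemma~\ref{l-RC}, and then to deduce (2) by pulling back $K_Y+\Delta_Y$ along $f$ and comparing the resulting boundary with the plt pairs produced in Lemma~\ref{l-X-plt}. Throughout, I would denote by $\eta_C$ a fibre of the ruling of $R_C\simeq\mathbb P^1\times\mathbb P^1$ contracted by $f$, i.e. the ruling transverse to $\pi|_{R_C}$; by Lemma~\ref{l-RC} the sections $C^{\pm}=R_C\cap S^{\pm}$ are themselves such contracted curves, so $\eta_C\equiv C^+\equiv C^-$.

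For (1): since $Y$ is $\Q$-factorial by Lemma~\ref{l-Y-Qfac}, $K_Y$ is $\Q$-Cartier and, as the $R_C$ are the only $f$-exceptional divisors, I may write $f^*K_Y=K_X+\sum_C b_C R_C$. Because the surfaces $R_C$ lie over the pairwise disjoint components of $\Ex(\psi)$, intersecting with a contracted curve $\eta_C=C^+$ kills all summands except $R_C$, giving $K_X\cdot C^++b_C\,(R_C\cdot C^+)=0$. Here $K_X\cdot C^+=\frac{-C^2-2m_C}{m_C}$ by Lemma~\ref{l-RC}(6), while the identity $\MO_X(m_CR_C)|_{R_C}\simeq(\pi|_{R_C})^*\MO_C(C^2)$ from the proof of Lemma~\ref{l-RC}(4) gives $R_C\cdot C^+=\frac{C^2}{m_C}$. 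Solving yields $b_C=\frac{-C^2-2m_C}{-C^2}$, which is exactly (1).

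For (2), fix $i$ and set $\Delta_Y:=E^Y_i+\beta T^++\beta T^-$; its coefficients are at most one, so I only need to check that every prime divisor exceptional over $Y$ has positive log discrepancy. First I would pull back the components. Writing $f^*T^+=S^++\sum_C e^+_CR_C$ and intersecting with $\eta_C$, the vanishing $S^+\cdot C^+=0$ from Lemma~\ref{l-RC}(5) together with $R_C\cdot\eta_C=\tfrac{C^2}{m_C}\neq0$ forces $e^+_C=0$, so $f^*T^+=S^+$ and likewise $f^*T^-=S^-$. Writing $f^*E^Y_i=E^X_i+\sum_C c_{i,C}R_C$ and using $E^X_i=\pi^*E_i$, so that $E^X_i\cdot\eta_C=E_i\cdot C$, the same computation gives $c_{i,C}=\frac{m_C(E_i\cdot C)}{-C^2}\geq0$. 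Combining with (1),
\[
f^*(K_Y+\Delta_Y)=K_X+E^X_i+\beta S^++\beta S^-+\sum_C\gamma_C R_C,\qquad \gamma_C=1+\frac{m_C(E_i\cdot C-2)}{-C^2}.
\]
For the $f$-exceptional divisor $R_C$ the log discrepancy over $Y$ equals $1-\gamma_C=\frac{m_C(2-E_i\cdot C)}{-C^2}$, which is positive precisely when $E_i\cdot C\le1$. For a divisor $F$ exceptional over $X$, its log discrepancy over $Y$ is that of the sub-pair on the right-hand side; since $\gamma_C<1$ and $\beta<1$, with $\alpha:=\max\{\beta,\max_C\gamma_C\}<1$ one has a coefficientwise bound $E^X_i+\beta S^++\beta S^-+\sum_C\gamma_CR_C\le E^X_i+\alpha(S^++S^-+\sum_C R_C)$ sharing the same coefficient-one part $E^X_i$, and the latter pair is plt by Lemma~\ref{l-X-plt}(3); monotonicity of discrepancies under decreasing the boundary then gives the required positivity, so $(Y,\Delta_Y)$ is plt.

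The intersection-theoretic bookkeeping above is routine; the genuine content is twofold. The essential input is the geometric bound $E_i\cdot C\le1$ for every component $C$ of $\Ex(\psi)$, which is exactly the inequality making $\gamma_C<1$ and hence $E^Y_i$ a plt (rather than strictly log canonical) centre; I expect this to be the main point, to be read off from the explicit Keel--M\textsuperscript cKernan configuration of Subsection~\ref{ss-KM-surface}, where the $(-1)$-curve $E_i$ meets each of $\ell_i,\ell'_i,\Gamma$ in a single point and is disjoint from $\ell_j,\ell'_j$ for $j\neq i$. The second subtlety is that the pulled-back boundary is only a sub-boundary (some $\gamma_C$ are negative), so plt-ness of $(Y,\Delta_Y)$ must be verified through its log discrepancies and the comparison with Lemma~\ref{l-X-plt}(3), rather than by claiming that the pair on $X$ is itself an honest plt pair.
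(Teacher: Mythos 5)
Your proposal is correct and follows essentially the same route as the paper: part (1) is the identical intersection computation against the contracted curve $C^+$ using Lemma~\ref{l-RC}, and part (2) is the same crepant-pullback argument, bounding the coefficients of $f^*(K_Y+E^Y_i+\beta T^++\beta T^-)$ by $E^X_i+\alpha(S^++S^-+\sum_C R_C)$ with $\alpha<1$ and invoking Lemma~\ref{l-X-plt}(3) together with monotonicity of discrepancies. The only cosmetic difference is that you recover the coefficients of $f^*E^Y_i$ by intersecting with the ruling curves $\eta_C$ on $X$ (isolating the inequality $E_i\cdot C\le 1$), whereas the paper reads them off from $f^*E^Y_i=\pi^*\psi^*E^T_i=\pi^*\bigl(E_i+\tfrac12\ell_i+\tfrac12\ell'_i+\tfrac{1}{2d-4}\Gamma\bigr)$ -- the same intersection numbers in different packaging.
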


\begin{proof}
By (1) and (6) of Lemma~\ref{l-RC}, we have that 
$K_X\cdot C^+=\frac{-C^2-2m_C}{m_C}$ 
and $m_CR_C \cdot C^+=\pi^*C \cdot C^+=C^2$. Thus,
(1) holds.

We now show (2). 
Since 
\[
\begin{aligned}
f^*E^Y_i=f^*\pi_Y^*E^T_i=\pi^*(E_i+\frac{1}{2}\ell_i+\frac{1}{2}\ell'_i+\frac{1}{2d-4}\Gamma)\\
=E_i^X+\frac{m_{\ell_i}}{2}R_{\ell_i}+\frac{m_{\ell'_i}}{2}R_{\ell_i'}+\frac{m_{\Gamma}}{2d-4}R_{\Gamma},
\end{aligned}
\]
we have that 
\[
\begin{aligned}
f^*(K_Y+E^Y_i+\beta T^++\beta T^-)
=K_X+\sum_{C \subset \Ex(\psi)}\frac{-C^2-2m_C}{-C^2}R_{C}\\
+(E^X_i+\frac{m_{\ell_i}}{2}R_{\ell_i}+\frac{m_{\ell'_i}}{2}R_{\ell_i'}+\frac{m_{\Gamma}}{2d-4}R_{\Gamma})+\beta S^++\beta S^-\\
\leq K_X+E^X_i+(1-\epsilon )(S^++S^-+\sum_{C \subset \Ex(\psi)} R_C)
\end{aligned}
\]
for some $0<\epsilon<1$. 
Thus, (3) of Lemma~\ref{l-X-plt} implies that $(Y, E^Y_i+\beta T^++\beta T^-)$ is plt and (2) holds. 
\end{proof}

\begin{lem}\label{l-XY}
Fix $1 \leq i \leq d$.  
Then the following  hold:
\begin{enumerate}
\item $(\sum_{C \subset \Ex(\psi)}\frac{-C^2-2m_C}{-C^2}R_{C}) \cdot E_i^{\pm}=\frac{-\Gamma^2-2m_{\Gamma}}{-\Gamma^2 \cdot m_{\Gamma}}+
\frac{1-m_{\ell_i}}{m_{\ell_i}}+\frac{1-m_{\ell'_i}}{m_{\ell'_i}}.$
\item $S^+ \cdot E^+_j=\psi^*A \cdot E_j, \quad S^- \cdot E^-_j=-\psi^*A \cdot E_j$ for any $j$. 
\item 
\[
\begin{aligned}
K_X\cdot E^+_i=\frac{m_{\Gamma}-1}{m_{\Gamma}}+\frac{m_{\ell_i}-1}{m_{\ell_i}}+\frac{m_{\ell'_i}-1}{m_{\ell'_i}}-1-\psi^*A \cdot E_i,\\
K_X\cdot E^-_i=\frac{m_{\Gamma}-1}{m_{\Gamma}}+\frac{m_{\ell_i}-1}{m_{\ell_i}}+\frac{m_{\ell'_i}-1}{m_{\ell'_i}}-1+\psi^*A \cdot E_i.
\end{aligned}
\]
\item 
\[
\begin{aligned}
K_Y \cdot f(E^+_i)=\frac{m_{\Gamma}-1}{m_{\Gamma}}-1-\psi^*A \cdot E_i+\frac{-\Gamma^2-2m_{\Gamma}}{-\Gamma^2 \cdot m_{\Gamma}},\\
K_Y \cdot f(E^{-}_i)=\frac{m_{\Gamma}-1}{m_{\Gamma}}-1+\psi^*A \cdot E_i+\frac{-\Gamma^2-2m_{\Gamma}}{-\Gamma^2 \cdot m_{\Gamma}}.
\end{aligned}
\]
\item $E_i^Y \cdot f(E^{+}_j)=E_i^Y \cdot f(E^-_j)=\frac{1}{2d-4}$ for any $j=1,\dots,d$.  
\end{enumerate}
\end{lem}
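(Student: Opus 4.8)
The plan is to compute each of the five intersection numbers by pulling everything back to the smooth threefold $X$ via $f^*$, where all the divisors involved are $\Q$-Cartier (by Lemma~\ref{l-Y-Qfac}) and where the earlier lemmas of Section~\ref{s-cone} give explicit formulas for restrictions and pullbacks. The curves $f(E_i^{\pm})$ on $Y$ are the images of $E_i^{\pm}$ under $f$, and since $f$ contracts only fibres of the second projection $R_C \to \mathbb P^1$, the curves $E_i^{\pm}$ are not contracted; hence for any $\Q$-Cartier divisor $N$ on $Y$ we have $N \cdot f(E_i^{\pm}) = f^*N \cdot E_i^{\pm}$ by the projection formula. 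This reduces every statement to an intersection computation on $X$.

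\emph{First I would} establish (1). The key input is the expression $f^*E_i^Y = E_i^X + \frac{m_{\ell_i}}{2}R_{\ell_i} + \frac{m_{\ell'_i}}{2}R_{\ell'_i} + \frac{m_\Gamma}{2d-4}R_\Gamma$ already derived in the proof of Lemma~\ref{l-Y-discrep}, together with the fact that $E_i^{\pm}$ meets only $R_\Gamma$, $R_{\ell_i}$ and $R_{\ell'_i}$ among all the $R_C$ (since on $S$ the curve $E_i$ meets only $\Gamma$, $\ell_i$, $\ell'_i$). To evaluate $R_C \cdot E_i^{\pm}$ I would use that $E_i^{\pm}$ is a section over $E_i \subset S$ and that $\pi^*C = m_C R_C$ (Lemma~\ref{l-RC}(1)), so that $R_C \cdot E_i^{\pm} = \frac{1}{m_C}\,\pi^*C \cdot E_i^{\pm} = \frac{1}{m_C}\,(C \cdot E_i)$; one then reads off $C \cdot E_i$ from the geometry of the Keel--M\textsuperscript{c}Kernan surface ($\Gamma \cdot E_i = 1$ since $\Gamma$ meets each $E_i$ once, and similarly for $\ell_i,\ell'_i$), plugging $\frac{-C^2-2m_C}{-C^2}$ as the coefficient. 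This yields (1).

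\emph{For (2) and (3)} I would use the normal-bundle computation Lemma~\ref{l-normal-bdl}(1), which identifies $\MO_X(mS^{\pm})|_{S^{\pm}} \simeq \MO_S(\pm m\psi^*A)$; restricting to the section curve $E_j^{\pm}$ (which corresponds to $E_j$ under $S^{\pm}\simeq S$) immediately gives $S^{\pm}\cdot E_j^{\pm} = \pm\,\psi^*A \cdot E_j$, which is (2). For (3), I would compute $K_X \cdot E_i^{\pm}$ via adjunction: since $(K_X+S^{\pm})|_{S^{\pm}} = K_S + \sum_C \frac{m_C-1}{m_C}C$ (Lemma~\ref{l-different}), I restrict to $E_i^{\pm}\simeq E_i$ and use $K_S\cdot E_i = -1$ (as $E_i$ is a $(-1)$-curve, from $-K_S\sim\Gamma+F$) together with $\frac{m_C-1}{m_C}C\cdot E_i = \frac{m_C-1}{m_C}$ for $C\in\{\Gamma,\ell_i,\ell'_i\}$; subtracting $S^{\pm}\cdot E_i^{\pm}$ from (2) produces the stated signs on $\psi^*A\cdot E_i$. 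Then (4) follows by combining (1)--(3) through the pullback formula $K_X + \sum_C \frac{-C^2-2m_C}{-C^2}R_C = f^*K_Y$ of Lemma~\ref{l-Y-discrep}(1), writing $K_Y\cdot f(E_i^{\pm}) = f^*K_Y\cdot E_i^{\pm} = K_X\cdot E_i^{\pm} + (\sum_C \tfrac{-C^2-2m_C}{-C^2}R_C)\cdot E_i^{\pm}$ and simplifying, noting that the $\ell_i,\ell'_i$ contributions cancel against the corresponding adjunction terms. Finally (5) reduces to $E_i^Y\cdot f(E_j^{\pm}) = f^*\pi_Y^*E_i^T \cdot E_j^{\pm} = \pi^*(\psi^*E_i^T)\cdot E_j^{\pm}$, and since $\psi^*E_i^T = E_i + \tfrac12\ell_i + \tfrac12\ell'_i + \tfrac{1}{2d-4}\Gamma$ contains $E_j$ disjoint from $E_i$ for $i\ne j$, only the $\Gamma$-term (common to all fibres) survives, giving the constant $\frac{1}{2d-4}$.

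\emph{The main obstacle} I anticipate is (4): assembling the correct cancellation of the $\ell_i,\ell'_i$ terms requires matching the adjunction contributions $\frac{m_{\ell_i}-1}{m_{\ell_i}}$ from (3) against the discrepancy contributions $\frac{-\ell_i^2-2m_{\ell_i}}{-\ell_i^2}\cdot\frac{1}{m_{\ell_i}}$ from (1) and verifying they sum to zero, which depends on the specific values $\ell_i^2=\ell'^2_i=-2$ and the corresponding multiplicities $m_{\ell_i}$. This is purely a bookkeeping check, but it is where a sign or coefficient error would most easily creep in; everything else is a direct application of the restriction and pullback formulas already proved.
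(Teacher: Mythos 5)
Your proposal matches the paper's proof essentially step for step: each item is reduced to an intersection computation on $X$ via the projection formula and then evaluated using Lemma~\ref{l-RC}(1) for (1), Lemma~\ref{l-normal-bdl}(1) for (2), the pullback of $K_X+S^++S^-$ for (3) (the paper cites Corollary~\ref{c-different}, while you use the equivalent Lemma~\ref{l-different} plus the disjointness $S^+\cap S^-=\emptyset$ to drop the $S^{\mp}\cdot E_i^{\pm}$ term), and Lemma~\ref{l-Y-discrep}(1) for (4), where the cancellation you worry about is indeed exactly $\frac{m_{\ell_i}-1}{m_{\ell_i}}+\frac{1-m_{\ell_i}}{m_{\ell_i}}=0$ once $\ell_i^2=\ell_i'^2=-2$ is plugged in. The only tiny gloss is in (5), where your disjointness argument only covers $i\neq j$; for $j=i$ the same pullback gives $(E_i+\tfrac12\ell_i+\tfrac12\ell'_i+\tfrac{1}{2d-4}\Gamma)\cdot E_i=-1+\tfrac12+\tfrac12+\tfrac{1}{2d-4}=\tfrac{1}{2d-4}$, which is how the paper's one-line computation handles both cases at once.
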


\begin{proof}
By (1) of Lemma~\ref{l-RC}, we have 
\[
\begin{aligned}
(\sum_{C \subset \Ex(\psi)}\frac{-C^2-2m_C}{-C^2}R_{C}) \cdot E_i^{\pm}
&=(\sum_{C \subset \Ex(\psi)}\frac{-C^2-2m_C}{-C^2}\frac{\pi^*C}{m_C}) \cdot E_i^{\pm}\\
&=\sum_{C \subset \Ex(\psi)}\frac{-C^2-2m_C}{-C^2}\frac{C \cdot E_i}{m_C}\\
&=\frac{-\Gamma^2-2m_{\Gamma}}{-\Gamma^2 \cdot m_{\Gamma}}+
\frac{1-m_{\ell_i}}{m_{\ell_i}}+\frac{1-m_{\ell'_i}}{m_{\ell'_i}}.
\end{aligned}
\]
Thus, (1) holds. (1) of Lemma~\ref{l-normal-bdl} implies (2). 

Corollary~\ref{c-different} implies 
\[
\begin{aligned}
(K_X+S^++S^-) \cdot E^{\pm}_i
&=\pi^*(K_S+\sum_C \frac{m_C-1}{m_C}C) \cdot E^{\pm}_i
=(K_S+\sum_C \frac{m_C-1}{m_C}C) \cdot E_i\\
&=\frac{m_{\Gamma}-1}{m_{\Gamma}}+\frac{m_{\ell_i}-1}{m_{\ell_i}}+\frac{m_{\ell'_i}-1}{m_{\ell'_i}}-1,
\end{aligned}
\]
Thus, (2) implies (3). 
 
By (1) of Lemma~\ref{l-Y-discrep}, we have 
\[
K_Y \cdot f(E^{\pm}_i)=f^*K_Y \cdot E^{\pm}_i=
(K_X+\sum_{C \subset \Ex(\psi)}\frac{-C^2-2m_C}{-C^2}R_{C}) \cdot E^{\pm}_i\]
Thus, (1) and (3) imply (4).

We have
\[
\begin{aligned}
E_i^Y \cdot f(E^{\pm}_j)&=f^*E_i^Y \cdot E^{\pm}_j=f^*\pi_Y^*E^T_i \cdot E^{\pm}_j\\
&=\pi^*(E_i+\frac{1}{2}(\ell_i+\ell'_i)+\frac{1}{2d-4}\Gamma) \cdot E^{\pm}_j\\
&=(E_i+\frac{1}{2}(\ell_i+\ell'_i)+\frac{1}{2d-4}\Gamma) \cdot E_j=\frac{1}{2d-4}.
\end{aligned}
\]
Thus, (5) holds. 
\end{proof}

\subsubsection{Relative Kawamata--Viehweg vanishing theorem}

Recall that we have defined the  birational morphisms
\[
\widetilde f\colon \widetilde X \xrightarrow{\mu} X \xrightarrow{f} Y.
\]

\begin{lem}\label{l-tildeX-kvv}
Let $L$ be a Cartier divisor on $\widetilde X$. 
Then the following  hold:
\begin{enumerate}
\item 
Assume that $L-(K_{\widetilde X}+\Delta)$ is $\mu$-nef for some 
$\widetilde f$-exceptional effective $\Q$-divisor $\Delta$ 
such that $(X, \Delta)$ is klt. 
Then $R^i\mu_*\MO_{\widetilde X}(L)=0$ for any $i>0$. 
\item 
Assume that $L-(K_{\widetilde X}+\Delta)$ is $\widetilde f$-nef for some 
$\widetilde f$-exceptional effective $\Q$-divisor $\Delta$ 
such that $(X, \Delta)$ is klt. 
Then $R^i\widetilde f_*\MO_{\widetilde X}(L)=0$ for any $i>0$. 
\end{enumerate}
\end{lem}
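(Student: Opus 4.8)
The plan is to derive both statements from the birational Kawamata--Viehweg vanishing of Proposition~\ref{p-birat-kvv}, applied in~(1) to $\mu\colon\widetilde X\to X$ and in~(2) to $\widetilde f=f\circ\mu\colon\widetilde X\to Y$. The one mismatch is that Proposition~\ref{p-birat-kvv} requires the difference $L-(K_{\widetilde X}+\Delta)$ to be relatively \emph{ample}, whereas we only assume it relatively \emph{nef}. So the first step is, in each case, to produce an effective divisor $E$ with $\Supp E=\Ex(\cdot)$ and $-E$ relatively ample, and then to absorb a small multiple $\epsilon E$ into the boundary: since $L-(K_{\widetilde X}+\Delta)$ is relatively nef, the divisor $L-(K_{\widetilde X}+\Delta+\epsilon E)$ becomes relatively ample, while $\Delta+\epsilon E$ remains effective with coefficients $<1$ and with support forming a simple normal crossing divisor together with the exceptional locus.

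For~(1) I would first verify hypotheses~(a) and~(b) of Proposition~\ref{p-birat-kvv} for $\mu$. For~(a), the formally local description~\eqref{n-formal-local} shows that $\mu$ resolves, fibrewise, the product of $\mathbb A^1$ with a surface singularity, so the $\mu$-exceptional curves have negative definite intersection form; this produces an effective $\mu$-exceptional $\Q$-divisor $E_\mu$ with $\Supp E_\mu=\Ex(\mu)$ and $-E_\mu$ $\mu$-ample, whose support is simple normal crossing by Lemma~\ref{l-resolution}(2). For~(b), Lemma~\ref{l-resolution}(1) tells us that each $\mu$-exceptional prime divisor is a Hirzebruch surface and that $\mu$ restricts to its $\mathbb P^1$-bundle structure over $C^{\pm}\simeq\mathbb P^1$; relative vanishing for such a $\mathbb P^1$-fibration is elementary, since relative ampleness of $D-(K+\Gamma)$ forces the degree of $D$ on a fibre to be at least $-1$, so the first higher direct image vanishes and the rest vanish by fibre dimension. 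Perturbing by $\epsilon E_\mu$ as above and applying Proposition~\ref{p-birat-kvv} to the Cartier divisor $L$ then yields $R^i\mu_*\MO_{\widetilde X}(L)=0$.

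For~(2) I would run the same scheme for $\widetilde f$. As every $\mu$-contracted curve is $\widetilde f$-contracted, the hypothesis that $L-(K_{\widetilde X}+\Delta)$ is $\widetilde f$-nef automatically makes it $\mu$-nef, so part~(1) is available en route. To obtain hypothesis~(a) for $\widetilde f$ I would combine $E_\mu$ with the pullback of an effective $f$-exceptional divisor $E_f=\sum_{C}R_C$, whose negative is $f$-ample because $f$ contracts one ruling of each $R_C\simeq\mathbb P^1\times\mathbb P^1$ and $-R_C$ is positive on that ruling by Lemma~\ref{l-RC}; using Lemma~\ref{l-resolution}(6) one checks that $\Supp(\mu^*E_f)=\Ex(\widetilde f)$, and that for small $\delta>0$ the negative of $\mu^*E_f+\delta E_\mu$ is $\widetilde f$-ample (the first summand gives positivity along the $f$-contracted direction and the second along the $\mu$-contracted fibres). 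For~(b), the divisors $\widetilde R_C$ still map to curves under $\widetilde f$, so the $\mathbb P^1$-bundle argument of part~(1) applies verbatim; the divisors $F_C^{\pm}$, however, are now contracted to points, so one needs full Kawamata--Viehweg vanishing on a Hirzebruch surface. Here the $H^2$ vanishing is immediate from Serre duality and bigness, while $H^1$ reduces, via the $\mathbb P^1$-bundle, to checking that the relevant pushforward on the base $\mathbb P^1$ has no summand of degree $\le-2$, an estimate independent of the characteristic. The perturbation and the final appeal to Proposition~\ref{p-birat-kvv} then proceed as before.

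The \textbf{main obstacle} is hypothesis~(a) for the composite $\widetilde f$ in~(2): unlike $\mu$, the morphism $\widetilde f$ contracts some exceptional divisors to curves and others to points, so a single effective exceptional divisor with $\widetilde f$-ample negative and support exactly $\Ex(\widetilde f)$ does not come for free, but requires the two-step combination $\mu^*E_f+\delta E_\mu$ together with the bookkeeping of Lemma~\ref{l-resolution}(6) to ensure that no component of the support drops. By comparison, hypothesis~(b) is routine once one observes that all exceptional divisors are Hirzebruch surfaces, where the vanishing reduces to an elementary, characteristic-free computation on a $\mathbb P^1$-bundle over $\mathbb P^1$.
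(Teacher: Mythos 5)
Your proposal is correct, and its skeleton --- absorb a small multiple of an effective exceptional divisor with relatively ample negative into the boundary so as to upgrade relative nefness to relative ampleness, then invoke Proposition~\ref{p-birat-kvv} --- is exactly the paper's. Where you diverge is in how the auxiliary divisor is produced: the paper obtains, in one stroke, a single $\widetilde f$-exceptional effective $\Q$-divisor $E$ with $-E$ $\widetilde f$-ample by citing \cite[Lemma~2.62]{km98}, the hypothesis being that $Y$ is $\Q$-factorial (Lemma~\ref{l-Y-Qfac}); since $\widetilde f$-ampleness implies $\mu$-ampleness for the composite $\widetilde f=f\circ\mu$, the same $E$ serves both parts (1) and (2). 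Your two-step construction --- $E_\mu$ from fibrewise negative definiteness of the surface resolutions, then $\mu^*E_f+\delta E_\mu$ with $E_f=\sum_{C}R_C$, with the support bookkeeping done via Lemma~\ref{l-resolution}(6) and the negativity of $R_C$ on the contracted ruling via Lemma~\ref{l-RC} --- is in effect a hands-on proof of that citation's conclusion in this special case, so the ``main obstacle'' you flag dissolves once one invokes the general lemma. (Your heuristic ``positivity along the two directions'' should be formalized as the standard fact that $-E_\mu+N\mu^*(-E_f)$ is $(f\circ\mu)$-ample for $N\gg 0$, together with relative Kleiman's criterion to see that $-E_f$ is $f$-ample from its positivity on the finitely many contracted rulings; both are routine.) Conversely, you verify hypothesis (b) of Proposition~\ref{p-birat-kvv} explicitly --- relative vanishing on $\widetilde R_C$ via fibre degree at least $-1$, and absolute Kawamata--Viehweg vanishing on the Hirzebruch surfaces $F^+_C$, $F^-_j$ contracted to points via a characteristic-free degree estimate on the base, where integrality of intersection numbers turns the strict inequality into the needed bound $\geq -1$ --- whereas the paper's three-line proof leaves this hypothesis implicit, Lemma~\ref{l-resolution}(1) being recorded precisely so that it applies. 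The net effect: your argument is longer but self-contained and makes the Hirzebruch-surface vanishing explicit; the paper's is shorter at the cost of an external citation and an unverified (though true) hypothesis (b).
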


\begin{proof}
By (2) of Lemma~\ref{l-resolution}, it follows that $\Supp\,\Ex(\widetilde{f})$ is 
a simple normal crossing divisor. 
As $Y$ is $\Q$-factorial (Lemma \ref{l-Y-Qfac}), 
there exists an $\widetilde{f}$-exceptional effective $\Q$-divisor $E$ such that 
$-E$ is $\widetilde{f}$-ample \cite[Lemma 2.62]{km98}. 
After possibly replacing $\Delta$ by $\Delta + \epsilon E$ for some small positive rational number $\epsilon$, 
we may assume that $L-(K_{\widetilde X}+\Delta)$ is $\mu$-ample in (1) and 
that $L-(K_{\widetilde X}+\Delta)$ is $\widetilde f$-ample in (2). 
Then both (1) and (2)  follow 
from Proposition~\ref{p-birat-kvv}. 
\end{proof}

\begin{lem}\label{l-f-kvv}
Let $D$ be a $\Q$-Cartier $\Z$-divisor on $X$. 
If $D-(K_X+\Delta)$ is $f$-nef 
for some $f$-exceptional effective $\Q$-divisor $\Delta$ 
such that $(X, \Delta)$ is klt, 
then $R^if_*\MO_X(D)=0$ for any $i>0$. 
\end{lem}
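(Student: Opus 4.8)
The plan is to reduce the statement about $R^if_*\MO_X(D)$ on the singular variety $X$ to the corresponding vanishing on the smooth resolution $\widetilde X$, where we may invoke Lemma~\ref{l-tildeX-kvv}. Recall that $\widetilde f = f \circ \mu$ with $\mu\colon \widetilde X \to X$ the explicit resolution constructed in Subsection~\ref{ss-resolution}. The natural strategy is to write the pushforward along $f$ as a composition and exploit the Leray spectral sequence for $\widetilde f = f \circ \mu$, together with the fact that $R^i\mu_*\MO_{\widetilde X}(\mu^*D') = 0$ for $i>0$ whenever $D'$ is a $\mu$-nef Cartier divisor (part (1) of Lemma~\ref{l-tildeX-kvv}).

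First I would choose a Cartier divisor $\widetilde D$ on $\widetilde X$ of the form $\widetilde D = \mu^*D - \lfloor F \rfloor$, where $F$ is a suitable $\mu$-exceptional effective $\Q$-divisor, designed so that: (a) $\mu_*\MO_{\widetilde X}(\widetilde D) = \MO_X(D)$, which holds as soon as the negative part of $\mu^*D - \widetilde D$ is exceptional with coefficients in $[0,1)$; and (b) $\widetilde D - (K_{\widetilde X} + \widetilde\Delta)$ is $\widetilde f$-nef for an appropriate $\widetilde f$-exceptional effective $\Q$-divisor $\widetilde\Delta$ with $(X,\widetilde\Delta)$ klt. The divisor $\widetilde\Delta$ should be built by pulling back $\Delta$ and adjusting by the discrepancy divisor of $\mu$, which is recorded explicitly in part (3) of Lemma~\ref{l-resolution}; since the coefficients there are $\frac{m_C-2}{m_C} < 1$, klt-ness is preserved. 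With such a choice, part (2) of Lemma~\ref{l-tildeX-kvv} gives $R^i\widetilde f_*\MO_{\widetilde X}(\widetilde D) = 0$ for all $i>0$.

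Next I would run the Leray spectral sequence $R^p f_* R^q\mu_*\MO_{\widetilde X}(\widetilde D) \Rightarrow R^{p+q}\widetilde f_*\MO_{\widetilde X}(\widetilde D)$. Using part (1) of Lemma~\ref{l-tildeX-kvv}, arranged so that $\widetilde D - (K_{\widetilde X}+\widetilde\Delta)$ is also $\mu$-nef (this is why the splitting of the ampleness into a $\mu$-part and an $\widetilde f$-part in Lemma~\ref{l-tildeX-kvv} is convenient), one gets $R^q\mu_*\MO_{\widetilde X}(\widetilde D) = 0$ for $q>0$, so the spectral sequence degenerates and yields
\[
R^if_*\MO_X(D) = R^if_*\bigl(\mu_*\MO_{\widetilde X}(\widetilde D)\bigr) = R^i\widetilde f_*\MO_{\widetilde X}(\widetilde D) = 0
\]
for all $i>0$, as desired. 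The one point requiring care is the $f$-nefness of $D-(K_X+\Delta)$ transferring to $\widetilde f$-nefness of $\widetilde D - (K_{\widetilde X}+\widetilde\Delta)$; since $\mu$ is a morphism over $Y$ and $\mu^*$ preserves $\widetilde f$-nefness of pullbacks, this reduces to checking the exceptional correction terms, which is where the explicit formulas of Lemma~\ref{l-resolution}(3)--(6) are needed.

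The main obstacle I expect is the bookkeeping in choosing $\widetilde\Delta$ and $\widetilde D$ so that all three conditions hold simultaneously: the pushforward identity $\mu_*\MO_{\widetilde X}(\widetilde D)=\MO_X(D)$, the klt-ness of $(X,\widetilde\Delta)$, and the two nefness requirements of Lemma~\ref{l-tildeX-kvv}. Because $D$ is only assumed $\Q$-Cartier (not Cartier) on $X$, one must verify that $\mu^*D$ makes sense as a $\Q$-divisor and that rounding down produces the correct integral Cartier divisor on $\widetilde X$ whose pushforward recovers $\MO_X(D)$; this is the step where the $\Q$-factoriality of $X$ (Lemma~\ref{l-X-Qfac}) and the precise discrepancies enter, and where a careless choice of rounding could break either the pushforward identity or the klt bound.
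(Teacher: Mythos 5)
Your proposal is correct and follows essentially the same route as the paper: reduce to the resolution $\mu\colon \widetilde X \to X$, apply both parts of Lemma~\ref{l-tildeX-kvv}, and conclude via the Leray spectral sequence for $\widetilde f = f\circ\mu$. The bookkeeping you flag is settled in the paper by the single canonical choice $M:=\ulcorner \mu^*(D)+K_{\widetilde X}-\mu^*(K_X+\Delta)\urcorner$, for which $\mu_*\MO_{\widetilde X}(M)=\MO_X(D)$ holds by kltness of $(X,\Delta)$ and $M-(K_{\widetilde X}+\widetilde\Delta)=\mu^*\bigl(D-(K_X+\Delta)\bigr)$ (with $\widetilde\Delta$ the fractional round-up correction) is simultaneously $\mu$-trivial and $\widetilde f$-nef, exactly meeting the hypotheses you describe.
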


\begin{proof}
Let $M:=\ulcorner \mu^*(D)+K_{\widetilde X}-\mu^*(K_X+\Delta)\urcorner.$ 
Since $(X, \Delta)$ is klt, we have that $\mu_*\MO_{\widetilde X}(M)=\MO_X(D)$. 
 Lemma~\ref{l-tildeX-kvv} implies that for any $i>0$
\[
R^i\mu_*\MO_{\widetilde X}(M)=0 \quad \text{and}\quad
R^i\widetilde f_*\MO_{\widetilde X}(M)=0.
\]
Thus, the claim follows.
\end{proof}

\begin{lem}\label{l-f-log-fano}
There exists an $f$-exceptional effective $\Q$-divisor $\Delta$ on $X$ such that 
$(X, \Delta)$ is klt and $-(K_X+\Delta)$ is $f$-nef. 
\end{lem}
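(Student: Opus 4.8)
The plan is to take $\Delta$ supported on the $f$-exceptional locus. By the description of $\Ex(f)$ given just before Lemma~\ref{l-Y-Qfac}, the $f$-exceptional prime divisors are precisely the $R_C$ with $C\subset \Ex(\psi)$, so I look for $\Delta=\sum_{C\subset\Ex(\psi)} a_C R_C$ with $0\le a_C<1$. Two conditions must be arranged: that $(X,\Delta)$ be klt, and that $-(K_X+\Delta)$ be $f$-nef; the second condition is what pins down the coefficients $a_C$.

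For the klt condition, since every $a_C<1$ we have $\llcorner \Delta\lrcorner=0$. Setting $\alpha:=\max_C a_C\in[0,1)$ gives $\Delta\le \alpha\sum_{C} R_C$, which is dominated by the boundary of the plt (hence log canonical) pair in Lemma~\ref{l-X-plt}(3); since decreasing an effective boundary only increases discrepancies and $\llcorner\Delta\lrcorner=0$, the pair $(X,\Delta)$ is klt.

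For the $f$-nef condition, recall that the $f$-contracted curves are exactly the fibres of the second projection $R_C\cong\mathbb P^1\times\mathbb P^1\to f(R_C)\cong\mathbb P^1$, all numerically equivalent inside $R_C$ to the curve $C^+$, and that $R_{C'}\cdot C^+=0$ for $C'\neq C$ because the divisors $R_{C'}$ are pairwise disjoint. Hence $-(K_X+\Delta)$ is $f$-nef if and only if $(K_X+a_C R_C)\cdot C^+\le 0$ for each $C$. From Lemma~\ref{l-RC}(6) we have $K_X\cdot C^+=\frac{-C^2-2m_C}{m_C}$, and from $\pi^*C=m_CR_C$ (Lemma~\ref{l-RC}(1)) together with the projection formula and $\pi_*C^+=C$ we obtain $R_C\cdot C^+=\tfrac{1}{m_C}\,\pi^*C\cdot C^+=\tfrac{C^2}{m_C}$. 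Thus the inequality reads $\frac{-C^2-2m_C+a_C C^2}{m_C}\le 0$, which, since $C^2<0$, is equivalent to $a_C\ge 1-\frac{2m_C}{-C^2}$.

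Because $m_C>0$ and $-C^2>0$, the right-hand side is strictly smaller than $1$, so I may take $a_C:=\max\{0,\,1-\frac{2m_C}{-C^2}\}\in[0,1)$. This single choice makes $-(K_X+\Delta)$ $f$-nef while keeping $(X,\Delta)$ klt, which proves the lemma. The step demanding the most care is the two intersection computations $K_X\cdot C^+$ and $R_C\cdot C^+$ and the observation that the resulting threshold $1-\frac{2m_C}{-C^2}$ is genuinely below $1$; once these are secured, the compatibility of the klt requirement ($a_C<1$) with the $f$-nefness requirement ($a_C\ge 1-\frac{2m_C}{-C^2}$) is immediate and an admissible $\Delta$ exists.
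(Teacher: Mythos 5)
Your proof is correct and is essentially the paper's own argument: the threshold $1-\frac{2m_C}{-C^2}$ you compute is exactly the coefficient appearing in Lemma~\ref{l-Y-discrep}(1) (which says $K_X+\sum_C\frac{-C^2-2m_C}{-C^2}R_C=f^*K_Y$), and the klt verification via Lemma~\ref{l-X-plt}(3) is the other ingredient the paper cites in its one-line proof. Your explicit truncation $a_C=\max\bigl\{0,\,1-\frac{2m_C}{-C^2}\bigr\}$ -- needed since the crepant coefficient can be negative (e.g.\ it equals $1-m_C$ for $C=\ell_i$ with $m_{\ell_i}=2$) and harmless because each $-R_C$ is $f$-nef -- is precisely the device the paper leaves implicit here and spells out in the proof of Proposition~\ref{p-Y-cokernel2}.
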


\begin{proof}
Lemma~\ref{l-X-plt} and Lemma~\ref{l-Y-discrep} imply the claim. 
\end{proof}

\begin{lem}\label{l-Y-rational}
The following  hold:
\begin{enumerate}
\item  $H^i(Y, \MO_Y)=0$  for any $i>0$. 
\item 
Given any proper birational morphism $h\colon W \to Y$ from a smooth proper threefold $W$, 
we have $R^ih_*\MO_W=0$  for any $i>0$.
\end{enumerate}
\end{lem}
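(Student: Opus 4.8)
The plan is to transfer both vanishing statements from $X$, where they are known by Lemma~\ref{l-X-rational}, to $Y$ along the contraction $f\colon X\to Y$. The key intermediate step, which feeds both parts, is the relative vanishing $R^if_*\MO_X=0$ for all $i>0$.

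First I would establish $R^if_*\MO_X=0$ for $i>0$. By Lemma~\ref{l-f-log-fano} there exists an $f$-exceptional effective $\Q$-divisor $\Delta$ such that $(X,\Delta)$ is klt and $-(K_X+\Delta)$ is $f$-nef. Taking $D=0$, so that $D-(K_X+\Delta)=-(K_X+\Delta)$ is $f$-nef, Lemma~\ref{l-f-kvv} immediately yields $R^if_*\MO_X=0$ for every $i>0$. For part (1), recall that $f_*\MO_X=\MO_Y$ by the construction of $f$. Since the higher direct images vanish, the Leray spectral sequence $H^p(Y,R^qf_*\MO_X)\Rightarrow H^{p+q}(X,\MO_X)$ degenerates and produces isomorphisms $H^i(Y,\MO_Y)\simeq H^i(X,\MO_X)$ for all $i$. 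By Lemma~\ref{l-X-rational}(1) the right-hand side vanishes for $i>0$, which proves (1).

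For part (2), I would first verify the vanishing for the single resolution $\widetilde f=f\circ\mu\colon \widetilde X\to Y$, whose source $\widetilde X$ is smooth and which is proper and birational onto $Y$ since both $\mu$ and $f$ are; hence $\widetilde f$ is a resolution of $Y$. Applying the Leray spectral sequence to the composite $\widetilde f=f\circ\mu$, namely $R^pf_*(R^q\mu_*\MO_{\widetilde X})\Rightarrow R^{p+q}\widetilde f_*\MO_{\widetilde X}$, and using $\mu_*\MO_{\widetilde X}=\MO_X$ together with $R^q\mu_*\MO_{\widetilde X}=0$ for $q>0$ (both established in the proof of Lemma~\ref{l-X-rational}), the spectral sequence collapses to $R^p\widetilde f_*\MO_{\widetilde X}\simeq R^pf_*\MO_X$, which vanishes for $p>0$ by the first step. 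Finally, for an arbitrary proper birational morphism $h\colon W\to Y$ from a smooth threefold $W$, the vanishing of $R^ih_*\MO_W$ depends only on $Y$ and not on the chosen resolution, by \cite[Theorem~1]{CR11}; since it holds for $\widetilde f$, it holds for $h$ as well, which gives (2).

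The argument is essentially an assembly of the preceding lemmas, so no single step is genuinely hard; the only points requiring care are the bookkeeping in the two Leray spectral sequences and the observation that $\widetilde f$ is an honest resolution of $Y$, so that the birational-invariance input \cite[Theorem~1]{CR11} is legitimately applicable to upgrade the vanishing for $\widetilde f$ to all resolutions of $Y$.
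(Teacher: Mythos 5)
Your proposal is correct and follows essentially the same route as the paper: the paper's proof likewise combines Lemma~\ref{l-f-kvv} and Lemma~\ref{l-f-log-fano} to get $R^if_*\MO_X=0$ for $i>0$ and then deduces both statements from Lemma~\ref{l-X-rational}, leaving implicit exactly the Leray spectral sequence bookkeeping and the appeal to \cite[Theorem~1]{CR11} that you spell out. Your version is simply a more detailed write-up of the paper's two-line argument.
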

 
\begin{proof}
Lemma~\ref{l-f-kvv} and Lemma~\ref{l-f-log-fano} imply that $R^if_*\MO_X=0$ for any $i>0$. 
Thus, both (1) and (2)  follow from Lemma~\ref{l-X-rational}. 
\end{proof}

\begin{lem}\label{l-T-normal}
Both  $T^+$ and $T^-$ are normal, and the induced morphisms 
$T^+ \to T$ and $T^- \to T$ are isomorphisms. 
\end{lem}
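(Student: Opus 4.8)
The plan is to treat $T^+$ in detail; the argument for $T^-$ is identical after replacing $S^+, C^+$ by $S^-, C^-$. Since $S^+$ is a section of $\pi$, the morphism $\pi|_{S^+}\colon S^+ \to S$ is an isomorphism; in particular $S^+$ is smooth, hence normal, and $\pi|_{S^+}$ identifies the curve $C^+$ (the reduced part of $S^+ \cap R_C$) with the curve $C$ for every $C \subset \Ex(\psi)$. Restricting the commutative square relating $f, \pi, \pi_Y, \psi$ to $S^+$ yields
\[
(\pi_Y|_{T^+}) \circ (f|_{S^+}) = \psi \circ (\pi|_{S^+}),
\]
where $f|_{S^+}\colon S^+ \to T^+$ and $\pi_Y|_{T^+}\colon T^+ \to T$. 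Over the open locus where $\psi$ is an isomorphism, $\pi_Y$ is a $\mathbb P^1$-bundle and $T^+$ is a section of it, so $\pi_Y|_{T^+}$ is birational. The strategy is to show that $\pi_Y|_{T^+}$ is moreover finite; since $T$ is normal, Zariski's main theorem will then force it to be an isomorphism, and the normality of $T^+$ will follow at once.

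The key step is to identify the curves contracted by $g := f|_{S^+}$. By Lemma~\ref{l-RC}, for each $C \subset \Ex(\psi)$ we have $R_C \simeq \mathbb P^1 \times \mathbb P^1$ with $\pi|_{R_C}$ being one of the two projections, and by construction $f$ contracts exactly the fibres of the other projection. Now $C^+$ is a section of $\pi|_{R_C}$ (because $S^+$ is a section of $\pi$ and $C^+ \to C$ is an isomorphism), while $(C^+)^2 = 0$ in $R_C$ by Lemma~\ref{l-RC}(3). A section of one ruling of $\mathbb P^1 \times \mathbb P^1$ whose self-intersection is zero necessarily has the class of a fibre of the other ruling; hence $C^+$ is contracted by $f$. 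Conversely, the only curves of $S^+$ lying in some $R_C$ are the $C^+$, so $g$ contracts precisely the curves $\{C^+ : C \subset \Ex(\psi)\}$, which under $\pi|_{S^+}$ correspond exactly to the curves $\Ex(\psi)$ contracted by $\psi$.

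Finally I would deduce finiteness. Since $g$ and $h := \psi \circ (\pi|_{S^+})$ contract the same curves and $(\pi_Y|_{T^+}) \circ g = h$, no curve of $T^+$ can be contracted by $\pi_Y|_{T^+}$: if some $\delta \subset T^+$ were contracted, a curve $\gamma' \subset S^+$ dominating $\delta$ would be contracted by $h$ but not by $g$, contradicting the previous step. Thus $\pi_Y|_{T^+}$ is quasi-finite and proper, hence finite; being also birational onto the normal variety $T$, it is an isomorphism, and therefore $T^+ \simeq T$ is normal. The main obstacle is the key step: verifying that $C^+$ is a fibre of the ruling of $R_C$ contracted by $f$, which rests on the self-intersection computation of Lemma~\ref{l-RC}(3) combined with the fact that $C^+$ is a section of the complementary ruling.
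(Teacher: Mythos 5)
Your proof is correct, but it takes a genuinely different route from the paper's. The paper argues cohomologically: it reduces normality of $T^+$ to surjectivity of $\MO_{T^+}\to (f|_{S^+})_*\MO_{S^+}$, which it deduces from $R^1f_*\MO_X(-S^+)=0$; this vanishing comes from the $f$-numerical triviality of $S^+$ (via $S^+\cdot C^+=0$, i.e.\ part (5) of Lemma~\ref{l-RC}) combined with the relative Kawamata--Viehweg package of Lemmas~\ref{l-f-kvv} and~\ref{l-f-log-fano}, and normality then descends from the normal surface $S^+$, the isomorphism statement being left essentially implicit. You instead argue purely with intersection theory: combining $(C^+\text{ in }R_C)^2=0$ (part (3) of Lemma~\ref{l-RC}) with the observation that $C^+$ is a section of $\pi|_{R_C}$, you identify the class of $C^+$ in $R_C\simeq \mathbb P^1\times\mathbb P^1$ as that of a fibre of the second ruling --- and an irreducible curve in that class is literally such a fibre, so $C^+$ is contracted by $f$; since $S^+\cap R_C$ is set-theoretically $C^+$, it follows that $f|_{S^+}$ and $\psi\circ \pi|_{S^+}$ contract exactly the same curves, whence $\pi_Y|_{T^+}$ contracts no curve, is quasi-finite and proper, hence finite, and a finite birational morphism onto the normal surface $T$ is an isomorphism, giving normality of $T^+$ at once. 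Your approach buys elementarity --- no vanishing theorem is invoked, a real virtue in characteristic $p$ --- and it makes the isomorphism half of the lemma, which the paper's two-line reduction glosses over, fully explicit; the paper's approach buys the stronger sheaf-level identity $(f|_{S^+})_*\MO_{S^+}=\MO_{T^+}$ and runs on the same vanishing mechanism ($R^1f_*$ of negative twists of $S^-$) that is reused immediately afterwards in Propositions~\ref{p-Y-cokernel} and~\ref{p-Y-cokernel2}, so it integrates more tightly with the rest of Section~\ref{s-cone}. It is worth noting that both arguments ultimately bottom out in Lemma~\ref{l-RC}, just in different parts: the paper uses (5), you use (3).
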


\begin{proof}
It is enough to show that $T^+$ and $T^-$ are normal. 
Since the proof is the same, we only show the normality of $T^+$. 
It is enough to prove that the induced homomorphism 
\[
\MO_{T^+} \to f_*\MO_{S^+}
\]
is surjective. 
We have the exact sequence: 
\[
0 \to \MO_X(-S^+) \to \MO_X \to \MO_{S^+} \to 0.
\]
By (5) of Lemma \ref{l-RC}, we have that $S^+ \cdot \zeta=0$ for any $f|_{S^+}$-exceptional curve $\zeta$. Thus, 
$S^+$ is $f$-numerically trivial, and Lemma~\ref{l-f-kvv} and Lemma~\ref{l-f-log-fano} imply that $R^1f_*\MO_X(-S^+)$, as claimed.  
\end{proof}

\subsubsection{Construction of $g\colon Y \to Z$}\label{ss-construction-Z}

By  Lemma \ref{l-T-normal}, we may identify $T^+$ (resp. $T^-$) with $T$, so that we get the $\Q$-linear equivalences 
\[
\MO_Y(T^+)|_{T^+} \sim_{\Q} A\qquad \MO_Y(T^-)|_{T^-} \sim_{\Q} -A
\]
and
\[
T^+-T^- \sim_{\Q} \pi_Y^*A.\]
Thus, 
$T^+$ is a semi-ample and big divisor on $Y$. 
Let 
\[
g\colon Y \to Z
\]
be the birational contraction induced by $T^+$ such that $g_*\MO_Y=\MO_Z$. 
It follows that $\Ex(g)=T^-$ and $g(\Ex(g))$ is one point. 
We define  $E^Z_i=g_*E^Y_i$ for $i=1, \dots, d$.

\begin{lem}\label{l-Z-Qfac}
$Z$ is $\Q$-factorial, $\rho(Z)=1$, and $-K_Z$ is ample. 
\end{lem}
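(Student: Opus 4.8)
The plan is to derive the $\Q$-factoriality and the computation $\rho(Z)=1$ by applying the criterion of Lemma~\ref{l-Qfac-criterion} to the birational contraction $g\colon Y \to Z$ of Subsection~\ref{ss-construction-Z}, whose exceptional locus is the prime divisor $T^-$, and then to establish the ampleness of $-K_Z$ separately by a single intersection computation, exploiting that $\rho(Z)=1$.

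First I would verify the three hypotheses of Lemma~\ref{l-Qfac-criterion} for $g$, taking $E:=T^-$ and the restriction $g|_{T^-}\colon T^- \to g(T^-)$, which is a morphism to a point. Hypothesis (a) is Lemma~\ref{l-Y-Qfac}. For (b), since $T^-$ is normal and isomorphic to $T$ by Lemma~\ref{l-T-normal}, and $\rho(T)=1$ by Lemma~\ref{l_ST}, we obtain $\rho(T^-/g(T^-))=\rho(T^-)=1$. The essential point is hypothesis (c): every $g|_{T^-}$-numerically trivial Cartier divisor $M$ on $T^-\simeq T$ must be $g|_{T^-}$-semi-ample, which, as the target is a point, means every numerically trivial Cartier divisor $M$ on $T$ is semi-ample. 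To prove this I would pull $M$ back along the minimal resolution $\psi\colon S\to T$: then $\psi^*M$ is a numerically trivial Cartier divisor on the smooth rational surface $S$. Since $H^1(S,\MO_S)=0$ and $\Pic(S)$ is torsion-free, numerical and linear equivalence agree on $\Pic(S)$, so $\psi^*M\sim 0$; pushing forward via $\psi_*\MO_S=\MO_T$ and the projection formula gives $M\sim 0$, hence $M$ is semi-ample. Lemma~\ref{l-Qfac-criterion} then yields that $Z$ is $\Q$-factorial and $\rho(Z)=\rho(Y)-1=2-1=1$ by Lemma~\ref{l-Y-Qfac}.

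It remains to prove that $-K_Z$ is ample. As $Z$ is projective, $\Q$-factorial, and $\rho(Z)=1$, it suffices to exhibit one curve $C$ on $Z$ with $-K_Z\cdot C>0$, since then $-K_Z$ is numerically a positive multiple of an ample class and so ample. I would take $C:=g_*(f(E_i^+))$ for a fixed $i$. This is a genuine curve, because $f(E_i^+)\subset T^+$ is disjoint from $T^-=\Ex(g)$ and hence not contracted by $g$; the same disjointness gives $T^-\cdot f(E_i^+)=0$. Writing $K_Y=g^*K_Z+a\,T^-$ and applying the projection formula, the term $a\,T^-$ drops out and $K_Z\cdot C=g^*K_Z\cdot f(E_i^+)=K_Y\cdot f(E_i^+)$. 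By Lemma~\ref{l-XY} this intersection number equals $-\psi^*A\cdot E_i+2/\Gamma^2$ after a short simplification, which is strictly negative because $A$ is ample, so $\psi^*A\cdot E_i=A\cdot E_i^T>0$, while $\Gamma^2<0$. Therefore $-K_Z\cdot C>0$ and $-K_Z$ is ample.

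The main obstacle is hypothesis (c) of Lemma~\ref{l-Qfac-criterion}: one must rule out nontrivial numerically trivial line bundles on the singular surface $T$, which is precisely why the argument is pushed down from the smooth rational resolution $S$, where the Picard group is well understood. Once $\rho(Z)=1$ is known, the ampleness of $-K_Z$ is routine given Lemma~\ref{l-XY}, provided one is careful that the discrepancy contribution $a\,T^-$ vanishes because $T^+$ and $T^-$ are disjoint.
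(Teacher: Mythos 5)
Your proposal is correct. For $\Q$-factoriality and $\rho(Z)=1$ you follow exactly the paper's route, namely Lemma~\ref{l-Qfac-criterion} applied to $g$ with input from Lemma~\ref{l-Y-Qfac}; in fact you go a step further than the paper, which invokes Lemma~\ref{l-Qfac-criterion} without spelling out hypothesis (c), whereas you verify it by pulling a numerically trivial Cartier divisor $M$ on $T^-\simeq T$ back to the smooth rational surface $S$, where $H^1(S,\MO_S)=0$ and $\Pic(S)$ is free, so $\psi^*M\sim 0$ and hence $M\sim 0$ — a correct and worthwhile addition. Where you genuinely diverge is the ampleness of $-K_Z$: the paper deduces it from bigness, writing $-K_Y\sim_{\Q}T^++T^--\pi_Y^*K_T$ via Corollary~\ref{c-different}, using that $T^+$ is big and $-K_T$ is ample (Lemma~\ref{l_ST}) to get $-K_Y$, hence $-K_Z$, big, and then concluding from $\rho(Z)=1$; you instead exhibit a single curve $C=g_*(f(E_i^+))$ with $-K_Z\cdot C>0$, using that $T^+\cap T^-=\emptyset$ kills the discrepancy term $aT^-$ and that Lemma~\ref{l-XY}(4) simplifies to $K_Y\cdot f(E^+_i)=-\psi^*A\cdot E_i+2/\Gamma^2<0$ (the arithmetic checks out, since $(m_\Gamma-1)/m_\Gamma-1=-1/m_\Gamma$ cancels against the $1/m_\Gamma$ in the last term, and $\Gamma^2=4-2d<0$, $\psi^*A\cdot E_i=A\cdot E^T_i>0$). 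Both closing arguments rest on the same principle that on a projective $\Q$-factorial variety with $\rho=1$ the sign of one intersection or the bigness of one class determines ampleness; the paper's version is softer and avoids the intersection table, while yours is more explicit but requires the disjointness $T^+\cap T^-=\emptyset$, which you should justify briefly (it holds because $S^+\cap S^-=\emptyset$ and, on each $R_C\simeq \mathbb P^1\times\mathbb P^1$, the curves $C^+$ and $C^-$ are distinct members of the contracted ruling, so $f(C^+)\neq f(C^-)$).
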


\begin{proof}
Lemma~\ref{l-Qfac-criterion} and Lemma~\ref{l-Y-Qfac} imply that $Z$ is $\Q$-factorial and $\rho(Z)=1$.
 Corollary \ref{c-different} implies that  $-K_Y\sim_{\Q} T^++T^--\pi_Y^*K_T$ and, in particular, Lemma \ref{l_ST} implies that  $-K_Y$ is big. 
Thus,  $-K_Z$ is big. 
Since $\rho(Z)=1$, it follows that $-K_Z$ is ample. 
\end{proof}

\begin{lem}\label{l-Z-plt}
Fix $1\leq i \leq d$. 
Then the following  hold: 
\begin{enumerate}
\item 
\[
K_Y+E^Y_i+\frac{\psi^*A \cdot E_i-\frac{1}{2d-4}}{\psi^*A \cdot E_i}T^-=g^*(K_Z+E^Z_i).
\]
\item 
The pair $(Z, E^Z_i)$ is plt. 
\end{enumerate}
\end{lem}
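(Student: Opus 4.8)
The plan is to prove the two statements in Lemma~\ref{l-Z-plt} in sequence, using the adjunction computation for $g$ to pin down the boundary coefficient, and then invoking inversion of adjunction together with the normality of $T$ to deduce that $(Z,E^Z_i)$ is plt. First I would establish (1). Since $g\colon Y\to Z$ contracts precisely $T^-$, by Lemma~\ref{l-Z-Qfac} we know $Y$ is $\Q$-factorial, $\rho(Y)=2$ and $\rho(Z)=1$, so $\operatorname{Ex}(g)=T^-$ is a single prime divisor. Writing
\[
K_Y+E^Y_i+cT^- = g^*(K_Z+E^Z_i)
\]
for an unknown coefficient $c$, I would determine $c$ by intersecting both sides with a curve $\zeta$ contracted by $g$, namely a fibre of $\pi_Y$ lying in $T^-$; concretely I would take $\zeta=f(E^-_i)$, whose relevant intersection numbers with $K_Y$, $E^Y_i$ and $T^-$ have already been computed in Lemma~\ref{l-XY} and Lemma~\ref{l-Z-plt}-adjacent results. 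Pairing with $g^*(\cdots)$ gives zero on the right, so
\[
c = -\frac{(K_Y+E^Y_i)\cdot \zeta}{T^-\cdot \zeta},
\]
and the stated value $c=\dfrac{\psi^*A\cdot E_i-\frac{1}{2d-4}}{\psi^*A\cdot E_i}$ should drop out after substituting the formulas from Lemma~\ref{l-XY}(4) and (5) together with $T^-\cdot f(E^-_i)=S^-\cdot E^-_i=-\psi^*A\cdot E_i$ from Lemma~\ref{l-XY}(2). The point is that, since $\rho(Y/Z)=1$, matching a single intersection number forces the linear equivalence, because any $g$-numerically trivial divisor is a $g$-pullback by Lemma~\ref{l-Qfac-criterion}(1).

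For (2), the strategy is to deduce that $(Z,E^Z_i)$ is plt from the fact that $(Y,E^Y_i+cT^-)$ is plt near $T^-$, using that plt-ness is preserved under taking the image of a $(K+\Delta)$-crepant contraction of a divisor with coefficient strictly less than one. Crucially, the coefficient $c$ computed above must satisfy $0\le c<1$: positivity holds because $\psi^*A\cdot E_i>\frac{1}{2d-4}>0$ (this uses the explicit divisor $A$ and the geometry of the Keel--M\textsuperscript{c}Kernan surface), and $c<1$ is automatic since $\frac{1}{2d-4}>0$. Then Lemma~\ref{l-Y-discrep}(2) with $\beta=c$ guarantees that $(Y,E^Y_i+cT^-)$ is plt (taking $\beta=c$ and noting we may also include or drop the $\beta T^+$ term, which is disjoint from the locus of interest). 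Since $g$ is $(K_Y+E^Y_i+cT^-)$-crepant by part (1), the discrepancies over $Z$ agree with those over $Y$, so no log canonical place of $(Z,E^Z_i)$ other than $E^Z_i$ itself can have log discrepancy zero; plt-ness of $(Z,E^Z_i)$ follows.

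The main obstacle I expect is the clean verification of part (1), specifically confirming that the single intersection computation genuinely determines the crepancy coefficient rather than merely constraining it. This requires that $\zeta=f(E^-_i)$ is indeed a curve contracted by $g$ and that it generates the relevant extremal ray, so that $\rho(Y/Z)=1$ applies; one must also check $T^-\cdot\zeta\ne 0$ so that division is legitimate, which is where Lemma~\ref{l-XY}(2) giving $S^-\cdot E^-_i=-\psi^*A\cdot E_i\ne0$ is essential. A secondary subtlety is ensuring the coefficient lies in $[0,1)$ so that Lemma~\ref{l-Y-discrep}(2) applies; this hinges on the sign of $\psi^*A\cdot E_i$, which should be arranged by the choice of $A$ in the subsequent section. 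Once these numerical facts are in place, both the identity in (1) and the plt conclusion in (2) follow formally from $\Q$-factoriality of $Y$ and $Z$ and the inversion-of-adjunction machinery already assembled in Lemma~\ref{l-Y-discrep} and Lemma~\ref{l-X-plt}.
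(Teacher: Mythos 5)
Your proposal follows essentially the same route as the paper: the coefficient of $T^-$ is pinned down by intersecting $K_Y+E^Y_i+cT^-=g^*(K_Z+E^Z_i)$ with the $g$-contracted curve $f(E^-_i)$, using Lemma~\ref{l-XY}(4),(5) and $T^-\cdot f(E^-_i)=S^-\cdot E^-_i=-\psi^*A\cdot E_i$, and then (2) is deduced from (1) and Lemma~\ref{l-Y-discrep}(2) by crepant descent of discrepancies, exactly as in the paper (your worry about $c\ge 0$ is harmless, since for $c<0$ plt-ness follows from the $\beta=0$ case of Lemma~\ref{l-Y-discrep}(2) by monotonicity, and $c<1$ is automatic from ampleness of $A$).
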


\begin{proof}
We show (1). 
Let $b$ be the rational number satisfying 
\[
K_Y+E^Y_i+bT^-=g^*(K_Z+E^Z_i).
\]
By (4) and (5) of Lemma \ref{l-XY}, we have  
\[
K_Y \cdot f(E^{-}_i)=\frac{m_{\Gamma}-1}{m_{\Gamma}}+\psi^*A \cdot E_i+\frac{-\Gamma^2-2m_{\Gamma}}{-\Gamma^2 \cdot m_{\Gamma}}-1
\]
and 
\[
E^Y_i \cdot f(E^{-}_i)=\frac{1}{2d-4}.
\]
Since
\[
T^- \cdot f(E^-_i)=f^*(T^-) \cdot E^-_i=S^- \cdot E^-_i=-\psi^*A \cdot E_i,
\]
it follows that 
\[\begin{aligned}
b&=\frac{\frac{m_{\Gamma}-1}{m_{\Gamma}}+\psi^*A \cdot E_i+\frac{-\Gamma^2-2m_{\Gamma}}{-\Gamma^2 \cdot m_{\Gamma}}-1+\frac{1}{2d-4}}{\psi^*A \cdot E_i}\\
&=\frac{\psi^*A \cdot E_i-\frac{1}{2d-4}}{\psi^*A \cdot E_i}.
\end{aligned}
\]
Thus, (1) holds. (2) follows from (1) and Lemma~\ref{l-Y-discrep}. 
\end{proof}

\subsection{Some exact sequences}

\subsubsection{Cokernel of $\MO_X(-(n+1)S^-) \to \MO_X(-nS^-)$}

\begin{prop}\label{p-exact-X}
For any integer $n$, we have the exact sequence: 
\[
0 \to \MO_X(-(n+1)S^-) \to \MO_X(-nS^-) \to j_*\MO_S(\llcorner n\psi^*A\lrcorner) \to 0
\]
where $j\colon S \xrightarrow{\simeq} S^- \hookrightarrow X$ is the induced morphism. 
\end{prop}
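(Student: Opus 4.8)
The injectivity of the left-hand map is immediate: since $S^-$ is effective we have $-(n+1)S^-\le -nS^-$ as $\Z$-divisors, so $\MO_X(-(n+1)S^-)\subseteq \MO_X(-nS^-)$ as subsheaves of the constant sheaf $K(X)$, and the quotient $\mathcal Q_n$ is supported on $S^-$. Because $X$ is normal (indeed klt by Lemma~\ref{l-X-plt}), the sheaves $\MO_X(-nS^-)$ are divisorial, and the formation of $\mathcal Q_n$ is local on $S$. The whole content of the statement is therefore the identification $\mathcal Q_n\cong j_*\MO_S(\llcorner n\psi^*A\lrcorner)$; proving this simultaneously gives right exactness. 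I would first observe that away from $\Ex(\psi)$ the morphism $\pi$ is the $\mathbb P^1$-bundle $\mathbb P_S(\MO_S\oplus\MO_S(\psi^*A))$ and $S^-$ is the Cartier negative section, with conormal bundle $\MO_X(-S^-)|_{S^-}\cong\MO_S(\psi^*A)$ by Lemma~\ref{l-normal-bdl}. Over $S\setminus\Ex(\psi)$ the cokernel $\mathcal Q_n$ is then computed by the usual adjunction sequence and equals $j_*\MO_S(n\psi^*A)$, where $\psi^*A$ is a genuine $\Z$-divisor.

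The essential case is a neighbourhood of a curve $C\subset\Ex(\psi)$, which I would treat in the chart of \eqref{n-zariski-local}. There $X^0=\Proj_R R[x,y,z]/(y^{m_C}-fz)$, with $C=V(f)$ and $S^-=\{y=z=0\}$, and since $S^-\cap X^0\subset D_+(x)$ it suffices to work on $D_+(x)=\Spec B$, where $B=R[Y,Z]/(Y^{m_C}-fZ)$ and $S^-=V(Y,Z)$. The key structural facts are that $B$ is free over $R$ on the monomials $\{Y^iZ^j:0\le i\le m_C-1,\ j\ge 0\}$, and that along the generic point of $S^-$ (where $f$ is a unit and $Z=f^{-1}Y^{m_C}$) one has $\mathrm{ord}_{S^-}(Y)=1$ and $\mathrm{ord}_{S^-}(Z)=m_C$, so that $\mathrm{ord}_{S^-}(Y^iZ^j)=i+m_Cj$. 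Since every integer is represented uniquely as $i+m_Cj$ with $0\le i\le m_C-1$, these orders are pairwise distinct and no cancellation occurs; hence
\[
\Gamma(X^0,\MO_X(-nS^-))=\{g\in B:\mathrm{ord}_{S^-}(g)\ge n\}=\bigoplus_{i+m_Cj\ge n}R\,Y^iZ^j,
\]
and consequently $\Gamma(X^0,\mathcal Q_n)\cong R\cdot Y^{i(n)}Z^{j(n)}\cong R$, where $n=i(n)+m_Cj(n)$ with $0\le i(n)\le m_C-1$ and $j(n)=\llcorner \frac{n}{m_C}\lrcorner$. This shows $\mathcal Q_n$ is $j_*$ of a line bundle $\mathcal L_n$ on $S$, the class of a section being detected by the coefficient of its unique monomial of order exactly $n$. (The case $n=0$ recovers $\MO_X(-S^-)=\mathcal I_{S^-}$, so the convention $\MO_S(D)=\MO_S(\llcorner D\lrcorner)$ is already visible.)

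It remains to identify $\mathcal L_n$ with $\MO_S(\llcorner n\psi^*A\lrcorner)$. Both are line bundles on the smooth surface $S$ which, by the bundle computation, are isomorphic on $S\setminus\Ex(\psi)$ to $\MO_S(n\psi^*A)$, so their ratio is $\MO_S(\sum_C a_CC)$ for integers $a_C$ supported on $\Ex(\psi)$, and it suffices to determine $a_C$ at the generic point of each $C$. The point is that, relative to the trivialisation coming from the bundle locus, the generator $Y^{i(n)}Z^{j(n)}$ of $\mathcal L_n$ equals $f^{-j(n)}Y^n$ via $Z=f^{-1}Y^{m_C}$, so $\mathcal L_n$ carries exactly the pole order $j(n)=\llcorner \frac{n}{m_C}\lrcorner$ along $C$. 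By Assumption~\ref{a-A} we have $\psi^*A=B+\sum_C\frac{1}{m_C}C$, so the fractional part contributes precisely the coefficient $\llcorner \frac{n}{m_C}\lrcorner$ of $C$ in $\llcorner n\psi^*A\lrcorner$; this forces $a_C=0$ and yields $\mathcal L_n\cong\MO_S(\llcorner n\psi^*A\lrcorner)$. I expect this last identification to be the main obstacle: because $S^-$ fails to be Cartier along the $A_{m_C-1}$-points $C^-$, the cokernel is not a naive adjunction restriction but a jet computation, and one must carefully match the explicit local trivialisation $Y^{i(n)}Z^{j(n)}$ of $\mathcal L_n$ with that of $\MO_S(\llcorner n\psi^*A\lrcorner)$ across each exceptional curve. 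The numerical coincidence $j(n)=\llcorner \frac{n}{m_C}\lrcorner$ is exactly what makes the floor appear and what makes the two sheaves agree.
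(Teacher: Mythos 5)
Your proposal is correct in substance but follows a genuinely different route from the paper. The paper never works directly on $X$: it pulls the problem back to the resolution $\mu\colon\widetilde X\to X$, where $\widetilde S^-$ is Cartier, twists the adjunction sequence by $\ulcorner-(n+1)\mu^*S^-\urcorner$, and pushes forward, checking (i) the two pushforwards give $\MO_X(-(n+1)S^-)$ and $\MO_X(-nS^-)$ via elementary rounding inequalities and the explicit formula \mbox{(5) of Lemma~\ref{l-resolution}}, (ii) the restriction to $\widetilde S^-$ is $\MO_S(\llcorner n\psi^*A\lrcorner)$ by pinning the unknown integers $x_{n,C}$ through intersection numbers with $\widetilde C^-$, and (iii) $R^1\mu_*$ of the kernel vanishes by the relative Kawamata--Viehweg machinery (Lemma~\ref{l-tildeX-kvv}, resting on Proposition~\ref{p-birat-kvv}). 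Your argument instead computes everything on $X$ itself in the chart of (\ref{n-zariski-local}), using the monomial basis of $B=R[Y,Z]/(Y^{m_C}-fZ)$ and the valuation $\mathrm{ord}_{S^-}(Y^iZ^j)=i+m_Cj$; the unique representation $n=i(n)+m_Cj(n)$ then produces the cokernel and explains intrinsically why the floor $\llcorner n/m_C\lrcorner$ appears. Your normalisation of the twist $a_C$ by transition functions ($Y^{i(n)}Z^{j(n)}=f^{-j(n)}Y^n$) replaces the paper's intersection-theoretic computation. What each buys: your proof is elementary and self-contained, needing no resolution and no vanishing theorem; the paper's proof reuses lemmas it needs elsewhere anyway and avoids delicate hands-on algebra with divisorial sheaves on the singular $X$.

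Two concrete slips need repair, though both are local to your framework. First, the displayed identity should be over $D_+(x)$, not $X^0$: the monomials $Y^iZ^j$ have poles along $S^+=\{x=0\}\subset X^0$ (e.g.\ $\mathrm{div}_{X^0}(Y)=S^-+R_C-S^+$), so they are sections only on the chart $D_+(x)$; since the cokernel is supported on $S^-\subset D_+(x)$ this costs nothing, but as written the equality of global sections over $X^0$ is false. Second, and more seriously, your formula fails for $n<0$, whereas the Proposition asserts the sequence for \emph{every} integer $n$: for $n\le 0$ your set $\{g\in B:\mathrm{ord}_{S^-}(g)\ge n\}$ is all of $B$, so for instance at $n=-1$ your recipe outputs a zero cokernel rather than a line bundle. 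The fix is to observe that $D_+(x)\setminus S^-$ is the locus $Z\neq 0$, so $\Gamma(D_+(x),\MO_X(-nS^-))$ sits inside $B_Z=\bigoplus_{0\le i\le m_C-1,\,j\in\Z}R\,Y^iZ^j$ and equals the span of the monomials with $i+m_Cj\ge n$, now allowing $j<0$; the same unique-representation argument then yields the generator $Y^{i(n)}Z^{j(n)}$ with $j(n)=\llcorner n/m_C\lrcorner$ possibly negative, and the rest of your argument goes through verbatim. Finally, in the last identification you should make explicit that the integral part $B$ of $\psi^*A$ in Assumption~\ref{a-A} may itself contain $C$ with some coefficient $b_C\in\Z$, contributing $nb_C$ to the coefficient of $C$ in $\llcorner n\psi^*A\lrcorner$; this term is silently absorbed by the local trivialisation chosen in (\ref{n-zariski-local}), and one must use the same trivialisation on both sides when concluding $a_C=0$. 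With these repairs your proof is complete and, in the authors' opinion implicit in the paper, arguably more transparent about where the floor function comes from.
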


\begin{proof}
Recall that $\widetilde {S}^-$ denotes the strict transform of $S^{-}$ in $\widetilde X$. Let $\widetilde j\colon \widetilde {S}^-\hookrightarrow \widetilde X$ be the induced morphism and 
consider the exact sequence 
\[
\begin{aligned}
0 \to \MO_{\widetilde X}(\ulcorner-(n+1)\mu^*S^-\urcorner)
&\to \MO_{\widetilde X}(\ulcorner-(n+1)\mu^*S^-\urcorner+\widetilde {S}^-) \\
&\to {\widetilde j}_*\MO_{\widetilde S^-}(\ulcorner-(n+1)\mu^*S^-\urcorner+\widetilde {S}^-) \to 0.
\end{aligned}
\]
It is enough to show the following: 
\begin{enumerate}
\item $\mu_*\MO_{\widetilde X}(\ulcorner-(n+1)\mu^*S^-\urcorner)=\MO_X(-(n+1)S^-)$, 
\item $\mu_*\MO_{\widetilde X}(\ulcorner-(n+1)\mu^*S^-\urcorner+\widetilde {S}^-)=\MO_X(-nS^-)$, 
\item $\MO_{\widetilde S^-}(\ulcorner-(n+1)\mu^*S^-\urcorner+\widetilde {S}^-) \simeq \MO_S(\llcorner n\psi^*A\lrcorner)$, and 
\item $R^1\mu_*\MO_{\widetilde X}(\ulcorner-(n+1)\mu^*S^-\urcorner)=0$. 
\end{enumerate}
(1) is clear. 
We now show (2). It is enough to prove the following  
{\small 
\begin{equation}\label{e-X-exact}
\MO_{\widetilde X}(\llcorner-n\mu^*S^-\lrcorner)
\subset 
\MO_{\widetilde X}(\ulcorner-(n+1)\mu^*S^-\urcorner+\widetilde {S}^-)
\subset 
\MO_{\widetilde X}(\ulcorner-n\mu^*S^-\urcorner).
\end{equation}
}
By (5) of Lemma~\ref{l-resolution}, we have that 
{\small 
\begin{equation}\label{e-X-exact2}
\mu^*S^-=\widetilde {S}^-+\sum_{C \subset \Ex(\psi)}(\frac{m_C-1}{m_C}F^-_{1}+\frac{m_C-2}{m_C}F^-_{2}+\cdots
+\frac{1}{m_C}F^-_{m_C-1}).
\end{equation}}
Then (\ref{e-X-exact}) follows from the fact that for any positive integers $m$ and $j$ with $1\leq j \leq m-1$, 
the following inequalities 
\[
\llcorner-n\frac{j}{m}\lrcorner \leq \ulcorner -(n+1)\frac{j}{m}\urcorner 
\leq \ulcorner -n\frac{j}{m}\urcorner
\]
hold. Thus, (2) holds. 

We now show (3). 
We may write 
\[
\MO_{\widetilde {S}^-}(\ulcorner-(n+1)\mu^*S^-\urcorner+\widetilde {S}^-) \simeq \MO_S(\llcorner n\psi^*A\lrcorner+\sum_{C \subset \Ex(\psi)}x_{n, C}C)
\]
for some integers $x_{n, C}$. 
Fix $C \subset \Ex(\psi)$. 
Let $\widetilde C^-$ be the curve corresponding to $C$ in $\widetilde S^-$. 
It suffices to show that $x_{n, C}=0$ for any integer $n$. 
Since $x_{n, C}$ is uniquely determined by taking the intersection number with $\widetilde C^-$, 
we may assume that $0 \leq n \leq m_C-1$. 
By (\ref{e-X-exact2}), we have  
\begin{eqnarray*}
&&(\ulcorner -(n+1)\mu^*S^-\urcorner+\widetilde {S}^-) \cdot \widetilde C^-\\
&=&(-(n+1)\mu^*S^-+\widetilde {S}^-+\frac{m_C-n-1}{m_C}F^-_1) \cdot \widetilde C^-\\
&=&(-n\mu^*S^--\frac{m_C-1}{m_C}F^-_1+\frac{m_C-n-1}{m_C}F^-_1) \cdot \widetilde C^-\\
&=&(-n\mu^*S^--\frac{n}{m_C}F^-_1) \cdot \widetilde C^-\\
&=&(n\psi^*A-\frac{n}{m_C}C) \cdot C\\
&=&\llcorner n\psi^*A \lrcorner \cdot C,
\end{eqnarray*}
where the second last equality holds by Lemma \ref{l-normal-bdl} and the 
last equality follows from  Assumption \ref{a-A}.
Thus, $x_{n, C}=0$ and (3) holds. 

(3) of Lemma \ref{l-resolution} and
(1) of Lemma~\ref{l-tildeX-kvv}   imply (4).
\end{proof}

\subsubsection{Exact sequences on $Y$}

\begin{prop}\label{p-Y-cokernel}
For any integer $n$, we have the exact sequence: 
\[
0 \to \MO_Y(-(n+1)T^-) \to \MO_Y(-nT^-) \to j'_*\MO_T(nA) \to 0
\]
where $j'\colon T \xrightarrow{\simeq} T^- \hookrightarrow Y$.
\end{prop}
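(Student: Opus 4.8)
The plan is to push forward the exact sequence on $X$ from Proposition~\ref{p-exact-X} along the birational contraction $f\colon X \to Y$ and to identify each term with the corresponding object on $Y$. Recall that $f$ contracts exactly the ``other'' ruling of each $R_C \simeq \mathbb P^1 \times \mathbb P^1$ and that $f_*S^- = T^-$ under the identification $T^- \simeq T$ furnished by Lemma~\ref{l-T-normal}. Thus the morphism $j'\colon T \xrightarrow{\simeq} T^- \hookrightarrow Y$ is simply $f \circ j$ up to the identifications, and the sheaf $j'_*\MO_T(nA)$ is the natural candidate for the cokernel on $Y$.

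First I would apply $f_*$ to the short exact sequence of Proposition~\ref{p-exact-X}, yielding the long exact sequence
\[
0 \to f_*\MO_X(-(n+1)S^-) \to f_*\MO_X(-nS^-) \to f_*(j_*\MO_S(\llcorner n\psi^*A\lrcorner)) \to R^1f_*\MO_X(-(n+1)S^-).
\]
The first two terms are handled by the projection-type identity $f_*\MO_X(-mS^-) = \MO_Y(-mT^-)$, which follows since $f_*\MO_X = \MO_Y$ (so $\MO_Y(-mT^-)$ is reflexive and agrees with the pushforward away from the contracted locus of codimension at least two). For the third term, since $j = $ inclusion of $S^-$ and $f|_{S^-}\colon S^- \to T^-$ is an isomorphism onto $T^-$ identifying $S$ with $T$ and $\MO_S(\llcorner n\psi^*A\lrcorner)$ with $\MO_T(nA)$ (by the very definition of $\psi^*A$ and the equality $\psi_*\MO_S(\llcorner \psi^*A\lrcorner) = \MO_T(A)$ used throughout Section~\ref{s-KM-surface}), we get $f_*(j_*\MO_S(\llcorner n\psi^*A\lrcorner)) \simeq j'_*\MO_T(nA)$.

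The main obstacle, and the step requiring the vanishing machinery, is establishing
\[
R^1f_*\MO_X(-(n+1)S^-) = 0,
\]
which guarantees right-exactness, i.e. surjectivity onto $j'_*\MO_T(nA)$. The plan here is to apply Lemma~\ref{l-f-kvv}: I would write $-(n+1)S^- = K_X + (D - K_X)$ and exhibit an $f$-exceptional effective $\Q$-divisor $\Delta$ with $(X,\Delta)$ klt (supplied by Lemma~\ref{l-f-log-fano}) such that $-(n+1)S^- - (K_X + \Delta)$ is $f$-nef. By Lemma~\ref{l-RC}(5), $S^-$ is $f$-numerically trivial (since $S^- \cdot C^- = 0$ and the $f$-contracted curves all lie in the $R_C$), so $-(n+1)S^- \equiv_f 0$; hence the relevant $\Q$-divisor differs from $-(K_X+\Delta)$ by an $f$-trivial class, and Lemma~\ref{l-f-log-fano} gives $f$-nefness of $-(K_X+\Delta)$. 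Applying Lemma~\ref{l-f-kvv} then yields the desired vanishing. Exactness on the left follows formally once the left-hand terms are identified, so the proposition will follow by combining these identifications with the vanishing.
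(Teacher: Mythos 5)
Your outline coincides with the paper's proof: apply $f_*$ to the sequence of Proposition~\ref{p-exact-X}, identify the first two pushforwards with $\MO_Y(-(n+1)T^-)$ and $\MO_Y(-nT^-)$, identify the third term with $j'_*\MO_T(nA)$, and obtain right-exactness from $R^1f_*\MO_X(-(n+1)S^-)=0$ via Lemma~\ref{l-f-kvv} and Lemma~\ref{l-f-log-fano}, using that $S^-$ is $f$-numerically trivial by (5) of Lemma~\ref{l-RC}. That is exactly the paper's argument. However, two of your local justifications are wrong as written, though both are repairable with facts you already have in hand. First, $f|_{S^-}\colon S^-\to T^-$ is \emph{not} an isomorphism: under the identifications $S^-\simeq S$ and $T^-\simeq T$ (Lemma~\ref{l-T-normal}) it is the birational contraction $\psi$, which contracts the curves $C^-$ for $C\subset \Ex(\psi)$ (indeed $\rho(S)=\rho(T)+2d+1$, so $S\not\simeq T$). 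What makes your step work anyway is the commutativity $f\circ j=j'\circ\psi$ together with the pushforward identity you quote parenthetically: $f_*j_*\MO_S(\llcorner n\psi^*A\lrcorner)=j'_*\psi_*\MO_S(\llcorner n\psi^*A\lrcorner)=j'_*\MO_T(nA)$, which is precisely the paper's computation.

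Second, your justification of $f_*\MO_X(-mS^-)=\MO_Y(-mT^-)$ by ``agreement in codimension two plus reflexivity of the target'' is insufficient: since $\Ex(f)=\bigcup_C R_C$ is a \emph{divisor} in $X$, sections over $f^{-1}(U)\setminus \Ex(f)$ need not extend to $f^{-1}(U)$, and the pushforward of a divisorial sheaf need not be reflexive. (For the blow-up $f$ of a smooth curve $Z$ in a smooth threefold with exceptional divisor $E$, the sheaf $f_*\MO_X(-E)$ is the ideal sheaf of $Z$, which agrees with $\MO_Y$ in codimension two but is not reflexive; so your criterion would give the wrong answer there.) The correct reason, which is what the paper's one-line justification encodes, is again the $f$-numerical triviality of $S^-$: given $\varphi$ with $(\mathrm{div}_Y(\varphi)-mT^-)|_U\geq 0$, the divisor $G:=\mathrm{div}_X(\varphi)-mS^-$ on $f^{-1}(U)$ satisfies $G\equiv_f 0$ and $f_*G\geq 0$, so $G\geq 0$ by the negativity lemma, giving the reverse inclusion $\MO_Y(-mT^-)\subseteq f_*\MO_X(-mS^-)$. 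You invoke $S^-\equiv_f 0$ only for the vanishing step; it is needed for the term identifications as well. With these two repairs your argument is complete and identical to the paper's.
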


\begin{proof}
By  Proposition~\ref{p-exact-X}, we have the exact sequence 
\[
0 \to \MO_X(-(n+1)S^-) \to \MO_X(-nS^-) \to j_*\MO_S(\llcorner n\psi^*A\lrcorner) \to 0.
\]
We apply $f_*$. 
By (5) of Lemma \ref{l-RC}, we have that  $S^-$ is $f$-numerically trivial. Thus,  we have  
\[
f_*\MO_X(-(n+1)S^-)=\MO_Y(-(n+1)T^-),\quad f_*\MO_X(-nS^-)=\MO_Y(-nT^-)
\]
and, Lemma~\ref{l-f-kvv} and Lemma~\ref{l-f-log-fano} imply 
\[
R^1f_*\MO_X(-(n+1)S^-)=0.
\]
We have
\[
f_*j_*\MO_S(\llcorner n\psi^*A\lrcorner)
=j'_*\psi_*\MO_S(\llcorner n\psi^*A\lrcorner)=j'_*\MO_T(nA).\]
Thus, the claim follows.
\end{proof}

\begin{prop}\label{p-Y-cokernel2}
For any integers $n$ and $j$ such that $1 \leq j \leq d$, 
 we have the exact sequence: 
\[
0 \to \MO_Y(-(n+1)T^--E^Y_j) \to \MO_Y(-nT^--E^Y_j) \to j'_*\MO_T(nA-E^T_j) \to 0
\]
where $j'\colon T \xrightarrow{\simeq} T^- \hookrightarrow Y$. 
\end{prop}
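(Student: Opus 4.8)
The plan is to run the proof of Proposition~\ref{p-Y-cokernel} after twisting by the invertible sheaf $\MO_X(-E^X_j)$. Since $E^X_j=\pi^*E_j$ and $E_j$ is Cartier on the smooth surface $S$, the sheaf $\MO_X(-E^X_j)$ is a line bundle, so tensoring the exact sequence of Proposition~\ref{p-exact-X} with it preserves exactness. Because $\pi\circ j=\mathrm{id}_S$ we have $j^*\MO_X(E^X_j)\simeq\MO_S(E_j)$, and the twisted sequence on $X$ reads
\[
0 \to \MO_X(-(n+1)S^--E^X_j) \to \MO_X(-nS^--E^X_j) \to j_*\MO_S(\llcorner n\psi^*A\lrcorner-E_j) \to 0.
\]
I would then apply $f_*$ and identify the three terms exactly as in Proposition~\ref{p-Y-cokernel}.

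For the first two terms, set $E':=f^*E^Y_j-E^X_j$, which is effective and $f$-exceptional by the computation of $f^*E^Y_j$ in Lemma~\ref{l-Y-discrep}. Since $S^-$ is $f$-numerically trivial (Lemma~\ref{l-RC}(5)) one has $f^*T^-=S^-$, hence $-kS^--E^X_j=f^*(-kT^--E^Y_j)+E'$ for $k=n,n+1$; the standard identity $f_*\MO_X(f^*D+E')=\MO_Y(D)$ for an effective $f$-exceptional $E'$ then gives $f_*\MO_X(-kS^--E^X_j)=\MO_Y(-kT^--E^Y_j)$. For the quotient, $f\circ j=j'\circ\psi$ yields $f_*j_*\MO_S(\llcorner n\psi^*A\lrcorner-E_j)=j'_*\psi_*\MO_S(\llcorner n\psi^*A-E_j\lrcorner)$; writing $n\psi^*A-E_j=\psi^*(nA-E^T_j)+E''$ with $E'':=\psi^*E^T_j-E_j$ effective, $\psi$-exceptional and with all coefficients $<1$, the rounding identity $\psi_*\MO_S(\llcorner\psi^*D+E''\lrcorner)=\MO_T(D)$ (the same fact that gives $\psi_*\MO_S(\llcorner n\psi^*A\lrcorner)=\MO_T(nA)$) identifies it with $j'_*\MO_T(nA-E^T_j)$.

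The main obstacle is the vanishing $R^1f_*\MO_X(-(n+1)S^--E^X_j)=0$, which is exactly what makes the pushed-forward sequence right exact. In Proposition~\ref{p-Y-cokernel} the divisor $-(n+1)S^-$ was $f$-numerically trivial, so Lemma~\ref{l-f-kvv} applied at once; here the extra term $-E^X_j$ is $f$-negative, since for an $f$-exceptional fibre $\zeta\subset R_C$ the projection $\pi$ maps $\zeta$ isomorphically onto $C$, whence $E^X_j\cdot\zeta=E_j\cdot C$. To apply Lemma~\ref{l-f-kvv} I would take $\Delta:=\alpha\sum_{C\subset\Ex(\psi)}R_C$ with $\alpha<1$ close to $1$, which is klt by Lemma~\ref{l-X-plt}. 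By adjunction along the $\mathbb P^1$-bundle $R_C\to C$ one has $(K_X+\sum_C R_C)\cdot\zeta=-2$, so $-(K_X+\Delta)\cdot\zeta\to 2$ as $\alpha\to 1$, and therefore
\[
\bigl(-(n+1)S^--E^X_j-(K_X+\Delta)\bigr)\cdot\zeta \;=\; -E_j\cdot C-(K_X+\Delta)\cdot\zeta \;\longrightarrow\; 2-E_j\cdot C\;\geq\;1
\]
as $\alpha\to 1$. The decisive input here is $E_j\cdot C\leq 1$ for every $C\subset\Ex(\psi)$, which holds because $E_j$ meets each of $\Gamma,\ell_j,\ell'_j$ in a single point and is disjoint from the other exceptional curves. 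Hence for $\alpha$ sufficiently close to $1$ the divisor $-(n+1)S^--E^X_j-(K_X+\Delta)$ is $f$-nef, Lemma~\ref{l-f-kvv} gives the required vanishing, and combining this with the two identifications above produces the claimed exact sequence.
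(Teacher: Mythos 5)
Your proposal is correct and follows essentially the same route as the paper: twist the sequence of Proposition~\ref{p-exact-X} by the Cartier divisor $-E^X_j$, push forward by $f$, identify the outer terms and the cokernel via standard rounding/pushforward identities, and obtain the key vanishing $R^1f_*\MO_X(-(n+1)S^--E^X_j)=0$ from Lemma~\ref{l-f-kvv} with a klt boundary $\sum q_C R_C$ ($q_C<1$) supported on $\Ex(f)$. The only (cosmetic) difference is that you verify the required $f$-nefness by intersecting directly with the contracted fibres $\zeta\subset R_C$, using $(K_X+\sum_C R_C)\cdot\zeta=-2$ and $E_j\cdot C\leq 1$, whereas the paper derives it from the relative numerical equivalence given by Lemma~\ref{l-Y-discrep}(1) together with the pullback formula for $f^*E^Y_j$ — the same computation organized differently.
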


\begin{proof}
Since $E^X_j$ is Cartier, by  Proposition~\ref{p-exact-X}, we have the exact sequence 
\[
0 \to \MO_X(-(n+1)S^--E^X_j) \to \MO_X(-nS^--E^X_j) \to j_*\MO_S(\llcorner n\psi^*A\lrcorner-E_j) \to 0.
\]
We apply $f_*$. By (5) of Lemma \ref{l-RC}, we have that  $S^-$ is $f$-numerically trivial. 
Thus, $mS^-+E^X_j$ is $f$-nef for any integer $m$ and, in particular,
\[
f_*\MO_X(-mS^--E^X_j)=\MO_Y(-mT^--E^Y_j).
\]
Thus, it is enough  to prove the following
\begin{enumerate}
\item[(i)] $\psi_*\MO_S(\llcorner n\psi^*A\lrcorner-E_j)=\MO_T(nA-E^T_j)$. 
\item[(ii)] $R^1f_*\MO_X(-(n+1)S^--E^X_j)=0$. 
\end{enumerate}
We first show (i). 
Since 
\[
\psi^*E^T_j=E_j+\frac{1}{2}(\ell_j+\ell'_j)+\frac{1}{2d-4}\Gamma,
\]
we have that $-E_j=\ulcorner -\psi^*E^T_j\urcorner$. 
Since 
\[
\llcorner \psi^*(nA-E^T_j) \lrcorner \leq \llcorner n\psi^*A\lrcorner+\ulcorner -\psi^*E^T_j\urcorner \leq \ulcorner \psi^*(nA-E^T_j) \urcorner,\]
 (i) holds. 

We now show (ii). 
Since  $S^-$ is $f$-numerically trivial and,  by (1) of Lemma \ref{l-RC}, we have
\[
\begin{aligned}
f^*E^Y_j&=f^*\pi_Y^*E^T_j=\pi^*(E_j+\frac{1}{-\ell_j^2}\ell_j+\frac{1}{-\ell'^2_j}\ell'_j+\frac{1}{-\Gamma^2}\Gamma)\\
&=E^X_j+\frac{m_{\ell_j}}{-\ell_j^2}R_{\ell_j}+\frac{m_{\ell'_j}}{-\ell'^2_j}R_{\ell'_j}+\frac{m_{\Gamma}}{-\Gamma^2}R_{\Gamma},
\end{aligned}
\]
 Lemma~\ref{l-Y-discrep} implies that 
\[
-(n+1)S^--E^X_j-K_X \equiv_f 
\frac{m_{\ell_j}}{-\ell_j^2}R_{\ell_j}+\frac{m_{\ell'_j}}{-\ell'^2_j}R_{\ell'_j}+\frac{m_{\Gamma}}{-\Gamma^2}R_{\Gamma}+\sum_{C \subset \Ex(\psi)}\frac{-C^2-2m_C}{-C^2}R_C.
\]
Since each $-R_C$ is $f$-nef, 
we can find rational numbers $q_C$ such that $0 \leq q_C<1$ and 
$-(n+1)S^--E^X_j-(K_X+\sum_{C \subset \Ex(\psi)}q_CR_C)$ is $f$-nef. 
Since, by Lemma~\ref{l-X-plt},  $(X, \sum_{C \subset \Ex(\psi)}q_CR_C)$ is klt, 
  Lemma~\ref{l-f-kvv} implies (ii). 
\end{proof}

\subsection{Summary of notation}\label{ss-s-notation}

For the reader's convenience, we  now summarise some of the notation we introduced in this Section. 
Let  $\psi\colon S \to T$ be as in Subsection~\ref{ss-KM-surface}. 
We have a commutative diagram 
\[
\begin{CD}
\widetilde X @>\mu >>X @>f>> Y @>g>> Z\\
@. @VV\pi V @VV\pi_Y V\\
@. S @>\psi>> T,
\end{CD}
\]
where the upper maps are  birational morphisms of threefolds. 
\begin{itemize}
\item $S^{\pm}$ are sections of $\pi$ (cf. (\ref{n-negative}), (\ref{n-positive})). $T^{\pm}=f_*S^{\pm}$. 
\item $R_C=\pi^{-1}(C)_{\red}$ for any curve $C \subset \Ex(\psi)$. 
\item $E^X_i=\pi^{-1}(E_i)$. $E^Y_i=f_*E^X_i$. $E^Z_i=g_*E^Y_i$. 
\item $C^{\pm}$ is a curve on $X$ set-theoretically equal to $R_C \cap S^{\pm}$. 
\item $E_i^{\pm}$ is a curve on $X$ set-theoretically equal to $E^X_i \cap S^{\pm}$. 
\item $\widetilde D$ is the proper transform on $\widetilde X$ of a prime divisor $D$ 
on $X$. 
\item $F^+$ and $F^-_i$ are $\mu$-exceptional prime divisors 
(cf. Subsection~\ref{ss-resolution}). 
\item $A$ is an ample $\Z$-divisor on $T$ satisfying Assumption~\ref{a-A}. For any curve $C\subset \Ex(\psi)$, $m_C$ is a positive integer 
introduced in Assumption~\ref{a-A}. 
\end{itemize}

\section{Examples}\label{s-examples}

The goal of this Section is to prove our main results. 
Theorem~\ref{intro-plt} and Theorem~\ref{intro-extension} 
will be proved in Subsection~\ref{ss-plt}, whilst 
Theorem~\ref{intro-non-rational} and Theorem~\ref{intro-bad-Fano}
 in Subsection~\ref{ss-bad-Fano}. 

\subsection{Non-normal plt centres}\label{ss-plt}

\begin{nota}\label{n-plt}
We use the same notation as in Subsection~\ref{ss-KM-surface} and 
Section~\ref{s-cone} (cf. Subsection~\ref{ss-s-notation}). 
We fix a positive integer $q$ which satisfies the following properties:
\begin{enumerate}
\item $q \geq  2$. 
\item $d \geq q+2$. 
\item $2d-4$ is divisible by $q-1$ (e.g. $(d, q)=(5, 3)$). 
\end{enumerate}
Let 
\[
A:=\sum_{i=1}^qE^T_i-E^T_{q+1}.
\]
We have 
\[
\begin{aligned}
\psi^*A&=\sum_{i=1}^{q}(E_i+\frac{1}{2}(\ell_i+\ell'_i))-
(E_{q+1}+\frac{1}{2}(\ell_{q+1}+\ell'_{q+1}))+\frac{q-1}{2d-4}\Gamma\\
&=(\sum_{i=1}^{q}E_i-E_{q+1}-\ell_{q+1}-\ell'_{q+1})+ \frac{1}{2}\sum_{i=1}^{q+1} (\ell_i+\ell'_i)+\frac{q-1}{2d-4}\Gamma
\end{aligned}
\]
Thus, Assumption~\ref{a-A} is satisfied. 
\end{nota}

\begin{lem}\label{l-plt-compute}
With the same notation as above, the following  hold:
\begin{enumerate}
\item $H^1(T, \MO_T(nA))=0$ for $n \geq 0$. 
\item $H^1(T, \MO_T(A-E^T_{q+2}))\neq 0$. 
\item $H^2(T, \MO_T(nA-E^T_{q+2}))=0$ for $n \geq 0$. 
\end{enumerate}
\end{lem}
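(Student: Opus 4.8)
The plan is to obtain all three statements from the precise cohomology computation of Theorem~\ref{t-KM-h1}, the vanishing of Proposition~\ref{p-eff-nef-big}, and Serre duality (Lemma~\ref{l-CM}), feeding in only the numerical input of Lemma~\ref{l_ST}: $T$ is a klt rational surface with $\rho(T)=1$, $-K_T$ is ample, and $-K_T\sim 2E^T_i$ for every $i$. In particular each $E^T_i$ is ample and numerically proportional, so $A\equiv (q-1)E^T_1$ is ample since $q\geq 2$. I will use the constraints $q\geq 2$ and $d\geq q+2$ of Notation~\ref{n-plt} to ensure all indices below are at most $d$.

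Parts (2) and (3) should be immediate. For (2), the divisor $A-E^T_{q+2}=\sum_{i=1}^q E^T_i-E^T_{q+1}-E^T_{q+2}$ is of the form considered in Theorem~\ref{t-KM-h1} with $q_1=q$ and $q_2=2$; since $q_2>0$ and $q_1-q_2=q-2\geq 0$, part (5) of that theorem gives $h^1(T,\MO_T(A-E^T_{q+2}))=q_2-1=1\neq 0$. For (3), Serre duality yields $h^2(T,\MO_T(nA-E^T_{q+2}))=h^0(T,\MO_T(K_T-nA+E^T_{q+2}))$, and substituting $K_T\sim -2E^T_{q+2}$ shows $K_T-nA+E^T_{q+2}\sim -(nA+E^T_{q+2})$, whose class is a strictly negative multiple of $E^T_1$ for all $n\geq 0$; an anti-ample divisor carries no nonzero sections, so this $h^0$ vanishes.

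For (1), I would set $D_n:=nA-K_T$ and reduce by Serre duality: $H^1(T,\MO_T(nA))\simeq H^1(T,\MO_T(K_T-nA))^{*}=H^1(T,\MO_T(-D_n))^{*}$. Since $A$ and $-K_T$ are ample, $D_n$ is ample, hence nef and big, for every $n\geq 0$. Granting that $D_n$ is also effective, Proposition~\ref{p-eff-nef-big}, applicable because $T$ is a klt rational surface, gives $H^1(T,\MO_T(-D_n))=0$ and finishes the proof.

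The one real difficulty, and the step I expect to be the main obstacle, is therefore the effectivity of $D_n=nA-K_T$. Writing $-K_T\sim 2E^T_{q+1}$ only produces the representative $\sum_{i=1}^q nE^T_i+(2-n)E^T_{q+1}$, which fails to be effective once $n\geq 3$, and the parity of $n$ blocks trading $E^T_{q+1}$ against the remaining classes through the $2$-torsion relations $2(E^T_i-E^T_j)\sim 0$. Instead I would argue numerically. Serre duality gives $h^2(T,\MO_T(D_n))=h^0(T,\MO_T(2K_T-nA))=0$, since $2K_T-nA$ is anti-ample, while a Riemann--Roch computation, performed on the minimal resolution $S$ as in the proof of Theorem~\ref{t-KM-h1} via $\chi(T,\MO_T(D_n))=\chi(S,\MO_S(\llcorner \psi^{*}D_n\lrcorner))$, shows $\chi(T,\MO_T(D_n))>0$, the positivity coming from the fact that $D_n$ and $D_n-K_T$ are both nef and big. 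Hence $h^0(T,\MO_T(D_n))\geq \chi(T,\MO_T(D_n))>0$, so $D_n$ is effective and (1) follows.
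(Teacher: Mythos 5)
Your parts (2) and (3) are correct and coincide with the paper's own argument (Theorem~\ref{t-KM-h1}(5) with $(q_1,q_2)=(q,2)$, and Serre duality plus $K_T\sim -2E^T_{q+2}$, respectively). Your part (1) also follows the paper's strategy---Serre duality plus Proposition~\ref{p-eff-nef-big} applied to $D_n=nA-K_T$---and you correctly isolate the effectivity of $D_n$ as the crux, but your treatment of that crux fails twice. First, the parity obstruction you invoke is illusory: from $2E^T_i\sim 2E^T_j$ one gets $2A\sim 2(q-1)E^T_{q+1}$ and hence $3A\sim \sum_{i=1}^{q}E^T_i+(2q-3)E^T_{q+1}$, both effective because $q\geq 2$; writing $n=2a+3b$ shows that $nA$ is effective for every $n\geq 2$, and then $D_n\sim nA+2E^T_{q+1}$ is effective for all $n\geq 0$ (for $n=1$ it is $\sum_{i=1}^{q}E^T_i+E^T_{q+1}$). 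The odd case is not blocked, because the positive part $n\sum_{i=1}^{q}E^T_i$ can absorb one extra exchange $2E^T_1\sim 2E^T_{q+1}$. This short trading argument is precisely how the paper proves (1) for $n\geq 2$, with $n=0$ handled by rationality and $n=1$ by Theorem~\ref{t-KM-h1}(5) with $q_2=1$.

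Second, the substitute you propose is invalid: $\chi(T,\MO_T(D))>0$ does \emph{not} follow from $D$ and $D-K_T$ being nef and big. That implication is a smooth-surface fact; here the computation is done on $S$ via $\chi(T,\MO_T(D))=\chi(S,\MO_S(\llcorner\psi^*D\lrcorner))$, and the round-down subtracts fractional multiples of the $\psi$-exceptional curves, whose negative self-intersections can destroy positivity. The paper itself contains a counterexample to your principle: for $q\geq 3$ the divisor $A-E^T_{q+2}=\sum_{i=1}^{q}E^T_i-E^T_{q+1}-E^T_{q+2}$ is numerically $(q-2)E^T_1$, so it and its difference with $K_T$ are both ample, yet Theorem~\ref{t-KM-h1}(3) gives $\chi=1-q_2=-1<0$. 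So your claimed inequality $\chi(T,\MO_T(D_n))>0$ would require an honest computation with the floor terms (and you would also need to verify $R^i\psi_*\MO_S(\llcorner\psi^*D_n\lrcorner)=0$ for each $n$, which you assume implicitly); neither is done, so part (1) has a genuine gap---one that the elementary effectivity argument above closes immediately.
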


\begin{proof}
We first show (1). 
If $n=0$, then (1) follows from the fact that $T$ is a rational surface. 
If $n=1$, then (1) follows from (5) of  Theorem~\ref{t-KM-h1}. 
Thus, we may assume that $n \geq 2$. 
By (3) of Lemma \ref{l_ST}, we have that  $2E^T_i \sim 2E^T_j$ for any $i,j=1,\dots,d$. Thus, 
\[
2A =2\sum_{i=1}^qE^T_i-2E^T_{q+1} \sim 2(q-1)E^T_{q+1}.
\]
In particular, it follows that 
\[
3A \sim \sum_{i=1}^qE^T_i+(2q-3)E^T_{q+1}.
\]
Therefore, $nA$ is linearly equivalent to an effective ample divisor and, by (3) of Lemma \ref{l_ST},  so is $nA-K_T$. 
Thus, Proposition~\ref{p-eff-nef-big} implies that $H^1(T, \MO_T(K_T-nA))=0$ and,  Lemma \ref{l-CM} implies  (1). 

(5) of Theorem~\ref{t-KM-h1} implies (2), whilst  Lemma~\ref{l-CM} and (3) of Lemma \ref{l_ST} imply (3). 
\end{proof}

\begin{thm}\label{t-plt-main}
With the same notation as above, $E^Z_{q+2}$ is not normal. 
\end{thm}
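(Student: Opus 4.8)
The plan follows the strategy outlined in the sketch in Subsection~\ref{ss-sketch}, now carried out on the threefold $Z$ with $E^Z := E^Z_{q+2}$ and $E^Y := E^Y_{q+2}$. Recall that $g\colon Y \to Z$ contracts $T^-$ to a point, and that $g_*\MO_Y = \MO_Z$. To prove that $E^Z$ is not normal it suffices to show that the natural restriction map
\[
\rho\colon g_*\MO_Y \to g_*\MO_{E^Y}
\]
is not surjective at the point $g(T^-)$, since $g|_{E^Y}\colon E^Y \to E^Z$ is birational and $E^Y$ (being $\pi_Y^*E^T_{q+2}$, pulled back from the normal surface $T$) is itself normal, so a failure of surjectivity of $\rho$ forces $E^Z$ to differ from its normalisation. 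The first step is therefore to write down the defining exact sequence
\[
0 \to \MO_Y(-E^Y) \to \MO_Y \to \MO_{E^Y} \to 0,
\]
apply $g_*$, and extract the four-term exact sequence
\[
g_*\MO_Y \xrightarrow{\rho} g_*\MO_{E^Y} \to R^1g_*\MO_Y(-E^Y) \to R^1g_*\MO_Y.
\]
Thus surjectivity of $\rho$ fails as soon as I establish the two cohomological claims
\[
R^1g_*\MO_Y(-E^Y) \neq 0 \qquad\text{and}\qquad R^1g_*\MO_Y = 0.
\]

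The second claim, $R^1g_*\MO_Y = 0$, should follow from the vanishing results already available for $Y$: since $-K_Y$ is $g$-nef (indeed $-K_Z$ is ample by Lemma~\ref{l-Z-Qfac}) and $Y$ has only mild singularities, the relative Kawamata--Viehweg-type vanishing packaged in Lemma~\ref{l-Y-rational} (together with $R^1$-vanishing for a resolution) gives $R^1g_*\MO_Y = 0$ directly, or else one reduces to it via the higher direct images of $\MO_W$ for a smooth model $W$. The harder and more delicate claim is $R^1g_*\MO_Y(-E^Y)\neq 0$. The plan here is to relate this sheaf to the cohomology of $T$ computed in Lemma~\ref{l-plt-compute}. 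Because $g$ contracts $T^-$ to a single point and is an isomorphism elsewhere, $R^1g_*$ of any sheaf is concentrated at $g(T^-)$ and its stalk is computed by the formal/analytic cohomology along the contracted fibre; concretely, via the theorem-on-formal-functions picture for the cone, $R^1g_*\MO_Y(-E^Y)$ is governed by the graded pieces $\bigoplus_{n\geq 1} H^1(T, \MO_T(nA - E^T_{q+2}))$, using the exact sequences of Proposition~\ref{p-Y-cokernel2} which describe the cokernels of $\MO_Y(-(n+1)T^- - E^Y) \to \MO_Y(-nT^- - E^Y)$ in terms of $\MO_T(nA - E^T_{q+2})$.

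The main obstacle, and the heart of the argument, is therefore to isolate the single nonzero contribution. Lemma~\ref{l-plt-compute}(2) supplies $H^1(T, \MO_T(A - E^T_{q+2}))\neq 0$, i.e. the graded piece at $n=1$, while one wants all higher pieces to contribute only vanishing $H^1$ or $H^2$ terms so that the $n=1$ class genuinely survives into $R^1g_*\MO_Y(-E^Y)$ rather than being killed by a connecting map. Concretely, I would run the inverse system built from Proposition~\ref{p-Y-cokernel2}: for each $n$ the sequence relates $R^1g_*\MO_Y(-nT^- - E^Y)$ to $H^1(T, \MO_T(nA - E^T_{q+2}))$ and to the next stage, and I must check that the maps in the resulting long exact sequences, controlled by the $H^0$ and $H^2$ groups, allow the nonzero $n=1$ class to persist. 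Here Lemma~\ref{l-plt-compute}(3), the vanishing $H^2(T, \MO_T(nA - E^T_{q+2}))=0$ for all $n\geq 0$, is exactly what prevents obstruction from above, playing the role of assumption~(4) in the sketch, while Lemma~\ref{l-plt-compute}(1) controls the terms without $E^T_{q+2}$. Once a nonzero class in $R^1g_*\MO_Y(-E^Y)$ is produced and $R^1g_*\MO_Y=0$ is in hand, the four-term sequence forces $\rho$ to be non-surjective, and hence $E^Z = E^Z_{q+2}$ is not normal, completing the proof.
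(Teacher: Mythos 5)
Your skeleton coincides with the paper's: you reduce non-normality of $E^Z_{q+2}$ to non-surjectivity of $g_*\MO_Y \to g_*i_*\MO_{E^Y_{q+2}}$ via the four-term sequence, and you prove $R^1g_*\MO_Y(-E^Y_{q+2})\neq 0$ exactly as the paper does, running the filtration of Proposition~\ref{p-Y-cokernel2} with the nonzero $n=1$ class from Lemma~\ref{l-plt-compute}(2), the $H^0$ vanishing at $n=0$, and Lemma~\ref{l-plt-compute}(3) plus Serre vanishing to kill the $R^2$ obstruction. The genuine gap is in your justification of the other claim, $R^1g_*\MO_Y=0$. Lemma~\ref{l-Y-rational} concerns proper birational morphisms $W\to Y$ \emph{onto} $Y$ and says nothing about the higher direct images of the contraction $g\colon Y\to Z$ \emph{from} $Y$. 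Nor is any relative Kawamata--Viehweg-type vanishing available for $g$: the exceptional divisor of $g$ is $T^-\cong T$, precisely the Keel--M\textsuperscript{c}Kernan del Pezzo surface on which KVV fails, so an appeal to ``$-K_Y$ is $g$-nef plus mild singularities'' has no theorem behind it in characteristic two. Your fallback via a smooth model is circular: by Lemma~\ref{l-Y-rational}(2), for $h\colon W\to Z$ factoring through $Y$ one has $R^1h_*\MO_W\simeq R^1g_*\MO_Y$, so vanishing of the former is equivalent to what you want, and it is not automatic for klt threefolds in characteristic two. Indeed, Theorem~\ref{t-bad-Fano} exhibits, with the \emph{same} contraction geometry ($g$ contracting $T^-$, $Z$ klt with $-K_Z$ ample) but a different choice of $A$ (Notation~\ref{n-bad-Fano}), a case where $R^1g_*\MO_Y\neq 0$, of dimension $q-1$. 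Hence $R^1g_*\MO_Y=0$ cannot follow from structural facts about $g$ alone; it depends on the specific divisor $A$.

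The correct argument is the same filtration technique you deploy for the other claim, now applied to Proposition~\ref{p-Y-cokernel}: pushing forward the sequences there gives, for each $n\geq 0$,
\[
R^1g_*\MO_Y(-(n+1)T^-) \to R^1g_*\MO_Y(-nT^-) \to H^1(T,\MO_T(nA)),
\]
and the last group vanishes for all $n\geq 0$ by Lemma~\ref{l-plt-compute}(1) (this is where that lemma is actually used, not merely to ``control terms''); so each left-hand map is surjective, and since $-T^-$ is $g$-ample, Serre vanishing gives $R^1g_*\MO_Y(-nT^-)=0$ for $n\gg 0$, hence for all $n\geq 0$, in particular for $n=0$. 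A minor point: your aside that $E^Y_{q+2}$ is normal is neither established in the paper nor needed. The implication only requires that $g|_{E^Y_{q+2}}$ is proper and birational: if $E^Z_{q+2}$ were normal, then $g_*\MO_{E^Y_{q+2}}$, being a finite birational extension of $\MO_{E^Z_{q+2}}$ inside the function field, would equal $\MO_{E^Z_{q+2}}$, and $\rho$ would be surjective as the composite of the restriction surjection $\MO_Z\to\MO_{E^Z_{q+2}}$ with this isomorphism.
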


\begin{proof}
The closed immersion  $i\colon E^Y_{q+2}\hookrightarrow Y$ induces  the exact sequence 
\[
0 \to \MO_Y(-E^Y_{q+2}) \to \MO_Y \to i_*\MO_{E^Y_{q+2}} \to 0.
\]
By applying $g_*$, we obtain the exact  sequence
\[
0 \to g_*\MO_Y(-E^Y_{q+2}) \to g_*\MO_Y \to g_*i_*\MO_{E^Y_{q+2}} 
\to R^1g_*\MO_Y(-E^Y_{q+2}) \to R^1g_*\MO_Y.
\]
If $E^Z_{q+2}$ is normal, then the natural homomorphism 
$g_*\MO_Y \to g_*i_*\MO_{E^Y_{q+2}}$ is surjective. 
Thus, it is enough to show that 
\[
g_*\MO_Y \to g_*i_*\MO_{E^Y_{q+2}}
\]
is not surjective. 
To this end, it is enough to prove the following  
\begin{enumerate}
\item $R^1g_*\MO_Y =0$. 
\item $R^1g_*\MO_Y(-E^Y_{q+2}) \neq 0$.
\end{enumerate}

We first show (1). 
For any $n \geq 0$, by Proposition~\ref{p-Y-cokernel} we have the exact sequence 
\[
0 \to \MO_Y(-(n+1)T^-) \to \MO_Y(-nT^-) \to j'_*\MO_T(nA) \to 0,
\]
which induces the exact sequence
\[
R^1g_*\MO_Y(-(n+1)T^-) \to R^1g_*\MO_Y(-nT^-) \to H^1(T, \MO_T(nA)).
\]
(1) of Lemma~\ref{l-plt-compute} implies that   $H^1(T, \MO_T(nA))=0$ for any $n\geq 0$. Thus,   Serre vanishing implies (1). 

We now show (2). 
For any $n \geq 0$, by Proposition~\ref{p-Y-cokernel2}, we have the exact sequence 
\[
0 \to \MO_Y(-(n+1)T^--E^Y_{q+2}) \to \MO_Y(-nT^--E^Y_{q+2}) \to j'_*\MO_T(nA-E^T_{q+2}) \to 0.
\]
We apply $g_*$. For $n=0$, we have  the exact sequence 
\[
0=H^0(T, \MO_T(-E^T_{q+2})) \to R^1g_*\MO_Y(-T^--E^Y_{q+2}) \to R^1g_*\MO_Y(-E^Y_{q+2}).
\]
Thus, it suffices to show that $R^1g_*\MO_Y(-T^--E^Y_{q+2}) \neq 0.$ 

For $n=1$, we have the exact sequence 
\[
R^1g_*\MO_Y(-T^--E^Y_{q+2}) \to H^1(T, \MO_T(A-E^T_{q+2})) \to R^2g_*\MO_Y(-2T^--E^Y_{q+2}).
\]
 (2) of Lemma~\ref{l-plt-compute} implies that  $H^1(T, \MO_T(A-E^T_{q+2})) \neq 0$. Thus,  
it is enough to show that $R^2\MO_Y(-2T^--E^Y_{q+2})=0$. 

(3) of Lemma~\ref{l-plt-compute} implies that $H^2(T, \MO_T(nA-E^T_{q+2}))=0$.
For any $n\ge 2$, we consider the  exact sequence 
\[
R^2g_*\MO_Y(-(n+1)T^--E^Y_{q+2}) \to R^2g_*\MO_Y(-nT^--E^Y_{q+2}) \to H^2(T, \MO_T(nA-E^T_{q+2}))=0.
\]
 Since $R^2g_*\MO_Y(-nT_--E^Y_{q+2})=0$ for any  $n \gg 0$, the claim follows. Thus, (2) holds. 
\end{proof}

\begin{rem}
By \cite[Proposition~3.11]{GNT15},
the normalisation of $E^Z_{q+2}$ in Theorem~\ref{t-plt-main} 
is a universal homeomorphism. 
\end{rem}

\begin{proof}[Proof of Theorem~\ref{intro-plt}]
Using the same notation as above, let $E=E^Z_{q+2}$. 
Then the claim follows from Lemma~\ref{l-Z-plt} and Theorem~\ref{t-plt-main}. 
\end{proof}

\begin{proof}[Proof of Theorem~\ref{intro-extension}]
We use Notation~\ref{n-plt}. Recall that $g\colon Y \to Z$ is a birational morphism such that   $\Ex(g)=T^{-}$. 
By (2) of Lemma~\ref{l-Z-plt}, the pair $(Z, E^Z_{q+2})$ is plt and
if 
\[
B=\frac {q-2}{q-1}T^-,
\]
then,  by (1) of Lemma~\ref{l-Z-plt}, 
we may write 
 \[
K_Y+E^Y_{q+2}+B=g^*(K_Z+E^Z_{q+2}).
\]
Thus, the pair $(Y, E^Y_{q+2}+B)$ is plt.

As in  the proof of Theorem~\ref{t-plt-main}, 
the induced morphism 
\[
g_*\MO_Y \to g_*i_*\MO_{E^Y_{q+2}}
\]
is not surjective. 

Let $m_0$ be a positive integer such that $m_0(K_Z+E_{q+2}^Z)$ is Cartier. 
Given $P:=g(T^-)$, let $Z'$ be an affine open subset containing $P$ such that 
\[
\MO_Z(m_0(K_Z+E_{q+2}^Z))|_{Z'} \simeq \MO_{Z'}.
\]
Let $Y'$  be the inverse image of $Z'$ and 
\[
E':=E^Y_{q+2}|_{Y'},\quad  B':=B|_{Y'}.
\]
Then the claim follows.
\end{proof}

\subsection{A klt Fano threefold $X$ with $H^2(X,\MO_X) \neq 0$}\label{ss-bad-Fano}


\begin{nota}\label{n-bad-Fano}
We use the same notation as in Subsection~\ref{ss-KM-surface} and 
Section~\ref{s-cone} (cf. Subsection~\ref{ss-s-notation}). 
We fix a positive integer $q$ and we define 
$d:=4q+2$.  Let 
\[
A:=\sum_{i=1}^{3q}E^T_i-\sum_{j=3q+1}^{4q}E^T_j.
\]
We have 
\[
\begin{aligned}
\psi^*A&=\sum_{i=1}^{3q}(E^T_i+\frac{1}{2}(\ell_i+\ell'_i))-
\sum_{j=3q+1}^{4q}(E^T_j+\frac{1}{2}(\ell_j+\ell'_j))+\frac{1}{4}\Gamma\\
&=(\sum_{i=1}^{3q}E^T_i - \sum_{j=3q+1}^{4q} (E^T_j +\ell_j+\ell'_j)) + \frac 1 2 \sum_{i=1}^{4q}(\ell_i+\ell'_i)+\frac 1 4 \Gamma. 
\end{aligned}
\]
Thus, Assumption~\ref{a-A} is satisfied. 
\end{nota}

\begin{lem}\label{l-h1-bad-Fano}
With the same notation as above, the following  hold: 
\begin{enumerate}
\item $h^1(T, \MO_T(A))=q-1$. 
\item $H^i(T, \MO_T(nA))=0$ for $i >0$ and $n \geq 2$. 
\end{enumerate}
\end{lem}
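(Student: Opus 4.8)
For part (1), the plan is to read off the value directly from Theorem~\ref{t-KM-h1}. With the divisor $A$ of Notation~\ref{n-bad-Fano} we are exactly in the situation of that theorem with $q_1=3q$ and $q_2=q$; since $q_2=q>0$ and $q_1-q_2=2q\geq 0$, we land in case (5), which gives $h^1(T,\MO_T(A))=q_2-1=q-1$. So part (1) requires no new computation.

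For part (2), I would first record the numerical behaviour of $A$. By (3) of Lemma~\ref{l_ST} we have $2E_i^T\sim 2E_j^T\sim -K_T$ for all $i,j$, so collecting the $3q$ positive and $q$ negative summands of $A$ yields $2A\sim -2qK_T$; in particular $A\equiv -qK_T$, and since $-K_T$ is ample (again Lemma~\ref{l_ST}), $A$ and every $nA$ with $n\geq 1$ is ample. The vanishing $H^2(T,\MO_T(nA))=0$ then follows from Serre duality (Lemma~\ref{l-CM}): one has $K_T-nA\equiv (nq+1)K_T$, which is numerically anti-ample, hence cannot be effective (its intersection with the ample $-K_T$ would be negative), so $H^0(T,\MO_T(K_T-nA))=0$.

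The substantive point is $H^1(T,\MO_T(nA))=0$ for $n\geq 2$. Here the plan is to write $nA\sim K_T+D$ for an effective nef and big $\Z$-divisor $D$ and then apply Proposition~\ref{p-eff-nef-big}, which yields $H^1(T,\MO_T(K_T+D))=0$. The divisor $D:=nA-K_T$ is numerically equivalent to the ample divisor $-(nq+1)K_T$, so nefness and bigness are automatic; the only thing to check is that $D$ is linearly equivalent to an effective divisor. For this I would use the explicit equivalence $2A\sim 4qE_1^T$ (from $2A\sim -2qK_T$ and $-K_T\sim 2E_1^T$) to handle even $n$, and for odd $n$ absorb the negative part $-\sum_{j=3q+1}^{4q}E_j^T$ of $A$ using $2E_1^T-E_j^T\sim E_j^T$, which again comes from $2E_1^T\sim 2E_j^T$; there are enough copies of $E_1^T$ available to cover the $q$ negative summands since the coefficient of $E_1^T$ grows like $4qm$.

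The main obstacle is precisely this last effectivity step for odd $n$: because Kawamata--Viehweg vanishing genuinely fails on $T$ (this is the content of Theorem~\ref{t-KM-h1}, and indeed of part (1)), one cannot apply any vanishing theorem to the ample divisor $nA$ directly, and must instead route through an effective representative so that Proposition~\ref{p-eff-nef-big} becomes applicable. The accompanying coefficient bookkeeping is routine once the linear equivalences above are in hand.
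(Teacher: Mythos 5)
Your proposal is correct and takes essentially the same route as the paper: part (1) is read off from case (5) of Theorem~\ref{t-KM-h1} with $(q_1,q_2)=(3q,q)$, and part (2) handles $i=2$ by Serre duality (Lemma~\ref{l-CM}) and $i=1$ by exhibiting $nA-K_T$ as an effective nef and big divisor and applying Proposition~\ref{p-eff-nef-big}. The only cosmetic difference is your even/odd absorption argument for effectivity, where the paper instead observes $H^0(T,\MO_T(2A))\neq 0$ and $H^0(T,\MO_T(3A))\neq 0$ (both via $2E^T_i\sim 2E^T_j$) and uses that every $n\geq 2$ is a sum of $2$'s and $3$'s.
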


\begin{proof} (1) follows from (5) of Theorem~\ref{t-KM-h1}. 

We now show (2). 
If $i=2$, then  Lemma~\ref{l-CM} implies (2).
Thus, we may assume that $i=1$. 
By (3) of Lemma \ref{l_ST}, we have that  $2E^T_i \sim 2E^T_j$ for any $i,j=1,\dots,d$. Thus, 
\[
H^0(T, \MO_T(2A)) \neq 0 \quad \text{and}\text \quad H^0(T, \MO_T(3A)) \neq 0.
\]
In particular, it follows that  $H^0(T, \MO_T(nA)) \neq 0$ for any $n \geq 2$. 
Thus,  $nA-K_T$ is linearly equivalent to an effective ample divisor and Proposition~\ref{p-eff-nef-big}
implies that $H^1(T, \MO_T(nA))=0$.
Thus, (2) holds. 
\end{proof}

\begin{thm}\label{t-bad-Fano}
With the same notation as above, 
$Z$ is a projective $\Q$-factorial klt threefold which satisfies the following 
properties: 
\begin{enumerate}
\item $-K_Z$ is ample and $\rho(Z)=1$, 
\item $H^1(Z, \omega_Z)=0$, 
\item $\dim_k H^2(Z, \MO_Z)=q-1$,   
\item there exists a birational morphism $h\colon W \to Z$ 
from a smooth projective threefold $W$ such that 
$\dim_k H^0(Z, R^1g_*\MO_W)=q-1$, and 
\item if $q\geq 2$, then $Z$ is not Cohen--Macaulay. 
\end{enumerate}
\end{thm}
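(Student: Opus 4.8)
The plan is to build on the entire apparatus developed in Section~\ref{s-cone}, applied to the specific divisor $A$ of Notation~\ref{n-bad-Fano}, and to transfer the cohomological computations on the surface $T$ (Lemma~\ref{l-h1-bad-Fano}) up to the cone $Z$ via the birational contractions $f\colon X \to Y$ and $g\colon Y \to Z$. The backbone is the same mechanism that drove the proof of Theorem~\ref{t-plt-main}: the exact sequences of Proposition~\ref{p-Y-cokernel} relating the sheaves $\MO_Y(-nT^-)$ for varying $n$, together with Serre vanishing to kill the tails.

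First I would dispatch (1): that $Z$ is $\Q$-factorial klt with $-K_Z$ ample and $\rho(Z)=1$ is exactly the content of Lemma~\ref{l-Z-Qfac} (kltness of $Z$ follows as in Subsection~\ref{ss-plt} from the plt statements, since here no boundary is needed). For (3), the key computation is $H^2(Z,\MO_Z)\simeq R^1g_*\MO_Y$ evaluated at the cone point, so the plan is to compute $R^1g_*\MO_Y$. Using Proposition~\ref{p-Y-cokernel}, the long exact sequences
\[
R^1g_*\MO_Y(-(n+1)T^-) \to R^1g_*\MO_Y(-nT^-) \to H^1(T,\MO_T(nA))
\]
let me assemble $H^2(Z,\MO_Z)$ from the groups $H^1(T,\MO_T(nA))$. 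By Lemma~\ref{l-h1-bad-Fano}, these vanish for $n\geq 2$ (so Serre vanishing truncates the inverse system), while $n=1$ contributes the $(q-1)$-dimensional space $H^1(T,\MO_T(A))$ and $n=0$ contributes nothing since $H^1(T,\MO_T)=0$ for a rational surface. Chasing these should yield $\dim_k H^2(Z,\MO_Z)=q-1$, giving (3); statement (4) then follows by taking $h=g\circ(\text{resolution})$ and invoking Lemma~\ref{l-Y-rational}, which guarantees the higher direct images from any smooth model of $Y$ vanish, so $R^1h_*\MO_W$ agrees with $R^1g_*\MO_Y$.

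For (2), the plan is to apply Serre duality on $Z$ together with the klt (hence the needed $S_2$/duality) structure, reducing $H^1(Z,\omega_Z)$ to a dual group that vanishes because $-K_Z$ is ample and the relevant cohomology of $\MO_Z$ sits in the range already controlled; alternatively one runs the same $R^2g_*$ analysis as in the $n\geq 2$ part of Theorem~\ref{t-plt-main}. Finally, (5) is the payoff: if $Z$ were Cohen--Macaulay, then the local cohomology at the cone point would be concentrated in top degree, forcing $H^2(Z,\MO_Z)=0$ near the vertex; but (3) shows this group has dimension $q-1>0$ when $q\geq 2$, a contradiction. I expect the main obstacle to be the bookkeeping in (3): one must check that the inverse system $\{R^1g_*\MO_Y(-nT^-)\}$ stabilises correctly and that the map out of $H^1(T,\MO_T(A))$ is injective (equivalently that the relevant $R^2g_*\MO_Y(-2T^-)$ term vanishes, via the $H^2(T,\MO_T(nA))=0$ input of Lemma~\ref{l-h1-bad-Fano}), so that no dimension is lost or spuriously gained when passing from the surface computation to the threefold.
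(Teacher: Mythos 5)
Your treatment of (1), (3) and (4) matches the paper's proof: (1) is Lemma~\ref{l-Z-Qfac} plus Lemma~\ref{l-Z-plt}; (3) and (4) follow, exactly as in the paper, from the identification $H^2(Z,\MO_Z)\simeq H^0(Z,R^1g_*\MO_Y)\simeq H^0(Z,R^1h_*\MO_W)$ (via Lemma~\ref{l-Y-rational} and Leray) together with the chase through Proposition~\ref{p-Y-cokernel}, Lemma~\ref{l-h1-bad-Fano} and Serre vanishing, and you correctly flag the needed vanishing of $R^1g_*\MO_Y(-nT^-)$ for $n\geq 2$ and of $R^2g_*\MO_Y(-2T^-)$. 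The genuine gap is item (2). Your primary route --- ``Serre duality on $Z$ together with the klt (hence $S_2$) structure'' --- is circular and in fact would prove the opposite: Serre duality in the form $H^1(Z,\omega_Z)\simeq H^2(Z,\MO_Z)^*$ is available only when $Z$ is Cohen--Macaulay (\cite[Theorem 5.71]{km98}), which is precisely what (5) denies; and if that duality did hold, (3) would give $\dim_k H^1(Z,\omega_Z)=q-1\neq 0$, contradicting what you must prove. Normality gives $S_2$, but in dimension three $S_2$ is strictly weaker than CM, so no usable duality comes for free. The paper's actual argument is different: since $Z$ is klt, one writes $K_W+E_1=h^*K_Z+E_2$ with $E_1,E_2$ effective $h$-exceptional and $\llcorner E_1\lrcorner=0$, deducing $h_*\omega_W=\omega_Z$; the Leray spectral sequence then injects $H^1(Z,\omega_Z)=H^1(Z,h_*\omega_W)$ into $H^1(W,\omega_W)$, which vanishes by Chatzistamatiou--R\"ulling \cite{CR11} because $W$ is a smooth projective rational threefold. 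Your fallback (``run the same $R^2g_*$ analysis'') is not a substitute: the exact sequences of Propositions~\ref{p-Y-cokernel} and \ref{p-Y-cokernel2} control twists of $\MO_Y$, not direct images of dualizing sheaves, and Grauert--Riemenschneider-type vanishing is not available in characteristic $p$ without further argument.

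Your argument for (5) is also incorrect as stated. Cohen--Macaulayness of $Z$ is a local condition and does not force the global group $H^2(Z,\MO_Z)$ to vanish (any smooth projective threefold with $h^2(\MO)\neq 0$, e.g.\ an abelian threefold, is CM); nor does CM at the vertex force the skyscraper $R^1g_*\MO_Y$ to vanish --- the cone over an elliptic curve is a hypersurface, hence CM, yet has nonzero $R^1$ of its resolution, so ``CM $\Rightarrow$ rational-type vanishing at the cone point'' fails. The correct mechanism, and the one the paper uses, is that (2) and (3) are proved \emph{independently}, and then (5) follows by contradiction from \cite[Theorem 5.71]{km98}: if $Z$ were CM, Serre duality would pair $H^1(Z,\omega_Z)$ with $H^2(Z,\MO_Z)$, giving $0=q-1$, absurd for $q\geq 2$. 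In your proposal, (2) leans on the duality that (5) is meant to destroy, so the two items feed on each other; repairing the proof requires the discrepancy computation $h_*\omega_W=\omega_Z$ and the appeal to \cite{CR11}, neither of which appears in your sketch.
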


\begin{proof}
By Lemma~\ref{l-Z-Qfac} and Lemma~\ref{l-Z-plt}, 
$Z$ is a projective $\Q$-factorial klt threefold which satisfies  (1). 
Let $W:=\widetilde X$ (cf. Subsection \ref{ss-resolution}) and let 
\[
h\colon W=\widetilde X \xrightarrow{\mu} X \xrightarrow{f} Y \xrightarrow{g} Z.
\]

We now show (2). 
Since $Z$ is klt, we can write $K_{W}+E_1=h^*K_Z+E_2$ 
for some effective $h$-exceptional $\Q$-divisors $E_1$ and $E_2$ 
such that $\llcorner E_1 \lrcorner =0$. 
This implies that 
\[
h_*\MO_W(K_W)=
h_*\MO_W(K_W+E_1)=
h_*\MO_W(h^*K_Z+E_2)=\MO_Z(K_Z),
\]
where the first equality follows from $\llcorner E_1 \lrcorner =0$. 
Thus, we have that 
\begin{equation}\label{e-omega}
h_*\omega_W=\omega_Z.
\end{equation}
By the Leray spectral sequence, we have an injection 
\begin{equation}\label{e-Leray-inje}
H^1(Z, h_*\omega_W) \hookrightarrow H^1(W, \omega_W).
\end{equation}
Since $W$ is a smooth rational variety, it follows that $H^1(W, \omega_W)=0$ 
by \cite[Theorem 1]{CR11}. 
Thus, (\ref{e-omega}) and (\ref{e-Leray-inje}) imply (2). 
Note that (5) follows from (2), (3) and \cite[Theorem 5.71]{km98}. 

In order to prove (3) and (4), it is enough  to show the following: 
\begin{enumerate}
\item[(i)] $R^1h_*\MO_W \simeq R^1g_*\MO_Y$. 
\item[(ii)] $H^0(Z, R^1g_*\MO_Y) \simeq H^2(Z, \MO_Z).$
\item[(iii)] $h^0(Z, R^1g_*\MO_Y)=q-1$.
\end{enumerate}
 (i) and (ii) follow from Lemma~\ref{l-Y-rational}. 

We now show (iii). 
By Proposition~\ref{p-Y-cokernel}, We have the exact sequence 
\[
0 \to \MO_Y(-(n+1)T^-) \to \MO_Y(-nT^-) \to j'_*\MO_T(nA) \to 0
\]
for any $n \geq 0$. 
By (2) of Lemma~\ref{l-h1-bad-Fano} and  Serre vanishing theorem, 
we have that $R^ig_*\MO_Y(-nT^-)=0$ for $i>0$ and $n \geq 2$. 
Thus, if $n=1$, we have 
\begin{equation}\label{e-bad-Fano}
R^1g_*\MO_Y(-T^-) \simeq H^1(T, \MO_T(A)).
\end{equation}
Let $n=0$. By Lemma \ref{l-T-normal}, it follows that 
 $H^1(T^-, \MO_{T^-})=0$ and $g_*\MO_Y \to g_*\MO_{T^-}$ is surjective. Thus, 
\begin{equation}\label{e-bad-Fano2}
R^1g_*\MO_Y(-T^-) \simeq R^1g_*\MO_Y.
\end{equation}
 By (1) of Lemma~\ref{l-h1-bad-Fano},  we have that  $\dim_k H^1(T, \MO_T(A))=q-1$. Thus, 
(\ref{e-bad-Fano}) and (\ref{e-bad-Fano2}) imply (iii). 
\end{proof}

\begin{proof}[Proof of Theorem~\ref{intro-non-rational} and 
Theorem~\ref{intro-bad-Fano}]
Both Theorems follow directly from Theorem~\ref{t-bad-Fano}. 
\end{proof}

\bibliographystyle{amsalpha}
\bibliography{Library}

\providecommand{\bysame}{\leavevmode\hbox to3em{\hrulefill}\thinspace}
\providecommand{\MR}{\relax\ifhmode\unskip\space\fi MR }
\providecommand{\MRhref}[2]{%
  \href{http://www.ams.org/mathscinet-getitem?mr=#1}{#2}
}
\providecommand{\href}[2]{#2}
\begin{thebibliography}{BCHM10}

\bibitem[BCHM10]{bchm10}
C.~Birkar, P.~Cascini, C.~D. Hacon, and J.~M\textsuperscript{c}Kernan,
  \emph{Existence of minimal models for varieties of log general type}, J.
  Amer. Math. Soc. \textbf{23} (2010), no.~2, 405--468.

\bibitem[CR11]{CR11}
A.~Chatzistamatiou and K.~R{\"u}lling, \emph{Higher direct images of the
  structure sheaf in positive characteristic}, Algebra Number Theory \textbf{5}
  (2011), no.~6, 693--775.

\bibitem[CT16]{CT16b}
P.~Cascini and H.~Tanaka, \emph{Smooth rational surfaces violating
  {K}awamata--{V}iehweg vanishing}, arXiv:1607.08542 (2016).

\bibitem[CTW16]{CTW15b}
P.~Cascini, H.~Tanaka, and J.~Witaszek, \emph{On log del {P}ezzo surfaces in
  large characteristic}, arXiv:1601.03583 (2016).

\bibitem[Das15]{Das15}
O.~Das, \emph{On strongly {$F$}-regular inversion of adjunction}, J. Algebra
  \textbf{434} (2015), 207--226.

\bibitem[DHP13]{dhp13}
J-P. Demailly, C.~D. Hacon, and M.~P\u{a}un, \emph{Extension theorems,
  non-vanishing and the existence of good minimal models}, Acta Math.
  \textbf{210} (2013), no.~2, 203--259.

\bibitem[GNT15]{GNT15}
Y.~Gongyo, Y.~Nakamura, and H.~Tanaka, \emph{Rational points on log {F}ano
  threefolds over a finite field}, arXiv1512.05003 (2015).

\bibitem[Gro61]{egaii}
A.~Grothendieck, \emph{\'{E}l\'ements de g\'eom\'etrie alg\'ebrique. {II}.
  \'{E}tude globale \'el\'ementaire de quelques classes de morphismes}, Inst.
  Hautes \'Etudes Sci. Publ. Math. (1961), no.~8.

\bibitem[Har77]{Hartshorne77}
R.~Hartshorne, \emph{Algebraic geometry}, Springer-Verlag, New York, 1977.

\bibitem[HM07]{hm07}
C.~D. Hacon and J.~M\textsuperscript{c}Kernan, \emph{Extension theorems and the
  existence of flips}, Flips for 3-folds and 4-folds (Alessio Corti, ed.),
  Oxford University Press, 2007, pp.~79--100.

\bibitem[HM10]{hm10}
\bysame, \emph{Existence of minimal models for varieties of log general type
  {II}}, J. Amer. Math. Soc. \textbf{23} (2010), no.~2, 469--490.

\bibitem[HX15]{hx13}
C.~D. Hacon and C.~Xu, \emph{On the three dimensional minimal model program in
  positive characteristic}, J. Amer. Math. Soc. \textbf{28} (2015), no.~3,
  711--744.

\bibitem[Kee99]{keel99}
S.~Keel, \emph{Basepoint freeness for nef and big line bundles in positive
  characteristic}, Ann. of Math. (2) \textbf{149} (1999), no.~1, 253--286.

\bibitem[Kee03]{keeler03}
D.~S. Keeler, \emph{Ample filters of invertible sheaves}, J. Algebra
  \textbf{259} (2003), no.~1, 243--283.

\bibitem[KM98]{km98}
J.~Koll{\'a}r and S.~Mori, \emph{Birational {G}eometry of {A}lgebraic
  {V}arieties}, Cambridge {T}racts in {M}athematics, vol. 134, Cambridge
  University Press, 1998.

\bibitem[KM99]{km99}
S.~Keel and J.~M\textsuperscript{c}Kernan, \emph{Rational curves on
  quasi-projective surfaces}, Mem. Amer. Math. Soc. \textbf{140} (1999),
  no.~669, viii+153.

\bibitem[Kol13]{kollar13}
J.~Koll{\'a}r, \emph{Singularities of the minimal model program}, Cambridge
  Tracts in Mathematics, vol. 200, Cambridge University Press, Cambridge, 2013.

\bibitem[LR97]{LR97}
N.~Lauritzen and A.~P. Rao, \emph{Elementary counterexamples to {K}odaira
  vanishing in prime characteristic}, Proc. Indian Acad. Sci. Math. Sci.
  \textbf{107} (1997), no.~1, 21--25.

\bibitem[Mad16]{maddock16}
Z.~Maddock, \emph{Regular del pezzo surfaces with irregularity}, J. Algebraic
  Geom. \textbf{25} (2016), 401--429.

\bibitem[Mat80]{matsumura80}
H.~Matsumura, \emph{Commutative algebra}, Benjamin-Cummings, 1980.

\bibitem[Mat89]{matsumura89}
\bysame, \emph{Commutative ring theory}, second ed., Cambridge Studies in
  Advanced Mathematics, vol.~8, Cambridge University Press, Cambridge, 1989,
  Translated from the Japanese by M. Reid.

\bibitem[Sch07]{schroer07}
S.~Schr{\"o}er, \emph{Weak del {P}ezzo surfaces with irregularity}, Tohoku
  Math. J. (2) \textbf{59} (2007), no.~2, 293--322.

\bibitem[Sho03]{shokurov03}
V.~V. Shokurov, \emph{Prelimiting flips}, Proc. Steklov Inst. of Math.
  \textbf{240} (2003), 82--219.

\bibitem[Tan14]{tanaka12}
H.~Tanaka, \emph{Minimal models and abundance for positive characteristic log
  surfaces}, Nagoya Math. J. \textbf{216} (2014), 1--70.

\bibitem[Tan15]{tanaka12a}
\bysame, \emph{The {$X$}-method for klt surfaces in positive characteristic},
  J. Algebraic Geom. \textbf{24} (2015), no.~4, 605--628.

\end{thebibliography}

\end{document}